\definecolor{orange}{rgb}{1,0.5,0}
\newtheorem{theorem}{Theorem}
\newtheorem{lemma}{Lemma}
\newtheorem{proposition}{Proposition}
\renewcommand{\epsilon}{\varepsilon}
\renewcommand{\phi}{\varphi}
\renewcommand{\le}{\leqslant}
\renewcommand{\ge}{\geqslant}
\newcommand{\eps}{\varepsilon}
\newcommand{\lbd}{\lambda}
\newcommand{\ds}{\, ds}
\newcommand{\dr}{\, dr}
\newcommand{\cR}{\mathcal R}
\DeclareMathOperator{\e}{e}
\DeclareMathOperator{\Exp}{exp}
   \newcommand{\N}{\ensuremath{\mathds N}}
   \newcommand{\R}{\ensuremath{\mathds R}}
\begin{document}
\title[]
   {An eco-epidemiological model with general functional response of predator to prey}
\author{Lopo F. de Jesus}
\address{L. F. de Jesus\\
 	Centro de Matemática e Aplicações (CMA-UBI), Universidade da Beira Interior\\
	and Instituto Superior de Ciências da Educação\\
	{Rua Sarmento Rodrigues, Lubango, Angola}
   }
   \email{lopo.jesus@ubi.pt}
\author{C\'esar M. Silva}
\address{C. M. Silva\\
      Centro de Matemática e Aplicações (CMA-UBI), Universidade da Beira Interior\\
      6201-001 Covilh\~a, Portugal}
\email{csilva@ubi.pt}
\author{Helder Vilarinho}
\address{H. Vilarinho\\
    Centro de Matemática e Aplicações (CMA-UBI), Universidade da Beira Interior\\
      6201-001 Covilh\~a, Portugal}
\email{helder@ubi.pt}
\date{\today}
\thanks{L. de Jesus, C. M. Silva and H. Vilarinho were partially supported by FCT through CMA-UBI (project UIDB/00212/2020). L. F. de Jesus was also supported by INAGBE.}
\keywords{Eco-epidemiological model, nonautonomous, strong persistence, extinction, functional response of predator to prey}
\begin{abstract}
We consider a nonautonomous eco-epidemiological model with general functions for predation on infected and uninfected preys as well as general functions associated to the vital dynamics of the susceptible prey and predator populations. We obtain persistence and extinction results for the infected prey based on assumptions on systems related to the dynamics in the absence of infected preys. We apply our results to eco-epidemiological models constructed from predator-prey models existent in the literature. Some illustrative simulation is undertaken.
\end{abstract}
\maketitle
\section{Introduction}
The description of the dynamics of eco-epidemiological systems is a subject that have been receiving increasing attention by the researchers interested in mathematical biology. The inclusion of infected classes in predator-prey models has shown that eco-epidemiological dynamics can show several differences to the original models. In particular, the inclusion of a disease in the preys or in the predators have impact on the population size of the predator-prey community~\cite{Hethcote-Wang-Han-Ma-TPB-2004, Koopmans-Wilbrink-Conyn-Natrop-Nat-Vennema-Lancet-2004}.

Additionally, to make models more realistic, it is important, in many situations, to consider time varying parameters. For instance, it is well known in epidemiology that incidence rates are seldom subject to periodic seasonal fluctuations. In the context of eco-epidemiological models, several periodic systems have been studied in the literature~\cite{Dobson-QRV-1988, Friend-H-2002, Koopmans-Wilbrink-Conyn-Natrop-Nat-Vennema-Lancet-2004, Krebs-Blackwell-Scientific-Publishers-1978, Niu-Zhang-Teng-AMM-2011}. In~\cite{Niu-Zhang-Teng-AMM-2011} a class of general non-autonomous eco-epidemiological models with disease in the prey, containing the periodic case as a very particular situation, is considered and threshold conditions for the extinction and persistence of the infected preys are obtained. Related to the periodic version of this model, in~\cite{Silva-JMAA-2017}, it is proved the existence of an endemic periodic orbit. In~\cite{Niu-Zhang-Teng-AMM-2011, Silva-JMAA-2017}, it is assumed that only infected preys are predated. More recently, based on this model,~\cite{Lu-Wang-Liu-DCDS-B-2018} proposed a family of models that include predation on uninfected preys described by a bilinear functional response and obtained threshold conditions for the extinction and persistence of the disease.

In the previous papers, the functional response of the predator to prey is given by some particular function. Also the vital dynamics of predator and prey is assumed to follow some particular law. In this paper we generalize the models in~\cite{Lu-Wang-Liu-DCDS-B-2018,Niu-Zhang-Teng-AMM-2011} by considering general functions corresponding to the predation on infected and uninfected prey and also to the vital dynamics of uninfected prey and predator populations. Namely, we consider the following eco-epidemiological model:
\begin{equation}\label{eq:principal}
\begin{cases}
S'=G(t,S)-a(t)f(S,I,P)P-\beta(t)SI\\
I'=\beta(t)SI-\eta(t)g(S,I,P)I-c(t)I\\
P'=h(t,P)P+\gamma(t)a(t)f(S,I,P)P+\theta(t)\eta(t)g(S,I,P)I
\end{cases},
\end{equation}
where $S$, $I$ and $P$ correspond, respectively, to the susceptible prey, infected prey and predator, $\beta(t)$ is the incidence rate of the disease, $\eta(t)$ is the predation rate of infected prey, $c(t)$ is the death rate in the infective class, $\gamma(t)$ is the
rate converting susceptible prey into predator (biomass transfer), $\theta(t)$ is the rate of converting infected prey into predator, $G(t,S)$ and $h(t,P)P$ represent the vital dynamics of the susceptible prey and predator populations, respectively, {$a(t)f(S,I,P)$} is the predation of susceptible prey and $\eta(t)g(S,I,P)$ represent the predation of infected prey. It is assumed that only susceptible preys $S$ are capable of reproducing, i.e, the infected prey is removed by death (including natural and disease-related death) or by predation before having the possibility of reproducing.

The objective of this work is to discuss the uniform strong persistence and extinction of the infectives $I$ in system~\eqref{eq:principal}. Recall that the infectives are \emph{uniformly strong persistent} in system~\eqref{eq:principal} if there exist $0<m_1<m_2$ such that for every solution $(S(t),I(t)P(t))$ of~\eqref{eq:principal} with positive initial conditions $S(t_0),I(t_0),P(t_0)>0$, we have
\[
m_1 < \liminf_{t \to \infty} I(t) \le \limsup_{t \to \infty} I(t) < m_2,
\]
and we say that the infectives $I$ go to \emph{extinction} in system~\eqref{eq:principal} if
\[
\displaystyle \lim_{t \to +\infty} I(t) = 0,
\]
for all solutions of~\eqref{eq:principal} with positive initial conditions.
For biological reasons we will only consider for system~\eqref{eq:principal} solutions with initial conditions in the set $(\R_0^+)^3$.

Our approach is very different to the one in~\cite{Niu-Zhang-Teng-AMM-2011} and~\cite{Lu-Wang-Liu-DCDS-B-2018}. In fact, we want to discuss the extinction and strong persistence of the infectives in system~\eqref{eq:principal}, having as departure point some prescribed behaviour of subsystems related to the dynamics of preys and predators in the absence of disease. We will assume that we have global asymptotic stability of solutions of these special {bi-dimensional subsystems}  (see condition S\ref{cond-11}) in Section~\ref{section:asymptotically-stable-in-prey/predator subspace}). Thus, to apply our results to specific situations in the literature, one must first verify that the underlying refered subsystems satisfy our assumptions or, looking at our results differently, we can construct an eco-epidemiological model from a previously studied predator-prey model (the uninfected subsystem) that satisfies our assumptions. We believe that this approach is interesting since it highlights the relation of the dynamics of the eco-epidemiological model with the behaviour of the predator-prey model used in its construction.

We note that, similarly to the thresholds obtained in~\cite{Lu-Wang-Liu-DCDS-B-2018}, our thresholds for extinction and uniform strong persistence are not sharp.
In spite of this, unlike the conditions for extinction and strong persistence in~\cite{Lu-Wang-Liu-DCDS-B-2018}, that rely on parameters that can not, in principle, be computed explicitly (note that conditions (22) and (43) in~\cite{Lu-Wang-Liu-DCDS-B-2018} depend on $q_1$), our thresholds can be directly obtained from the parameters and the limit behavior of the predator-(uninfected) prey subsystem.

To illustrate our findings, in Section~\ref{examples} some predator-prey models available in the literature, satisfying our assumptions, are considered and thresholds conditions for the corresponding eco-epidemiological model automatically obtained from our results: in our Example 1, we consider the situation where $f\equiv 0$ in system~\eqref{eq:principal}, corresponding to a generalized version of the situation studied in~\cite{Niu-Zhang-Teng-AMM-2011}; in Example 2, we obtain a particular form for the threshold conditions in the context of periodic models and particularize our result for a model constructed from the predator-prey model in~\cite{Goh-AN-1976}; in Example 3, we consider a model with Michaelis-Menten (or Holling-type I) functional response of predator to infected prey and a Holling-type II functional response of predator to susceptible prey; finally, in Example 4 we consider the eco-epidemiological model obtained from an uninfected subsystem with ratio-dependent functional response of predator to prey, a type interaction considered as an attempt to overcome some know biological paradoxes observed in models with Gause-type interaction and again obtain the corresponding results for the eco-epidemiological model, based on the discussion of ratio-dependent predator-prey systems in~\cite{Hsu-Hwang-Kuang-MB-2001}. For all these examples we present some simulation that illustrate our conclusions.

\section{Eco-epidemiological models with asymptotically stable behavior in the predator-uninfected prey subspace}\label{section:asymptotically-stable-in-prey/predator subspace}

We will assume the following hypothesis concerning the parameter functions and the functions $f$, $g$, $G$ and $h$ appearing in our model~\eqref{eq:principal}:
\begin{enumerate}[S$1$)]
\item \label{cond-1} The real valued functions $a$, $\beta$, $\eta$, $c$, $\gamma$ and $\theta$ are bounded, nonnegative and continuous;
\item \label{cond-3} The real valued functions $f$, $g$, $G$ and $H(t,x)=h(t,x)x$ are locally Lipschitz and functions {$f$ and $g$} are nonnegative. For fixed
    $x,z \ge 0$, functions $y\mapsto f(x,y,z)$ and $y \mapsto g(x,y,z)$ are nonincreasing; for fixed $y,z \ge 0$, function $x \mapsto g(x,y,z)$ is nonincreasing; for fixed $x,y
    \ge 0$, {function $z \mapsto f(x,y,z)$ is nonincreasing and} function $z \mapsto g(x,y,z)$ is nondecreasing;
\end{enumerate}

Our next assumption relates to the $\omega$-limit of solutions of~\eqref{eq:principal} and is usually fulfilled by mathematical models in eco-epidemiology.
\begin{enumerate}[S$1$)]
\setcounter{enumi}{2}
\item \label{cond-12} Each solution of~\eqref{eq:principal} with positive initial condition is bounded and there is a bounded region $\mathcal R$ that contains the $\omega$-limit of all solutions of~\eqref{eq:principal} with positive initial conditions.
\end{enumerate}
Notice in particular that condition~S\ref{cond-12}) implies that there is $L>0$ such that
\[
\limsup_{t \to +\infty} \, (S(t)+I(t)+P(t)) < L,
\]
for all solutions $(S(t),I(t),P(t))$ of~\eqref{eq:principal} with positive initial conditions.\\

To proceed, we need to consider two auxiliary equations and one auxiliary system. First, we consider the equation
\begin{equation}\label{eq:auxiliary-S(t)}
s'=G(t,s),
\end{equation}
that corresponds to the dynamics of uninfected preys in the absence of infected preys and predators (the first equation in system~\eqref{eq:principal} with $I = 0$ and $P = 0$). We assume the following properties for the solutions of~\eqref{eq:auxiliary-S(t)}:\\

\begin{enumerate}[S$1$)]
\setcounter{enumi}{3}
\item \label{cond-9} Each solution $s(t)$ of~\eqref{eq:auxiliary-S(t)} with positive initial condition is bounded, bounded away from zero, and globally attractive on $]0,+\infty[$, that is $|s(t)-v(t)| \to 0$ as $t \to +\infty$ for each solution $v(t)$ of~\eqref{eq:auxiliary-S(t)} with positive initial condition.
\end{enumerate}

\,

The second auxiliary equation we consider is the equation
\begin{equation}\label{eq:auxiliary-P(t)}
y'=h(t,y)y,
\end{equation}
that corresponds to the dynamics of predators in the absence of the considered preys (the third equation in system~\eqref{eq:principal} with $I=0$ and $S=0$).
We need the following property for the solutions of~\eqref{eq:auxiliary-P(t)}:\\

\begin{enumerate}[S$1$)]
\setcounter{enumi}{4}
\item \label{cond-10} Each fixed solution $y(t)$ of~\eqref{eq:auxiliary-P(t)} with positive initial condition is bounded and globally attractive on $[0,+\infty)$.
\end{enumerate}

\,

Finally, starting from the \emph{uninfected subsystem}, that is, the system that describes the behavior of preys and predators in the absence of infected preys (the first and third equations of system~\eqref{eq:principal} with $I = 0$), given by
\begin{equation}\label{eq:uninfected-system}
\begin{cases}
x'=G(t,x)-a(t)f(x,0,z)z\\
z'=h(t,z)z+\gamma(t)a(t)f(x,0,z)z
\end{cases},
\end{equation}
we assume that we are able to construct families of {auxiliary subsystems}:
\begin{equation}\label{eq:auxiliary-system-SP-1}
\begin{cases}
{x'=G_{1,\eps}(t,x)-a(t)f(x,0,0)\hat{z}_\epsilon(t)-v(\eps) \rho(t) x}\\
z'=h_{1,\eps}(t,z)z+\gamma(t)a(t)f(x,v(\eps) \rho^u,z)z
\end{cases}
\end{equation}
where  $(\hat x_\epsilon(t),\hat z_\epsilon(t))$ is a solution of
\begin{equation}\label{eq:auxiliary-system-SP-2}
\begin{cases}
{x'=G_{2,\eps}(t,x)}\\
z'=h_{2,\eps}(t,z)z+\gamma(t)a(t)f(x,0,z)z+v(\eps) \rho(t)g(x,{0},z)
\end{cases}
\end{equation}
{satisfying} the following assumptions.

\,

\begin{enumerate}[S$1$)]
\setcounter{enumi}{5}
\item \label{cond-11} The following holds for systems~\eqref{eq:auxiliary-system-SP-1} and~\eqref{eq:auxiliary-system-SP-2}:
    \begin{enumerate}
    \item[S6.1)]\label{cond-61} for sufficiently small $\eps>0$, the functions $G_{i,\eps}$ and $h_{i,\eps}$, $i=1,2$, are continuous, the functionals
    $\eps \mapsto G_{i,\eps}$ and $\eps \mapsto h_{i,\eps}$, $i=1,2$, are continuous, $G_{1,0}=G_{2,0}=G$, $h_{1,0}=h_{2,0}=h$,
    \[
    G_{1,\eps}(t,x) \le G(t,x) \le G_{2,\eps}(t,x)
    \]
    and
    \[
    \quad h_{1,\eps}(t,x) \le h(t,x) \le h_{2,\eps}(t,x);
    \]
    \item[S6.2)] the real valued function $v:[0,+\infty[ \to \R$ verifies $v(\eps)>0$ for $\eps \in \, ]0,+\infty]$, $v(0)=0$ and is differentiable near $\eps=0$ with
        \[
        A<v'(\eps)<B,
        \]
    for some $A,B>0$ and sufficiently small $\eps \ge 0$;
    \item[S6.3)]\label{cond-63} there are constants $\rho^u,\rho^\ell$ such that, for all $t\ge 0$,
        \[
        0<\rho^{\ell} \le \rho(t) \le \rho^u;
        \]
    \item[S6.4)] \label{eq:subsyst-1} there is a family of nonnegative solutions, $\{(x^*_{1,\eps}(t), \, z^*_{1,\eps}(t))\}$ of system~\eqref{eq:auxiliary-system-SP-1}, one
        solution for each $\eps\ge0$ sufficiently small, {depending on a solution $(x_{2,\epsilon}^*(t),z_{2,\epsilon}^*(t))$ of system~\eqref{eq:auxiliary-system-SP-2},} such that each solution in the family is globally asymptotically stable in a set containing the set $(\R^+)^2$ and the function
        \[
        \eps \mapsto (x^*_{1,\eps}(t), \, z^*_{1,\eps}(t)) \quad \text{is continuous;}
        \]
    \item[S6.5)]\label{cond-65} 	{the} family of nonnegative solutions $\{(x^*_{2,\eps}(t), \, z^*_{2,\eps}(t))\}$ of system~\eqref{eq:auxiliary-system-SP-2}, one solution for each
        $\eps\ge0$ sufficiently small, {is} such that each solution in the family is globally asymptotically stable in a set containing the set $(\R^+)^2$ and the function
        \[
        \eps \mapsto (x^*_{2,\eps}(t), \, z^*_{2,\eps}(t)) \quad \text{is continuous.}
        \]
    \end{enumerate}
\end{enumerate}
{We write $x^*_{1,0}=x_1^*$, $x^*_{2,0}=x^*_{2}$,  $z^*_{1,0}=z^*_{1}$ and $z^*_{2,0}=z^*_{2}$} for the components of the solutions in {S6.4) and S6.5)} corresponding to $\eps=0$. {For the continuity}  {of the functionals in S6.1), {{S6.4) and S6.5)}} we consider the usual supremum norm, $\|\cdot\|_0$ (notice that, by
S\ref{cond-12}) the solutions are bounded). Note that we only aim to control two suitable families of perturbations of the uninfected subsystem, so that
condition~S\ref{cond-11}) is sufficiently flexible to adapt to a wide range of uninfected subsystems associated to the eco-epidemiological models.


We emphasize that our setting includes several of the most common functional responses for both functions $f$ and $g$. Writing \[f(S,I,P)=\frac{kS^\alpha}{h(S,I,P)}\,\, \text{ and }\,\, g(S,I,P)=\frac{kP^\alpha}{h(S,I,P)}\]  we may consider the following cases:
\begin{enumerate}
\item[-] Holling-type I: $h(S,I,P)=1$, $\alpha=1$
\item[-]  Holling-type II : $h(S,I,P)={(1+m(S+I))}$, $\alpha=1$
\item[-]  Holling-type III: $h(S,I,P)={(1+m(S+I))}$, $\alpha>0$
\item[-] Holling-type IV: $h(S,I,P)={(a+b(S+I)+c(S+I)^2)}$, $\alpha=1$
\item[-] Beddington-De Angelis: $h(S,I,P)={(a+b(S+I)+cP)}$, $\alpha=1$
\item[-] Crowley-Martin: $h(S,I,P)={(a+b(S+I)+cP+d(S+I)P)}$, $\alpha=1$.
\end{enumerate}

\subsection{Main results}
In this subsection, we will establish our results on the extinction and uniform strong persistence of the infective prey in system~\eqref{eq:principal}, assuming conditions~S\ref{cond-1}) to S\ref{cond-11}).
Given a function $f$ we will use throughout the paper the notations $f^\ell=\inf_{t \ge 0} f(t)$, $f^u=\sup_{t \ge 0} f(t)$ and, for a $\omega$-periodic function $f$ we use the notation $\bar f= (1/\omega) \int_0^\omega f(s)\ds$.

We define
\begin{equation}\label{eq:cR0-ext}
\cR^\ell(\lambda)=\liminf_{t \to +\infty} \int_t^{t+\lambda} \beta(s){x^*_{1}}(s)-\eta(s)g({x^*_{1}}(s),0,{z^*_{2}}(s))-c(s) \ds
\end{equation}
{where we still denote by ${x^*_{1}}(t)$ and ${z^*_{2}}(t)$ the components of solutions in systems~S6.4) and~S6.5), with $\epsilon=0$, and}
\begin{equation}\label{eq:cR0-per}
\cR^u(\lambda)=\limsup_{t \to +\infty} \int_t^{t+\lambda} \beta(s)s^*(s)-\eta(s)g(s^*(s),0,y^*(s))-c(s) \ds.
\end{equation}
where $s^*(t)$ and $y^*(t)$ are particular solutions, respectively, of~\eqref{eq:auxiliary-S(t)} and~\eqref{eq:auxiliary-P(t)} with positive initial conditions. 

As we will see in the following, using the global attractivity of solutions of~\eqref{eq:auxiliary-S(t)} and~\eqref{eq:auxiliary-P(t)} in $]0,+\infty[$ and the global
attractivity of {solutions given at S6.4) and S6.5)} we can easily conclude that~\eqref{eq:cR0-ext} is independent of the particular solutions
~{ considered in S6.4) and S6.5)}.
  Similarly, it is easy to conclude that~\eqref{eq:cR0-per} is independent of the particular solutions of~\eqref{eq:auxiliary-S(t)} and~\eqref{eq:auxiliary-P(t)} with positive
  initial conditions considered.

\begin{proposition}\label{teo:independence-of-R0-of-solutions}
The numbers~\eqref{eq:cR0-ext} and~\eqref{eq:cR0-per} are independent, respectively, of the particular solutions { considered in S6.4) and S6.5)} and of the particular solutions of~\eqref{eq:auxiliary-S(t)} and~\eqref{eq:auxiliary-P(t)}  with positive initial conditions chosen.
\end{proposition}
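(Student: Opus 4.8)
The plan is to show that replacing the solutions that enter~\eqref{eq:cR0-ext} and~\eqref{eq:cR0-per} by any other admissible choice alters the integrand by a quantity that tends to zero uniformly; since the windows $[t,t+\lambda]$ have fixed length $\lambda$, the sliding integrals then differ by $o(1)$ as $t\to+\infty$, so their $\liminf$ (respectively $\limsup$) coincide. Concretely, if $F_a$ and $F_b$ denote the two integrands obtained from two choices of solutions, then
\[
\abs{\int_t^{t+\lambda} F_a(s)\ds - \int_t^{t+\lambda} F_b(s)\ds} \le \lambda \sup_{s \ge t} \abs{F_a(s)-F_b(s)},
\]
so it suffices to prove $\sup_{s\ge t}\abs{F_a(s)-F_b(s)}\to 0$. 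I would first record that, by S\ref{cond-12}), all solutions involved remain in a fixed bounded set, so the locally Lipschitz function $g$ of S\ref{cond-3}) is globally Lipschitz there with some constant $K$, and that $\beta,\eta$ are bounded by S\ref{cond-1}).

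For~\eqref{eq:cR0-per} the argument is immediate because the two defining equations are uncoupled: given two solutions $(s^*_a,y^*_a)$ and $(s^*_b,y^*_b)$, condition S\ref{cond-9}) gives $\abs{s^*_a(t)-s^*_b(t)}\to 0$ and S\ref{cond-10}) gives $\abs{y^*_a(t)-y^*_b(t)}\to 0$. Writing out the integrand difference and using the Lipschitz estimate,
\[
\abs{F_a(t)-F_b(t)} \le \beta^u\abs{s^*_a(t)-s^*_b(t)} + \eta^u K\bigl(\abs{s^*_a(t)-s^*_b(t)}+\abs{y^*_a(t)-y^*_b(t)}\bigr)\to 0,
\]
which closes this case.

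For~\eqref{eq:cR0-ext} the same scheme applies, but the coupling between the two auxiliary subsystems is the main point to handle. Changing the solution from S6.5) alters $z^*_2$ in the integrand and, simultaneously, perturbs the vector field of~\eqref{eq:auxiliary-system-SP-1} through the driving term $\hat z_\eps$, hence perturbs $x^*_1$ as well. The step I expect to be delicate is showing that $x^*_1$ still converges: two copies of~\eqref{eq:auxiliary-system-SP-1} whose driving terms differ by $a(t)f(\cdot,0,0)(\hat z^a-\hat z^b)$, a quantity tending to zero since $\hat z^a-\hat z^b\to0$ by S6.5) while $a,f$ are bounded, carry globally asymptotically stable solutions (S6.4)) that must themselves become asymptotically equal. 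This is a roll-forward/asymptotic-equality argument exploiting that the global asymptotic stability in S6.4) holds on a set containing $(\R^+)^2$, so that the vanishing perturbation can be absorbed. Granting $\abs{x^*_{1,a}(t)-x^*_{1,b}(t)}\to 0$ together with $\abs{z^*_{2,a}(t)-z^*_{2,b}(t)}\to0$ from S6.5), the same Lipschitz-plus-window estimate as above yields $\sup_{s\ge t}\abs{F_a(s)-F_b(s)}\to0$, and hence the independence of~\eqref{eq:cR0-ext}. The main obstacle throughout is precisely this propagation of asymptotic equality through the coupled and perturbed subsystem; the uncoupled pieces are routine.
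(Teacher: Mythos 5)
Your proposal is correct and takes essentially the same route as the paper's proof: bound the difference of the sliding-window integrals by $\lambda\sup_{s\ge t}\left|F_a(s)-F_b(s)\right|$, then use the attractivity hypotheses (S4)--S5) for $\cR^u(\lambda)$, S6.4)--S6.5) for $\cR^\ell(\lambda)$) together with boundedness and the local Lipschitz property of $g$ on a bounded region to make this supremum vanish before passing to $\limsup$/$\liminf$. The coupled step you flag as delicate --- propagating asymptotic equality of the $z^*_2$-components through the driving term of~\eqref{eq:auxiliary-system-SP-1} to the $x^*_1$-components --- is precisely the point the paper itself disposes of by directly invoking the global asymptotic stability in S6.4)--S6.5), so your treatment is no less complete than the published one.
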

{\begin{proof}
{ Let $(x_1^*(t),z_1^*(t))$, $(x_2^*(t),z_2^*(t))$ and  
$(\bar x_1^*(t),\bar z_1^*(t))$, $ (\bar x_2^*(t),\bar z_2^*(t))$ be two distinct pairs of nonnegative solutions of~\eqref{eq:auxiliary-system-SP-1} and \eqref{eq:auxiliary-system-SP-2} as in S6.4) and S6.5)}.
Let $\delta>0$. By~ {S\ref{cond-11})}, for $t\geq  {T_\delta}$ sufficiently large, we have
\[
{
x_1^*(t)-\delta \leqslant \bar x_1^*(t) \leqslant x_1^*(t)+\delta  {\quad \text{ and } \quad}
z_2^*(t)-\delta \leqslant \bar z_2^*(t) \leqslant z_2^*(t)+\delta.
}
\]
Additionally,  {by~S\ref{cond-3})} there is $c>0$ such that, for  {every $t\geq T_\delta$},
\small{
\[
\begin{split}
& \left|\int_t^{t+\lambda} \beta(s)x_1^*(s)-\eta(s)g(x_1^*(s),0,{z_2^*}(s))-c(s) \ds-\int_t^{t+\lambda} \beta(s){\bar x_1^*(s)}-\eta(s)g({\bar x_1^*}(s),0,{\bar z_2^*}(s))-c(s) \ds \right|\\
&  {\le \int_t^{t+\lambda} \beta(s)\left|x_1^*(s)-{\bar x_1^*}(s)\right|+\eta(s)\left|g(x_1^*(s),0,{z_2^*}(s))-g({\bar x_1^*}(s),0,{\bar z_2^*}(s))\right| \ds}\\
&  {\le \lambda\beta^u \delta+2\lambda \eta^u\phi(\delta)},
\end{split}
\]
}
with $\phi(\delta)\to 0$ as $\delta \to 0$. We conclude that, for every  $\delta>0$,
 \[
\begin{split}
& \limsup_{t \to +\infty}\int_t^{t+\lambda} \beta(s)x_1^*(s)-\eta(s)g(x_1^*(s),0,{z_2^*}(s))-c(s) \ds-\lambda\beta^u \delta-2\lambda \eta^u\phi(\delta)\\
\leqslant & \limsup_{t \to +\infty}\int_t^{t+\lambda} \beta(s){\bar x_1^*}(s)-\eta(s)g({\bar x_1^*}(s),0,{\bar z_2^*}(s))-c(s) \ds\\
\leqslant & \limsup_{t \to +\infty}\int_t^{t+\lambda} \beta(s)x_1^*(s)-\eta(s)g(x_1^*(s),0,{z_2^*}(s))-c(s) \ds+\lambda\beta^u \delta+2\lambda \eta^u\phi(\delta),
\end{split}
\]

Taking $\liminf$ instead of $\limsup$, and using the same reasoning we conclude that we have similar inequalities. Thus $\cR^\ell(\lambda)$ is independent of the chosen solution.
 {In the same way we can prove that $\cR^u(\lambda)$ is also independent of the particular solutions chosen} and the result follows.
\end{proof}}

\begin{theorem}\label{teo:main-extinction}
Assume that conditions S\ref{cond-1}) to S\ref{cond-10}) hold. Assume further that either $G(t,S)=\Lambda(t)-\mu(t)S$ and  $g(S+I,0,P)\le g(S,I,P)$ or $g$ does not depend on $I$. If there is $\lbd>0$ such that $\cR^u(\lambda)<0$, then the infectives in system~\eqref{eq:principal} go to extinction.
\end{theorem}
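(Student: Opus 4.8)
The plan is to exploit that the infective equation is linear in $I$ once the other populations are treated as coefficients: writing
$I(t)=I(t_0)\exp\int_{t_0}^t[\beta(s)S(s)-\eta(s)g(S(s),I(s),P(s))-c(s)]\ds$,
extinction is equivalent to showing that this exponent integral tends to $-\infty$. So the whole argument reduces to bounding the integrand from above, on average over windows of length $\lbd$, by the integrand $\chi(t):=\beta(t)s^*(t)-\eta(t)g(s^*(t),0,y^*(t))-c(t)$ defining $\cR^u(\lbd)$, plus an arbitrarily small error.

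First I would produce the two one-sided comparisons that feed the threshold. Dropping the nonnegative predation and incidence terms in the first equation gives $S'\le G(t,S)$, so by the comparison principle (using that $G$ is locally Lipschitz, S\ref{cond-3}) $S(t)$ stays below the solution $\tilde s$ of~\eqref{eq:auxiliary-S(t)} with $\tilde s(t_0)=S(t_0)>0$; by the global attractivity S\ref{cond-9}, $\tilde s(t)-s^*(t)\to0$, whence for every $\eps>0$ eventually $S(t)\le s^*(t)+\eps$. Dually, dropping the nonnegative gain terms in the third equation gives $P'\ge h(t,P)P$, so comparison together with S\ref{cond-10} yields $P(t)\ge y^*(t)-\eps$ eventually. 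Because $g$ is nonincreasing in its first argument and nondecreasing in its third (S\ref{cond-3}), these are precisely the inequalities that let me replace $S$ by $s^*$ and $P$ by $y^*$ inside $g$, once the $I$-slot is dealt with.

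The $I$-dependence of $g$ is the only obstruction, and this is where the dichotomy in the hypothesis enters. If $g$ does not depend on $I$, monotonicity and local Lipschitzness give directly $g(S,I,P)=g(S,0,P)\ge g(s^*+\eps,0,y^*-\eps)\ge g(s^*,0,y^*)-\omega(\eps)$ with $\omega(\eps)\to0$, and combined with $\beta S\le\beta(s^*+\eps)$ the integrand is at most $\chi(t)+\text{(small)}$. In the other case I would instead use $g(S,I,P)\ge g(S+I,0,P)$, which removes the second slot at the cost of replacing $S$ by $N:=S+I$ inside $g$; the role of taking $G$ linear is that $N$ admits its own comparison, since the incidence terms cancel in $S'+I'$, and one aims at $\limsup_t(N(t)-s^*(t))\le0$, giving $g(S+I,0,P)\ge g(s^*+\eps,0,y^*-\eps)\ge g(s^*,0,y^*)-\omega(\eps)$ exactly as before. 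With the integrand controlled in either case, I would finish by fixing $\eps$ small enough that the accumulated error over a window of length $\lbd$ is below $\tfrac12\abs{\cR^u(\lbd)}$; since $\cR^u(\lbd)<0$ there is $T$ with $\int_t^{t+\lbd}\chi\ds$ below a fixed negative number for $t\ge T$, so $\int_t^{t+\lbd}[\beta S-\eta g-c]\ds\le-\delta<0$ for all $t\ge T$. Telescoping over the points $T+k\lbd$ drives $\ln I\to-\infty$, and boundedness of the integrand on each window (S\ref{cond-12}) upgrades this to $I(t)\to0$ for all $t$.

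The main obstacle is the middle step of the second case: justifying $\limsup_t(S(t)+I(t)-s^*(t))\le0$, i.e. that the linear-recruitment structure really forces the total prey below the disease-free carrying capacity $s^*$. The naive estimate only gives $N'=\Lambda-\mu S-a f P-\eta g I-c I\le\Lambda-\mu N+\mu I$, so the defect $\mu I$ must be absorbed before the comparison with~\eqref{eq:auxiliary-S(t)} can be invoked. This is exactly where the linearity of $G$ together with the relation $g(S+I,0,P)\le g(S,I,P)$ has to be used with care, and I expect it to be the delicate part of the proof, the disease-free subcase being essentially routine.
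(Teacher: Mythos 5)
Your overall strategy coincides with the paper's proof: bound $S$ from above by solutions of \eqref{eq:auxiliary-S(t)} via $S'\le G(t,S)$, comparison and S\ref{cond-9}), bound $P$ from below by solutions of \eqref{eq:auxiliary-P(t)} via $P'\ge h(t,P)P$ and S\ref{cond-10}), split into the same two cases according to the hypothesis on $g$, and integrate the linear $I$-equation over windows of length $\lbd$ on which the $\eps$-perturbed integrand has integral $\le -\kappa<0$. (Structurally the paper argues by contradiction in two stages --- first $\liminf_{t\to+\infty}I(t)=0$, then $I(t)\le\eps\e^{h\lbd}$ eventually --- but your direct telescoping is legitimate, indeed simpler, because the comparison estimates hold for all sufficiently large $t$ along any fixed solution, without any contradiction hypothesis on $I$.)

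The one step you leave open, however, is exactly the crux, and it does not close by itself. The paper settles $\limsup_{t\to+\infty}\prts{S(t)+I(t)-s^*(t)}\le 0$ in one line: the incidence terms cancel in $(S+I)'$, giving $(S+I)'\le \Lambda(t)-\mu(t)S-c(t)I$, and this is then bounded by $\Lambda(t)-\mu(t)(S+I)$; that last inequality is precisely $-c(t)I\le-\mu(t)I$, i.e.\ the pointwise domination $c(t)\ge\mu(t)$. This inequality appears nowhere among the conditions S\ref{cond-1})--S\ref{cond-11}); it is implicit in the modelling interpretation that $c$ is the \emph{total} removal rate of the infective class and hence contains the natural mortality $\mu$. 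Your computation shows why some such hypothesis is unavoidable: without it one only obtains bounds of the type $(S+I)'\le\Lambda(t)-\min\set{\mu(t),c(t)}(S+I)$, which compare $N=S+I$ with a scalar equation \emph{different} from \eqref{eq:auxiliary-S(t)}, whose attracting solution dominates $s^*$; then $g(S+I,0,P)$ can no longer be minorized by $g(s^*(t)+\eps,0,y^*(t)-\eps)$ and the threshold $\cR^u(\lbd)$ would have to be modified. So to complete your argument in the second case you should state $c(t)\ge\mu(t)$ (or extract it from the model), deduce $(S+I)'\le\Lambda(t)-\mu(t)(S+I)$, and invoke comparison with \eqref{eq:auxiliary-S(t)} and S\ref{cond-9}) to get $S(t)+I(t)\le s^*(t)+\eps$ for large $t$; your chain $g(S,I,P)\ge g(S+I,0,P)\ge g(s^*(t)+\eps,0,y^*(t)-\eps)$ then goes through, and the remainder of your proof (the disease-free-slot case and the windowed integration) matches the paper and is correct.
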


\begin{proof}
Assume that there is $\lambda>0$ such that $\cR^u(\lambda)<0$ and let $s^*(t)$ and $y^*(t)$ be particular solutions, respectively, of~\eqref{eq:auxiliary-S(t)} and~\eqref{eq:auxiliary-P(t)} with positive initial conditions. Since functions $\beta$ and $\eta$ are bounded, there are $\kappa>0$, $t_0>0$ and $\eps_0>0$ such that, for  $t \ge t_0$ and $\delta \in \ ]0,\eps_0]$, we have
\begin{equation}\label{eq:hip-extinction}
\int_t^{t+\lambda} \beta(s)(s^*(s)+\delta)-\eta(s)g(s^*(s)+\delta,0,y^*(s)-\delta)-c(s) \ds \le -\kappa < 0.
\end{equation}

Let $(S(t),I(t),P(t))$ be a solution of~\eqref{eq:principal} with positive initial conditions. We will prove first that
\begin{equation}\label{eq:not-strong-persist}
\liminf_{t \to +\infty} I(t) =0.
\end{equation}
Assume that~\eqref{eq:not-strong-persist} does not hold. Then, there is $\eps>0$ such that
$I(t)>\eps$ for all sufficiently large $t$. By the first equation of~\eqref{eq:principal} we have
\[
S' \le G(t,S)
\]
and thus $S(t)\le s(t)$, where $s(t)$ is the solution of~\eqref{eq:auxiliary-S(t)} with $s(t_0)=S(t_0)$. By condition~S\ref{cond-9}), given
$\eps \in \, ]0,\eps_0]$, we have
$S(t) \le s^*(t)+\eps$, for all sufficiently large $t$.

By the third equation of~\eqref{eq:principal}, we have
\[
P' \ge h(t,P)P
\]
and thus $P(t)\ge y(t)$, where $y(t)$ is the solution of~\eqref{eq:auxiliary-P(t)} with $y(t_0)=P(t_0)$. By condition~S\ref{cond-10}), given
$\eps \in \, ]0,\eps_0]$, we have
$P(t) \ge y^*(t)-\eps$, for all sufficiently large $t$.

{When $G(t,S)=\Lambda(t)-\mu(t)S$, 
\[(S+I)'\leq \Lambda(t)-\mu(t)S-c(t)I\leq \Lambda(t)-\mu(t)(S+I),\]
and consequently, for sufficiently large $t$
\[S(t)+I(t)\leq s^*(t)+\epsilon.\]
Under this assumption on $G$, by the second equation of~\eqref{eq:principal}, since we assumed that $g(S+I,0,P)\le g(S,I,P)$, we have  
\begin{align}
I'
& \le  \left[\beta(t)(s^*(t)+\eps)-\eta(t)g(s^*(t)+\eps,0,y^*(t)-\eps)-c(t)\right]I, \label{eq:g does not dep on I}
\end{align}
for all sufficiently large $t$. 
Notice that, for a general $G$, if $g$ does not depend on $I$ we have $g(S,I,P)\geq g(s^*(t)+\epsilon,0,y^*(t)-\epsilon)$ and we still obtain inequality~\eqref{eq:g does not dep on I}.
}

Denoting by $\lfloor \alpha \rfloor$ the integer part of $\alpha$ and integrating the previous equation, we get
\[
\begin{split}
I(t)
& \le I(t_0)\Exp\left\{\int_{t_0}^t \beta(r) (s^*(r)+\eps)-\eta(r)g(s^*(r)+\eps,0,y^*(r)-\eps)c(r)\dr\right\}\\
& \le I(t_0)\e^{\lambda (\beta^u (s^*)^u+\eps\beta^u)}\\
& \quad  \times\Exp\left\{\int_{t_0}^{t_0+\lfloor \frac{t-t_0}{\lambda} \rfloor \lambda} \beta(r) (s^*(r)+\eps)-\eta(r)g(s^*(r)+\eps,0,y^*(r)-\eps)-c(r)\dr \right\}\\
& \le I(t_0)\e^{-\lfloor (t-t_0)/\lambda \rfloor \kappa} \e^{\lambda (\beta^u (s^*)^u+\eps\beta^u)},
\end{split}
\]
for all sufficiently large $t$. Since $\lfloor (t-t_0)/\lambda \rfloor \kappa \to +\infty$ as $t\to +\infty$,
we get a contradiction to the hypothesis that there is $\eps>0$ such that $I(t)>\eps$ for sufficiently large $t$. We conclude that~\eqref{eq:not-strong-persist} holds.

Let $\eps>0$. Next we will prove that for sufficiently large $t$
\begin{equation}\label{eq:not-weak-persist}
I(t) \le \eps \e^{h\lbd},
\end{equation}
where
$$h=\sup_{t \ge 0} \left| \beta(t) (s^*(t)+\eps_0)-\eta(t)g(s^*(t)+\eps_0,0,y^*(t)-\eps_0)-c(t) \right|.
$$
By~\eqref{eq:not-strong-persist}, there exists $t_1\ge t_0$ such that $I(t_1)<\eps$.

Assume, by contradiction that~\eqref{eq:not-weak-persist} does not hold. Then, there is $t_2>t_1$ such that
$I(t_2) > \eps \e^{h\lbd}$. Since $I(t_1)<\eps$, there is $t_3\in \ ]t_1,t_2[$ such that $I(t_3)=\eps$ and
$I(t)>\eps$, for all $t \in \ ]t_3,t_2[$. Integrating we get, by~\eqref{eq:hip-extinction},
\[
\begin{split}
\eps\e^{h\lbd}
& < I(t_2)\\
& \le I(t_3)\exp\left\{\int_{t_3}^{t_2} \beta(r) (s^*(r)+\eps)-\eta(r)g(s^*(r)+\eps,0,y^*(r)-\eps)-c(r)\dr\right\}\\
& \le \eps \exp\left\{\int_{t_3+\lfloor{(t_2-t_3)}/\lbd\rfloor \lbd}^{t_2} \beta(r) (s^*(r)+\eps_0)-\eta(r)g(s^*(r)+\eps_0,0,y^*(r)-\eps_0)-c(r)\dr\right\}\\
& \le \eps\e^{h\lbd},
\end{split}
\]
witch is a contradiction. Thus, we conclude that~\eqref{eq:not-weak-persist} holds and, since $\eps \in \ ]0,\eps_0]$ is arbitrary, we conclude that
$I(t) \to 0$ as $t \to 0$, as claimed.
\end{proof}

\begin{theorem}\label{teo:main-persistence}
Assume that conditions S\ref{cond-1}) to S\ref{cond-12}) and  S\ref{cond-11}) hold. If there is $\lbd>0$ such that $\cR^\ell(\lambda)>0$, then the infectives in system~\eqref{eq:principal} are uniformly strong persistent.
\end{theorem}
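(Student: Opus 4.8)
The plan is to mirror the structure of the extinction proof with all comparisons reversed, splitting the argument into a weak-persistence step and an upgrade to uniform strong persistence. The uniform upper bound is immediate: by S\ref{cond-12}) every solution satisfies $\limsup_{t\to+\infty}(S(t)+I(t)+P(t)) < L$, so I take $m_2 = L$ and concentrate on producing a uniform $m_1 > 0$ with $\liminf_{t\to+\infty} I(t) > m_1$ for every solution with positive initial conditions.

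First I would calibrate the auxiliary systems. Since $\cR^\ell(\lbd) > 0$, and since by S6.4)--S6.5) the maps $\eps \mapsto (x^*_{1,\eps}, z^*_{2,\eps})$ are continuous in $\|\cdot\|_0$ while $g$ is locally Lipschitz (S\ref{cond-3})), I can fix $\eps_0 > 0$, a small $\sigma > 0$ and a constant $\kappa > 0$ with
\[
\liminf_{t\to+\infty}\int_t^{t+\lbd} \beta(s)(x^*_{1,\eps_0}(s)-\sigma) - \eta(s)\,g(x^*_{1,\eps_0}(s)-\sigma,0,z^*_{2,\eps_0}(s)+\sigma) - c(s)\ds \ge \kappa > 0,
\]
because the integrands converge to that of $\cR^\ell(\lbd)$ as $(\eps_0,\sigma)\to(0,0)$, and Proposition~\ref{teo:independence-of-R0-of-solutions} guarantees that this limit value is intrinsic. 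Using S6.2)--S6.3) I would also fix a threshold $\delta = \delta_{\eps_0} > 0$ small enough that $\beta^u \delta \le v(\eps_0)\rho^\ell$.

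The technical core is a comparison lemma: there is a relaxation time $T_0$, uniform over solutions, such that whenever $I(t) \le \delta$ on an interval $[\tau,\tau+T]$ with $T \ge T_0$, one has $S(t) \ge x^*_{1,\eps_0}(t) - \sigma$ and $P(t) \le z^*_{2,\eps_0}(t) + \sigma$ for $t \in [\tau+T_0,\tau+T]$. The idea is that when $I \le \delta$ the loss term $\beta(t)SI$ in the first equation of~\eqref{eq:principal} and the gain term $\theta(t)\eta(t)g(S,I,P)I$ in the third are dominated, through $\beta^u\delta \le v(\eps_0)\rho^\ell$ and S6.2)--S6.3), by the extra terms $v(\eps_0)\rho(t)x$ and $v(\eps_0)\rho(t)g(\cdot)$ built into~\eqref{eq:auxiliary-system-SP-1}--\eqref{eq:auxiliary-system-SP-2}; combining this with the bounds $G_{1,\eps_0} \le G \le G_{2,\eps_0}$, $h_{1,\eps_0} \le h \le h_{2,\eps_0}$ of S6.1) and the monotonicity of $f$ and $g$ of S\ref{cond-3}), the pair $(S,P)$ is trapped, via a differential-inequality (quasimonotone) comparison, between solutions of the two auxiliary systems; the global asymptotic stability of S6.4)--S6.5) then pulls those comparison solutions onto the distinguished attracting solutions $x^*_{1,\eps_0}$ and $z^*_{2,\eps_0}$. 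This is where I expect the main obstacle: the $(S,P)$ subsystem is not cooperative and $f$ carries no monotonicity in its first argument, so the inequalities must be organised in the right order (solve~\eqref{eq:auxiliary-system-SP-2} first and feed its decoupled upper bounds into~\eqref{eq:auxiliary-system-SP-1}, which depends on $z^*_{2,\eps_0}$), and the relaxation time $T_0$ must be extracted uniformly in the solution, which rests on the attractivity in S6.4)--S6.5) being uniform on the absorbing region $\cR$ of S\ref{cond-12}).

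Granting the lemma, weak persistence follows as in the extinction proof, reversed: if some solution had $\limsup_{t\to+\infty} I(t) \le m^* < \delta$, then $I(t) \le \delta$ for all large $t$, so eventually $S \ge x^*_{1,\eps_0}-\sigma$ and $P \le z^*_{2,\eps_0}+\sigma$; by the monotonicity of $g$, giving $g(S,I,P) \le g(x^*_{1,\eps_0}-\sigma,0,z^*_{2,\eps_0}+\sigma)$, the second equation yields $I' \ge [\beta(x^*_{1,\eps_0}-\sigma) - \eta g(\cdots) - c]I$, and integrating over consecutive $\lbd$-windows with the lower bound $\kappa$ would force $I(t)\to+\infty$, contradicting S\ref{cond-12}); hence $\limsup_{t\to+\infty} I(t) > m^*$ for every solution. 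Finally I would upgrade to strong persistence by the standard excursion argument: on $\cR$ there is $M>0$ with $\beta(t)S - \eta(t)g(S,I,P) - c(t) \ge -M$, so $I' \ge -M I$ bounds the decay rate. On any excursion below $m^*$ the bounded decay controls $I$ over the first $T_0$ units of time (keeping $I \ge m^*\e^{-MT_0}$), while after $T_0$ the comparison lemma makes $I$ non-decreasing across $\lbd$-windows, so within each window the bounded decay gives $I \ge m^*\e^{-M(T_0+\lbd)}$. Thus $\liminf_{t\to+\infty} I(t) \ge m_1 := \tfrac{1}{2} m^* \e^{-M(T_0+\lbd)} > 0$, uniformly, completing the proof.
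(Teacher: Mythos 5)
Your weak-persistence half is essentially the paper's: the same calibration of a window constant $\kappa$ from $\cR^\ell(\lbd)>0$ (using Proposition~\ref{teo:independence-of-R0-of-solutions} and the continuity in S6.4)--S6.5)), the same ordering of comparisons (upper bounds from~\eqref{eq:auxiliary-system-SP-2} first, then fed into~\eqref{eq:auxiliary-system-SP-1}), the same monotonicity bookkeeping for $f$ and $g$, and the same blow-up contradiction against the boundedness in S\ref{cond-12}). Where you genuinely diverge is the upgrade to uniform strong persistence. The paper argues by contradiction with a sequence of solutions $x_n$ and the two $n$-dependent levels $\rho^\ell v(\eps/n)$ and $\rho^u v(\eps/n^2)$: the crude decay bound $I'\ge -(\eta^u g(L,0,0)+c^u)I$ forces the excursion lengths $t_{n,k}-s_{n,k}$ to diverge because $v(\eps/n)/v(\eps/n^2)\to\infty$, and the $\kappa$-windows then force $\rho^u v(\eps/n^2)/(\rho^\ell v(\eps/n))>1$, contradicting that this ratio tends to $0$ --- both limits extracted from the differentiability condition S6.2). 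You replace all of this with a direct excursion estimate built on a uniform relaxation time $T_0$, which yields an explicit floor $m_1=\tfrac12 m^*\e^{-M(T_0+\lbd)}$ and never uses the differentiability of $v$ at all. That is cleaner and more quantitative.

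The hinge, however, is exactly the point you flag yourself: S6.4)--S6.5) assert global asymptotic stability of individual solutions, and that does not by itself produce a relaxation time $T_0$ uniform over initial data in $\cR$ and over entry times $\tau$; your comparison lemma is therefore a strengthening of the stated hypotheses (uniform attractivity on the absorbing region), not a consequence of them. To be fair, the paper's own proof commits a milder version of the same sin: its step asserting $\left|x^*_{2,\eps/n}(t)-\hat x_{n,k}(t)\right|<\delta/2$ ``for all sufficiently large $k$'' restarts comparison solutions at times $s_{n,k}\to\infty$ on windows whose length grows in $n$, not in $k$, which likewise needs attraction uniform in the starting time. So under the uniform reading of ``globally asymptotically stable'' (arguably the intended one for nonautonomous systems) both proofs go through, and under a strictly pointwise reading both share the same latent gap. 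Net assessment: your proposal is correct modulo the uniform-attractivity reading you make explicit; it trades the paper's S6.2)-driven sequence argument for a stronger but transparent stability assumption, and in exchange obtains an explicit, solution-independent persistence constant.
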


\begin{proof}
Assume that there is $\lambda>0$ such that $\cR^\ell(\lambda)>0$ and let us fix particular families of solutions of systems~\eqref{eq:auxiliary-system-SP-1} and
\eqref{eq:auxiliary-system-SP-2}, respectively $(x^*_{1,\eps}(t),z^*_{1,\eps}(t))$ and $(x^*_{2,\eps}(t),z^*_{2,\eps}(t))$, with positive initial conditions and satisfying {N}6.4)
and {N}6.5). Then, we can choose $t_0>0$ and $\eps_0>0$ such that, for $t \ge t_0$ and $\delta \in \, [0,\eps_0]$ we have
\begin{equation}\label{eq:cond-R0>0}
\int_t^{t+\lambda} \beta(s)({x^*_{1}}(s)-\delta)-\eta(s)g({x^*_{1}}(s)-\delta,\delta,{z^*_{2}}(s)+\delta)-c(s) \ds \ge \kappa > 0.
\end{equation}

Let $(S(t),I(t),P(t))$ be a solution of~\eqref{eq:principal} with positive initial conditions.
We will prove first that there is $\eps>0$ such that
\begin{equation}\label{eq:weak-persist}
\limsup_{t \to +\infty} I(t)\ge \frac{v(\eps){\rho^\ell}}{(1+\beta^u)(1+\theta^u\eta^u)}>0.
\end{equation}
Assume that for all sufficiently small $\eps>0$
\[
\limsup_{t \to +\infty} I(t)<\frac{v(\eps){\rho^\ell}}{(1+\beta^u)(1+\theta^u\eta^u)}.
\]
Then, we conclude that there is $t_1>t_0$, such that
\begin{equation}\label{eq:I<vrho}
I(t)<\frac{v(\eps){\rho^\ell}}{(1+\beta^u)(1+\theta^u\eta^u)}<v(\eps)\rho(t),
\end{equation}
for each $t \ge t_1$. By the first and third equations of~\eqref{eq:principal} and the inequalities in S6.1) we have
\[
\begin{cases}
{S' \le G_{2,\eps}(t,S)}\\
P'\le h_{2,\eps}(t,P)P+\gamma(t)a(t)f(S,0,P)P+v(\eps)\rho(t) \theta(t)\eta(t) g({S},{0},P)
\end{cases}.
\]
{Let} $(\hat x_\eps(t), \hat z_\eps(t))$ be the solution of
\[
\begin{cases}
{x' = G_{2,\eps}(t,x) }\\
z' = h_{2,\eps}(t,z)z+\gamma(t)a(t)f(x,0,z)z+v(\eps)\rho(t) \theta(t)\eta(t)g(x,{0},z)
\end{cases}
\]
with $\hat x_\eps(t_1)=S(t_1)$ and $\hat z_\eps(t_1)=P(t_1)$. We have $S(t)\le \hat x_\eps(t)$ and $P(t)\le \hat z_\eps(t)$ for $t \ge t_1$.
By the global stability assumption in~{N}6.5), we have
\[
\left|x^*_{2,\eps}(t)-\hat x_\eps(t)\right|\to 0 \quad \text{and} \quad \left|z^*_{2,\eps}(t)-\hat z_\eps(t)\right| \to 0, \quad \text{as} \ t \to +\infty
\]
and, by continuity, again according to~{N}6.5), we have for sufficiently large $t$
\[
\begin{split}
\left|{x^*_{2}(t)}-\hat x_\eps(t)\right|
& \le\left|x^*_{2}(t)-x^*_{2,\eps}(t)\right|+\left|x^*_{2,\eps}(t)-\hat x_\eps(t)\right|\\
& \le\|x^*_{2}-x^*_{2,\eps}\|_0+\left|x^*_{2,\eps}(t)-\hat x_\eps(t)\right|\\
& \le \phi_1(\eps),
\end{split}
\]
and
\[
\begin{split}
\left|{z^*_{2}}(t)-\hat z_\eps(t)\right|
& \le\left|{z^*_{2}}(t)-z^*_{2,\eps}(t)\right|+\left|z^*_{2,\eps}(t)-\hat z_\eps(t)\right|\\
& \le\|{z^*_{2}}-z^*_{2,\eps}\|_0+\left|z^*_{2,\eps}(t)-\hat z_\eps(t)\right|\\
& \le \phi_2(\eps),
\end{split}
\]
with $\phi_1(\eps),\phi_2(\eps)\to 0$ as $\eps \to 0$. In particular, for sufficiently large $t$,
\begin{equation}\label{eq:bound-sist-persist-P}
S(t) \le \hat s_\eps(t) \le \phi_1(\eps)+{x^*_{2}}(t) \quad \text{ and } \quad P(t) \le \hat z_\eps(t) \le \phi_2(\eps)+{z^*_{2}}(t).
\end{equation}

On the other hand, {by \eqref{eq:I<vrho} and} the first and third equations of~\eqref{eq:principal}, we have
{
\[
\begin{cases}
S' \ge G_{1,\eps}(t,S)-a(t){f(S,0,0)\hat z_\eps(t)}-v(\eps)\rho(t) S\\
P' \ge h_{1,\eps}(t,P)P+\gamma(t)a(t)f(S,v(\eps)\rho^u,P)P
\end{cases}
\]
}
Letting $(\tilde x_\eps(t), \tilde z_\eps(t))$ be the solution of
{
\[
\begin{cases}
x' = G_{1,\eps}(t,x)-a(t){f(S,0,0)\hat z_\eps(t)}-v(\eps)\rho(t) x\\
z' = h_{1,\eps}(t,z)z+\gamma(t)a(t)f(x,v(\eps)\rho^u,z)z
\end{cases}
\]
}
with $\tilde x(t_1)=S(t_1)$ and $\tilde z(t_1)=P(t_1)$, we have $S(t) \ge \tilde x_\eps(t)$ and $P(t)\ge \tilde z_\eps(t)$, for all $t\ge t_1$.
By the global stability assumption in~{N}6.4), we have
\[
\left|x^*_{1,\eps}(t)-\tilde x_\eps(t)\right|\to 0 \quad \text{and} \quad \left|z^*_{1,\eps}(t)-\tilde z_\eps(t)\right| \to 0, \quad \text{as} \ t \to +\infty.
\]
and, by the continuity property in~{S6.4}), for sufficiently large $t$, we have
\[
\begin{split}
\left|{x^*_{1}}(t)-\tilde x_\eps(t)\right|
& \le\left|{x^*_{1}}(t)-x^*_{1,\eps}(t)\right|
+\left|x^*_{1,\eps}(t)-\tilde x_\eps(t)\right|\\
& \le\|{x^*_{1}}-x^*_{1,\eps}\|_0
+\left|x^*_{1,\eps}(t)-\tilde x_\eps(t)\right|\\
& \le \psi_1(\eps),
\end{split}
\]
and
\[
\begin{split}
\left|{z^*_{1}}(t)-\tilde z_\eps(t)\right|
& \le\left|{z^*_{1}}(t)-z^*_{1,\eps}(t)\right|
+\left|z^*_{1,\eps}(t)-\tilde z_\eps(t)\right|\\
& \le\|{z^*_{1}}-z^*_{1,\eps}\|_0
+\left|z^*_{1,\eps}(t)-\tilde z_\eps(t)\right|\\
& \le \psi_2(\eps),
\end{split}
\]
with $\psi(\eps)\to 0$ as $\eps \to 0$. In particular, for sufficiently large $t$,
\begin{equation}\label{eq:bound-sist-persist-S}
S(t) \ge \tilde x_\eps(t) \ge {x^*_{1}}(t)-\psi_1(\eps) \quad \text{and} \quad P(t) \ge \tilde z_\eps(t) \ge {z^*_{1}}(t)-\psi_2(\eps).
\end{equation}

By the second equation in~\eqref{eq:principal},~\eqref{eq:cond-R0>0},~\eqref{eq:bound-sist-persist-P} and~\eqref{eq:bound-sist-persist-S} we get, for $t\ge t_1$,
\small{
\[
\begin{split}
& \int_t^{t+\lambda} \beta(s)S(s)-\eta(s)g(S(s),I(s),P(s))-c(s) \ds\\
& \ge \int_t^{t+\lambda} \beta(s)({x^*_{1}}(s)-\psi_1(\eps))-\eta(s)g({x^*_{1}}(s)-\psi_1(\eps),v(\eps)\rho^u,{z^*_{2}}(s)+\phi_2(\eps))-c(s) \ds
\ge \kappa.
\end{split}
\]
}
Thus, choosing $\eps>0$ such that {$\max\{\phi_2(\eps),\psi_1(\eps),v(\eps)\rho^u\}<\eps_0$}, we have
{
\[
\begin{split}
I(t) &
= I(t_1) \Exp\left\{\int_{t_1}^t \beta(s)S(s)-\eta(s)g(S(s),I(s),P(s))-c(s) \ds\right\}\\
& \ge I(t_1) \Exp\left\{\int_{t_1}^t \beta(s)({x^*_{1}}(s)-\psi_1(\eps))ds\right\}\\
&\quad{\times\Exp}\left\{\int_{t_1}^t-\eta(s)g({x^*_{1}}(s)-\psi_1(\eps),v(\eps)\rho^u,{z^*_{2}}(s)+\phi_2(\eps))-c(s) \ds\right\}\\
& \ge I(t_1)\e^{-\lbd(\beta^u\psi_1(\eps)+\eta^u g(({x^*_{1}})^u-\psi_1(\eps),v(\eps)\rho^u,({z^*_{2}})^\ell+\phi_2(\eps){)}+c^u)}\\
& \quad 	{\times\Exp}\left\{\int_{t_1}^{t_1+\lfloor (t-t_1)/\lbd \rfloor \lbd} \beta(s)({x^*_{1}}(s)-\psi_1(\eps))ds\right\}\\
&\quad {\times\Exp}\left\{\int_{t_1}^{t_1+\lfloor (t-t_1)/\lbd \rfloor \lbd} -\eta(s)g({x^*_{1}}(s)-\psi_1(\eps),v(\eps)\rho^u,{z^*_{2}}(s)+\phi_2(\eps))-c(s)
\ds\right\}\\
& \ge I(t_1) \e^{\lfloor (t-t_1)/\lbd \rfloor \kappa } \e^{-\lbd(\beta^u\psi_1(\eps)+\eta^u g(({x^*_{1}})^u{-\psi_1(\eps)},v(\eps)\rho^u,({x^*_{1}})^\ell+\phi_2(\eps))+c^u)},
\end{split}
\]
}
a contradiction to the fact that, according to~{S\ref{cond-12})}, $I(t)$ is bounded. We conclude that~\eqref{eq:weak-persist} holds.

Next we will prove that there is $m_1>0$ such that for any solution $(S(t),I(t),P(t))$ with positive initial condition,
\begin{equation}\label{eq:strong-persist}
\liminf_{t \to +\infty} I(t) > m_1.
\end{equation}

Assume that~\eqref{eq:strong-persist} does not hold. Then, given $\eps \in ]0,\eps_0[$, there exists a sequence of initial values $(x_n)_{n \in \N}$, with $x_n=(S_n,I_n,P_n)$ and
$S_n>0$, $I_n>0$ and $P_n>0$ such that
\begin{equation*}
\liminf_{t \to +\infty} I(t,x_n) < \frac{\rho^u v(\eps/n^2)}{(1+\theta^u\eta^u)(1+\beta^u)},
\end{equation*}
where $(S(t,x_n),I(t,x_n),P(t,x_n))$ denotes the solution of~\eqref{eq:principal} with initial conditions $S(0)=S_n$, $I(0)=I_n$, and $P(0)=P_n$.
By~\eqref{eq:weak-persist}, given $n \in \N$, there are two
sequences $(t_{n,k})_{k \in \N}$ and $(s_{n,k})_{k \in \N}$ with
\[
s_{n,1} < t_{n,1} < s_{n,2} < t_{n,2} < \cdots < s_{n,k} < t_{n,k} < \cdots
\]
and $\displaystyle \lim_{k \to +\infty} s_{n,k}  = +\infty$, such that
\begin{equation}\label{eq:maj-eps-eps1a}
I(s_{n,k},x_n) = \frac{\rho^\ell v(\eps/n)}{(1+\theta^u\eta^u)(1+\beta^u)}, \quad I(t_{n,k},x_n) = \frac{\rho^u v(\eps/n^2)}{(1+\theta^u\eta^u)(1+\beta^u)}
\end{equation}
and, for all $t \in ]s_{n,k},t_{n,k}[$,
\begin{equation}\label{eq:maj-eps-eps2}
\frac{\rho^u v(\eps/n^2)}{(1+\theta^u\eta^u)(1+\beta^u)} < I(t,x_n) <
\frac{\rho^\ell v(\eps/n)}{(1+\theta^u\eta^u)(1+\beta^u)}.
\end{equation}

By the second equation in~\eqref{eq:principal} and {S\ref{cond-12})}, for sufficiently large $t$, we have
\[
\begin{split}
I'(t,x_n)
& =\left[\beta(t)S(t,x_n)-\eta(t)g(S(t,x_n),I(t,x_n),P(t,x_n))-c(t)\right]I(t,x_n) \\
& \ge - (\eta^ug(L,0,0) +c^u) I(t,x_n).
\end{split}
\]
Therefore we obtain
\[
\int_{s_{n,k}}^{t_{n,k}} \frac{I'(r,x_n)}{I(r,x_n)} \ dr \ge  - (\eta^u g(L,0,0) +c^u)  (t_{n,k}-s_{n,k})
\]
and thus $ I(t_{n,k},x_n) \ge I(s_{n,k},x_n) \e^{ - (\eta^u g(L,0,0) +c^u) (t_{n,k}-s_{n,k})}$.
By~\eqref{eq:maj-eps-eps1a}, and S6.3) we get
\[
\frac{\rho^u v(\eps/n^2)}{\rho^\ell v(\eps/n)} \ge
\frac{\rho(t_{n,k})v(\eps/n^2)}{\rho(s_{n,k})v(\eps/n)} \ge \e^{- (\eta^u g(L,0,0) +c^u) (t_{n,k}-s_{n,k})}
\]
and therefore we have
\begin{equation}\label{eq:limite-t-s}
t_{n,k}-s_{n,k}  \ge \frac{\log(\rho^\ell/\rho^u)+\log(v(\eps/n)/v(\eps/n^2))}{\eta^u g(L,0,0) +c^u} \to +\infty
\end{equation}
as $n \to +\infty$, since, by~{S6.2)} we have
\[
\lim_{n \to +\infty} \frac{v(\eps/n)}{v(\eps/n^2)} = \lim_{n \to +\infty} \frac{n\, v'(\eps/n)}{2\, v'(\eps/n^2)}
\ge \lim_{n \to +\infty}  \frac{An}{2B} =+\infty.
\]

By the first and third equations of~\eqref{eq:principal} and~\eqref{eq:maj-eps-eps2}, we have, for $t \in \, ]s_{n,k},t_{n,k}[$,
\small{
\[
\begin{cases}
S' \le {G_{2,\eps}(t,S(t,x_n))}\\
P'\le h_{2,\eps}(t,P(t,x_n))P(t,x_n)+\gamma(t)a(t)f(S(t,x_n),0,P(t,x_n))P(t,x_n)\\
\quad \quad \quad +\rho(t)v(\eps/n)\theta(t)\eta(t)g(S(t,x_n),{0},P(t,x_n))
\end{cases}.
\]}

Letting $(\hat x_{n,k}(t), \hat z_{n,k}(t))$ be the solution of
\[
\begin{cases}
{x' = G_{2,\eps}(t,x)}\\
z' = h_{2,\eps}(t,z)z+\gamma(t)a(t)f(x,0,z)z+\rho(t)v(\eps/n)\theta(t)\eta(t)g(x,{0},z)
\end{cases}
\]
with $\hat x_{n,k}(s_{n,k})=S(s_{n,k})$ and $\hat z_{n,k}(s_{n,k})=P(s_{n,k})$. We conclude that $S(t,x_n)\le \hat x_{n,k}(t)$ and $P(t,x_n)\le \hat z_{n,k}(t)$, for each $t \in \,
]s_{n,k},t_{n,k}[$.
By~{N{6.5})}, given $\delta>0$, we have
\[
\left|x^*_{2,\eps/n}(t)-\hat x_{n,k}(t)\right| < \delta/2 \quad \text{and} \quad \left|z^*_{2,\eps/n}(t)-\hat z_{n,k}(t)\right| < \delta/2,
\]
for all sufficiently large $k$ (that depends on $n$). By continuity, for sufficiently large $n$ and all sufficiently large $k \ge K(n)$, we have
\[
\left|{x^*_{2}}(t)-\hat x_{n,k}(t)\right|\le\left|{x^*_{2}}(t)-x^*_{2,\eps/n}(t)\right|+\left|x^*_{2,\eps/n}(t)-\hat x_{n,k}(t)\right|
\le \delta.
\]
and
\[
\left|{z^*_{2}}(t)-\hat z_{n,k}(t)\right|\le\left|{z^*_{2}}(t)-z^*_{2,\eps/n}(t)\right|+\left|z^*_{2,\eps/n}(t)-\hat z_{n,k}(t)\right|
\le \delta.
\]
In particular, for sufficiently large $n$, all sufficiently large $k\ge K(n)$ and for $t \in \, ]s_{n,k(n)},t_{n,k(n)}[$, we have
\begin{equation}\label{eq:bound-sist-persist-P-strog-persist}
S(t) \le \hat x_{n,k}(t) \le {x^*_{2}}(t)+\delta \quad \text{and} \quad P(t) \le \hat z_{n,k}(t) \le {z^*_{2}}(t)+\delta.
\end{equation}
Similar computations show that, for sufficiently large $n$, all sufficiently large $k\ge K(n)$ and for $t \in \, ]s_{n,k(n)},t_{n,k(n)}[$, we obtain
\begin{equation}\label{eq:bound-sist-persist-S-strog-persist}
S(t) \ge \tilde x_{n,k}(t) \ge {x^*_{1}}(t)-\delta \quad \text{and} \quad P(t) \ge \tilde z_{n,k}(t) \ge {z^*_{1}}(t)-\delta.
\end{equation}
Notice that, for a given $\delta$, eventually considering a larger $n$, we can take the same $n$ and $k$ in~{\eqref{eq:bound-sist-persist-P-strog-persist}} and~{\eqref{eq:bound-sist-persist-S-strog-persist}}.

Given $l>0$, by~\eqref{eq:limite-t-s} we can choose
$T > 0$  such that $t_{n,k}-s_{n,k} > l\lambda$ for all $n \ge T$. Therefore, by~\eqref{eq:maj-eps-eps1a},~{\eqref{eq:bound-sist-persist-P-strog-persist}} and~{\eqref{eq:bound-sist-persist-S-strog-persist}},
and by the second equation
in~\eqref{eq:principal}, for $n \ge T$ and $k\ge K(n)$ we get
\[
\begin{split}
& \frac{{\rho^u}v(\eps/n^2)}{(1+\theta^u\eta^u)(1+\beta^u)}\\
&  = I(t_{n,k},x_n) \\
& \ge I(s_{n,k},x_n) \exp\left\{\int_{s_{n,k}}^{t_{n,k}} \beta(r)S(r)-\eta(r)g(S(r),I(r),P(r))-c(r) \dr\right\} \\
& \ge I(s_{n,k},x_n) \times\\
& \phantom{\ge} \times \exp\left\{\kappa l+\int_{s_{n,k}+\lfloor(t_{n,k}-s_{n,k})/\lambda\rfloor}^{t_{n,k}} \beta(r)({x^*_{1}}(r)-\delta)
-\eta(r)g({x^*_{1}}(r)+\delta, \rho^u v(\eps/n), {z^*_{2}}(r)-\delta)-c(r) \dr\right\} \\
& \ge \frac{{\rho^\ell} v(\eps/n)}{(1+\theta^u\eta^u)(1+\beta^u)}
\e^{\kappa l-\lambda(\beta^u\delta+\eta^u(g(({x^*_{1}})^u-\delta, \rho^u v(\eps/n),({z^*_{2}})^u+\delta)+c^u)}\\
& > \frac{{\rho^\ell}v(\eps/n)}{(1+\theta^u\eta^u)(1+\beta^u)} ,
\end{split}
\]
for sufficiently large $l$ (that requires that $T$ is sufficiently large). We conclude that
\[
\frac{\rho^u v(\eps/n^2)}{\rho^\ell v(\eps/n)}  {>} 1
\]
and this contradicts the fact that, by S6.2) and S6.3), we have
\[
\lim_{n \to +\infty} \frac{\rho^u v(\eps/n^2)}{\rho^\ell v(\eps/n)} = \lim_{n \to +\infty} \frac{2\rho^u v'(\eps/n^2)/n^3}{\rho^\ell v'(\eps/n)/n^2}
\le \lim_{n \to +\infty} \frac{2\rho^u {B}}{n\rho^\ell {A}}=0.
\]

We conclude that there is $m_1>0$ such that $\displaystyle \liminf_{t \to +\infty} I(t) > m_1$ and the result follows from~{S\ref{cond-3})}.
\end{proof}

{In~{\cite{Lu-Wang-Liu-DCDS-B-2018}}, the authors obtain extinction and persistence results for eco-epidemiological model with Crowley-Martin functional response. In the
extinction result the authors consider auxiliary equations different from~\eqref{eq:auxiliary-S(t)} and~\eqref{eq:auxiliary-P(t)} using some upper bound for $S$ and some lower
bound for $P$ related to the dimension of some positive invariant region that contains the omega limit of all solutions.
We will borrow and improve the idea of that paper in our context. To this purpose, we need to consider families of auxiliary equations.
We begin by noticing that, by the proof of Theorem~\ref{teo:main-extinction}, for that any solution $(S(t),I(t),P(t))$ of our problem with initial condition
$(S(t_0),I(t_0),P(t_0))=(S_0,I_0,P_0)$ we have $s^{1,\ell}(t)\le S(t)\le s^{1,u}(t)$
and $y^{1,\ell}(t)\le P(t)\le y^{1,u}(t)$, for all $t>0$ sufficiently large, where $s^{1,\ell}(t)=0$, $s^{1,u}(t)$ is the solution of~\eqref{eq:auxiliary-S(t)} with initial
condition $s^{1,u}(t_0)=S_0$, $y^{1,\ell}(t)$ is the solution of~\eqref{eq:auxiliary-P(t)} with initial condition $y^{1,\ell}(t_0)=P_0$ and $y^{1,u}(t)=L$, where $L$ is given in
condition~S\ref{cond-12}). Consider the equations:}
{
\begin{equation}\label{eq:auxiliary-S(t)-1}
s'=G(t,s)-a(t)f(s,L,y^{1,u}(t))y^{1,\ell}(t),
\end{equation}
and
\begin{equation}\label{eq:auxiliary-S(t)-2}
s'=G(t,s)-a(t)f(s,0,y^{1,\ell}(t))y^{1,u}(t)-\beta(t)sL,
\end{equation}
where $y^{1,\ell}(t)$ is a particular solution of~\eqref{eq:auxiliary-P(t)}.
For equations~\eqref{eq:auxiliary-S(t)-1} and~\eqref{eq:auxiliary-S(t)-2}, we assume the following:}
{
\begin{enumerate}[S$1$')]
\setcounter{enumi}{3}
\item \label{cond-9-1} Each solution $s(t)$ of~\eqref{eq:auxiliary-S(t)-1} (respectively \eqref{eq:auxiliary-S(t)-2}) with positive initial condition is bounded, bounded away
    from zero, and globally attractive on $]0,+\infty[$, that is $|s(t)-v(t)| \to 0$ as $t \to +\infty$ for each solution $v(t)$ of~\eqref{eq:auxiliary-S(t)-1} (respectively
    \eqref{eq:auxiliary-S(t)-2}) with positive initial condition.
\end{enumerate}
}
{
We also need to consider the equations
\begin{equation}\label{eq:auxiliary-P(t)-1}
y'=h(t,y)y+\gamma(t)a(t)f(s^{2,\ell}(t),L,y)y,
\end{equation}
and
\begin{equation}\label{eq:auxiliary-P(t)-2}
y'=h(t,y)y+\gamma(t)a(t)f(s^{2,u}(t),0,y)y+\theta(t)\eta(t)g(0,0,y)L,
\end{equation}
where $s^{2,u}(t)$ is a particular solution of~\eqref{eq:auxiliary-S(t)-2}.
For the family of equations~\eqref{eq:auxiliary-P(t)-1}, we assume the following:}
{
\begin{enumerate}[S$1$')]
\setcounter{enumi}{4}
\item \label{cond-10-1} Each fixed solution $y(t)$ of~\eqref{eq:auxiliary-P(t)-1} (respectively~\eqref{eq:auxiliary-P(t)-2}) with positive initial condition is bounded and
    globally attractive on $[0,+\infty)$.
\end{enumerate}
}
{Using the solutions of the systems above we can define the following number:
\begin{equation}\label{eq:cR0-per-linha}
\cR^{u,1}(\lambda)=\limsup_{t \to +\infty} \int_t^{t+\lambda} \beta(s)s^\sharp(s)-\eta(s)g(s^\sharp(s),0,y^\sharp(s))-c(s) \ds.
\end{equation}
where $s^\sharp(t)$ and $y^\sharp(t)$ are particular solutions, respectively, of~\eqref{eq:auxiliary-S(t)-1} and~\eqref{eq:auxiliary-P(t)-1} with positive initial conditions.
Notice that, according to our assumptions, it is easy to prove, with similar arguments to the ones in Proposition~\ref{teo:independence-of-R0-of-solutions}, that
$\cR^{u,1}(\lambda)$ is independent of the particular solutions considered.
\begin{theorem}\label{teo:main-extinction-2}
Assume that $g(S+I,0,P)\le g(S,I,P)$ and that conditions {S\ref{cond-1})} to {S\ref{cond-3})}, S\ref{cond-9-1}') and S\ref{cond-10-1}') hold. Assume further that $G(t,S)=\Lambda(t)-\mu(t)S$ or that $g$ does not depend on $I$. If there is $\lbd>0$ such that
$\cR^{u,1}(\lambda)<0$, then the infectives in system~\eqref{eq:principal} go to extinction.
\end{theorem}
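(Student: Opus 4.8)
The plan is to run the proof of Theorem~\ref{teo:main-extinction} almost verbatim, but to replace the coarse auxiliary equations~\eqref{eq:auxiliary-S(t)} and~\eqref{eq:auxiliary-P(t)} (which ignore the predation loss suffered by $S$ and the biomass gain of $P$) by the sharper equations~\eqref{eq:auxiliary-S(t)-1} and~\eqref{eq:auxiliary-P(t)-1}, and accordingly $s^*,y^*$ by $s^\sharp,y^\sharp$. The only genuinely new ingredient is that the a priori bounds recalled just before the statement, namely $0\le S(t)\le s^{1,u}(t)$ and $y^{1,\ell}(t)\le P(t)\le L$ for all large $t$, together with $I(t)\le L$, let us, through the monotonicity of $f$ and $g$ in S\ref{cond-3}), upgrade the crude differential inequalities $S'\le G(t,S)$ and $P'\ge h(t,P)P$ into inequalities whose comparison solutions are exactly $s^\sharp$ and $y^\sharp$. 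Since $s^\sharp\le s^*$ and $y^\sharp\ge y^*$, this produces the sharper threshold $\cR^{u,1}(\lambda)\le\cR^u(\lambda)$, so that $\cR^{u,1}(\lambda)<0$ is a weaker hypothesis than the one in Theorem~\ref{teo:main-extinction}.

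First I would use $\cR^{u,1}(\lambda)<0$ and the boundedness of $\beta,\eta$ to fix $\kappa,t_0,\eps_0>0$ with $\int_t^{t+\lambda}\beta(s)(s^\sharp(s)+\delta)-\eta(s)g(s^\sharp(s)+\delta,0,y^\sharp(s)-\delta)-c(s)\ds\le-\kappa$ for $t\ge t_0$ and $\delta\in\,]0,\eps_0]$, and record the crude bounds above, which need only the comparison principle and the boundedness in S\ref{cond-12}). The core step is the refined comparison. For the upper bound on $S$ I would drop the nonnegative term $\beta(t)SI$ and bound the predation from below: since $f$ is nonincreasing in its second and third variables and $I\le L$, $y^{1,\ell}(t)\le P\le y^{1,u}(t)$, one has $a(t)f(S,I,P)P\ge a(t)f(S,L,y^{1,u}(t))y^{1,\ell}(t)$, so $S'\le G(t,S)-a(t)f(S,L,y^{1,u}(t))y^{1,\ell}(t)$, which is the right-hand side of~\eqref{eq:auxiliary-S(t)-1}; the comparison principle and the global attractivity in S\ref{cond-9-1}') then give $S(t)\le s^\sharp(t)+\delta$ for large $t$. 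Symmetrically, dropping $\theta(t)\eta(t)g(S,I,P)I\ge0$ and inserting the refined lower bound $s^{2,\ell}(t)\le S(t)$ furnished by~\eqref{eq:auxiliary-S(t)-2} (again via S\ref{cond-9-1}')), the monotonicity of $f$ yields $P'\ge h(t,P)P+\gamma(t)a(t)f(s^{2,\ell}(t),L,P)P$, the right-hand side of~\eqref{eq:auxiliary-P(t)-1}, whence $P(t)\ge y^\sharp(t)-\delta$ by S\ref{cond-10-1}'). The nested dependence of these equations on the frozen forcing solutions $y^{1,\ell}$ and $s^{2,\ell}$ is absorbed, up to an arbitrarily small $\delta$, exactly as in Proposition~\ref{teo:independence-of-R0-of-solutions}, by global attractivity.

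With $S(t)\le s^\sharp(t)+\delta$ and $P(t)\ge y^\sharp(t)-\delta$ secured, the conclusion follows the pattern of Theorem~\ref{teo:main-extinction}. If $g$ does not depend on $I$, the monotonicity of $g$ gives at once $g(S,I,P)\ge g(s^\sharp(t)+\delta,0,y^\sharp(t)-\delta)$; if $G(t,S)=\Lambda(t)-\mu(t)S$, I would first upgrade the estimate to $S(t)+I(t)\le s^\sharp(t)+\delta$ and then combine $g(S+I,0,P)\le g(S,I,P)$ with the monotonicity of $g$ to obtain the same lower bound. In either case $I'\le[\beta(t)(s^\sharp(t)+\delta)-\eta(t)g(s^\sharp(t)+\delta,0,y^\sharp(t)-\delta)-c(t)]I$ for large $t$, and integrating over consecutive intervals of length $\lambda$ and using the displayed negativity gives $\liminf_{t\to+\infty}I(t)=0$; the weak-to-strong argument of Theorem~\ref{teo:main-extinction} then upgrades this to $\lim_{t\to+\infty}I(t)=0$.

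The main obstacle I anticipate is the bookkeeping that makes the refined comparisons legitimate, and in particular the bound $S(t)+I(t)\le s^\sharp(t)+\delta$ in the affine case: the coarse estimate $(S+I)'\le\Lambda(t)-\mu(t)(S+I)$ only yields the unrefined bound, so the predation term must be carried through the computation of $(S+I)'$ and matched to the right-hand side of~\eqref{eq:auxiliary-S(t)-1} using the monotonicity of $f$ (and the fact, already used in Theorem~\ref{teo:main-extinction}, that the infective death rate dominates the natural mortality). One must also check that the monotonicity hypotheses in S\ref{cond-3}) are precisely those needed to align each discarded or estimated term with its auxiliary equation, and that the chain of frozen solutions $\{y^{1,\ell},L\}\to\{s^\sharp,s^{2,\ell}\}\to y^\sharp$ closes consistently; once these comparisons are in place the remainder is routine.
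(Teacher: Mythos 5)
Your proposal follows essentially the same route as the paper's own (quite terse) proof: the paper likewise replaces the coarse comparisons of Theorem~\ref{teo:main-extinction} by the refined chain of differential inequalities — $S'\le G(t,S)-a(t)f(S,L,y^{1,u}(t))y^{1,\ell}(t)$ giving $S(t)\le s^\sharp(t)$, the lower bound $s^{2,\ell}(t)\le S(t)$ coming from~\eqref{eq:auxiliary-S(t)-2}, and then $P'\ge h(t,P)P+\gamma(t)a(t)f(s^{2,\ell}(t),L,P)P$ giving $P(t)\ge y^\sharp(t)$ — and then reruns the extinction argument of Theorem~\ref{teo:main-extinction} with $\cR^{u,1}(\lambda)$ in place of $\cR^{u}(\lambda)$. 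Your write-up is in fact more explicit than the paper's sketch (which ends with ``similar arguments to the ones in Theorem~\ref{teo:main-extinction} allow us to obtain the result''), and the bookkeeping issue you flag in the case $G(t,S)=\Lambda(t)-\mu(t)S$ — upgrading to $S+I\le s^\sharp+\delta$, which requires aligning the predation term with the right-hand side of~\eqref{eq:auxiliary-S(t)-1} via monotonicity of $f$ — is precisely the point the paper glosses over.
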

\begin{proof}
The proof consists in repeating the steps in the proof of Theorem~\ref{teo:main-extinction}, with the changes that we will describe below. In the first place, instead of the
bounds given by \eqref{eq:auxiliary-S(t)} and \eqref{eq:auxiliary-P(t)}, we use bounds obtained in the following way: letting $y^{1,\ell}(t)$ and $y^{2,u}(t)$ be the solutions
defined above, we know that
\begin{equation}\label{teo-eq-aux-1st-iteration-a}
S'\le G(t,S)-a(t)f(S,L,y^{1,u}(t))y^{1,\ell}(t)
\end{equation}
and
\begin{equation}\label{teo-eq-aux-1st-iteration-b}
S'\ge G(t,S)-a(t)f(S,0,y^{1,\ell}(t))y^{1,u}(t)-\beta(t)SL.
\end{equation}
Thus, using the solutions $s^{2,\ell}(t)$ and $s^{2,u}(t)$ respectively of~\eqref{teo-eq-aux-1st-iteration-a} and~\eqref{teo-eq-aux-1st-iteration-b}, we obtain
\begin{equation}\label{teo-eq-aux-1st-iteration-c}
P'\ge h(t,P)P+\gamma(t)a(t)f(s^{2,\ell}(t),L,P)P
\end{equation}
and
\begin{equation*}
P' \le h(t,P)P+\gamma(t)a(t)f(s^{2,u}(t),0,P)P+\theta(t)\eta(t)g(0,0,P)L.
\end{equation*}
The bounds in~\eqref{teo-eq-aux-1st-iteration-a} and~\eqref{teo-eq-aux-1st-iteration-c} allow us to conclude that, for sufficiently large $t>0$, we have $S(t)\le s^\sharp(t)$ and
$P(t)\ge y^\sharp(t)$, where $s^\sharp(t)$ and $y^\sharp(t)$ are respectively particular solutions of \eqref{eq:auxiliary-S(t)-1} and \eqref{eq:auxiliary-P(t)-1}; using the
solutions $s^\sharp(t)$ and $y^\sharp(t)$ and the number $\cR^{u,1}(\lambda)$ in~\eqref{eq:cR0-per-linha}, similar arguments to the ones in Theorem~\ref{teo:main-extinction}
allow us to obtain the result.
\end{proof}
}
{We note that the procedure in Theorem~\ref{teo:main-extinction-2} can be iterated to obtain new and (hopefully) better estimates of the region of extinction, as long as we
can still ensure that assumptions S\ref{cond-9-1}') and S\ref{cond-10-1}') still hold for the new equations. In fact all we have to do is the following: consider equations
\eqref{eq:auxiliary-S(t)-1} and \eqref{eq:auxiliary-S(t)-2} with $y^{1,\ell}(t)$ and $y^{1,u}(t)$ replaced by $y^{2,\ell}(t)$ and $y^{2,u}(t)$, the solutions of
\eqref{eq:auxiliary-P(t)-1} and \eqref{eq:auxiliary-P(t)-2}, and denote the solutions of those equations by $s^{3,\ell}(t)$ and $s^{3,u}(t)$; consider equations
\eqref{eq:auxiliary-P(t)-1} and \eqref{eq:auxiliary-P(t)-2} with $s^{2,\ell}(t)$ and $s^{2,u}(t)$ replaced by $s^{3,\ell}(t)$ and $s^{3,u}(t)$; replace $\cR^{u,1}(\lambda)$ by
$$\cR^{u,2}(\lambda)=\limsup_{t \to +\infty} \int_t^{t+\lambda} \beta(s)s^{\sharp\sharp}(s)-\eta(s)g(s^{\sharp\sharp}(s),0,y^\sharp(s))-c(s) \ds,$$
where $s^{\sharp\sharp}(t)$ and $y^{\sharp\sharp}(t)$ are particular solutions, respectively, of the new equations corresponding to~\eqref{eq:auxiliary-S(t)-1}
and~\eqref{eq:auxiliary-P(t)-1} with positive initial conditions. With these ingredients we obtain a new theorem on extinction. As long as the assumptions corresponding to
S\ref{cond-9-1}') and S\ref{cond-10-1}') still hold, we can repeat the process over and over again obtaining a sequence of theorems on extinction and (hopefully) improving the
estimates at each step.
}

\section{Examples}\label{examples}

{In this section we will apply Theorem~\ref{teo:main-extinction} and Theorem~\ref{teo:main-persistence} to some particular cases of model~\eqref{eq:principal}}.

\subsection{Models with no predation on uninfected preys}\label{example-1}
In this section we will consider a family of models with no predation on uninfected preys by letting $f \equiv 0$ and $g(S,I,P)=P$. This family generalises the family of models in~\cite{Niu-Zhang-Teng-AMM-2011} by allowing a very general form for the vital dynamics of predators and preys.  Thus,  we consider in this subsection the following model:
\begin{equation*}\label{eq:modelo-Niu-Zhang-Teng}
\begin{cases}
S'=G(t,S)-\beta(t)SI\\
I'=\beta(t)SI-\eta(t)PI-c(t)I\\
P'=h(t,P)P+\theta(t)\eta(t)PI
\end{cases}.
\end{equation*}
In this context,~\eqref{eq:cR0-ext} and~\eqref{eq:cR0-per} become
\begin{equation*}
\cR_{np}^\ell(\lambda)=\liminf_{t \to +\infty} \int_t^{t+\lambda} \beta(s)s^*(s)-\eta(s)y^*(s)-c(s) \ds
\end{equation*}
and
\begin{equation*}
\cR_{np}^u(\lambda)=\limsup_{t \to +\infty} \int_t^{t+\lambda} \beta(s)s^*(s)-\eta(s)y^*(s)-c(s) \ds.
\end{equation*}
where $s^*(t)$ and $y^*(t)$ are particular solutions, respectively, of~\eqref{eq:auxiliary-S(t)} and~\eqref{eq:auxiliary-P(t)}.

Under the hypotheses of Theorem~\ref{teo:main-extinction}, we obtain that if there is $\lbd>0$ such that $\cR^u_{np}(\lambda)<0$ then the infectives in system~\eqref{eq:modelo-Niu-Zhang-Teng} go to extinction, and under the hypotheses of Theorem~\ref{teo:main-persistence}, we conclude that if there is $\lbd>0$ such that $\cR^\ell_{np}(\lambda)>0$ then the infectives in system~\eqref{eq:modelo-Niu-Zhang-Teng} are uniform strong persistent.

As we already mentioned, model~\eqref{eq:modelo-Niu-Zhang-Teng} includes the model discussed in~\cite{Niu-Zhang-Teng-AMM-2011} as the particular case where $G(t,S)=\Lambda(t)-\mu(t)S$ and $h(t,P)=b(t)-r(t)P$, with $\Lambda$, $\mu$, $r$ and $b$ nonnegative, continuous and bounded functions satisfying:
\[
\liminf_{t \to +\infty} \int_t^{t+\omega_1} \Lambda(s)\ds>0, \quad \quad
\liminf_{t \to +\infty} \int_t^{t+\omega_2} \mu(s)\ds>0,
\]
\[
\liminf_{t \to +\infty} \int_t^{t+\omega_3} r(s)\ds>0 \quad \quad \text{and} \quad \quad
\liminf_{t \to +\infty} \int_t^{t+\omega_4} b(s)\ds>0,
\]
for some constants $w_i>0$, $i=1,\ldots,4$:
\begin{equation}\label{eq:modelo-Niu-Zhang-Teng-2}
\begin{cases}
S'=\Lambda(t)-\mu(t)S-\beta(t)SI\\
I'=\beta(t)SI-\eta(t)PI-c(t)I\\
P'=(b(t)-r(t)P)P+\theta(t)\eta(t)PI
\end{cases}.
\end{equation}
Note that, for the model in~\eqref{eq:modelo-Niu-Zhang-Teng-2}, condition S\ref{cond-1}) is assumed, condition~S\ref{cond-3}) is immediate from the particular forms of the functions $g$ and $h$, conditions~S\ref{cond-9}) and S\ref{cond-10}) follow from Lemmas 1 and 3 in~\cite{Niu-Zhang-Teng-AMM-2011} and condition~S\ref{cond-11}) is a consequence of the fact that, in this setting, systems~\eqref{eq:auxiliary-system-SP-1} and~\eqref{eq:auxiliary-system-SP-2} are uncoupled and small perturbations of each of the equations in those systems is globally asymptotically stable by Lemmas 1 and 3 in~\cite{Niu-Zhang-Teng-AMM-2011}. Finally, condition~S\ref{cond-12}) follows from Theorem 1 in~\cite{Niu-Zhang-Teng-AMM-2011}. We also note that $\cR_{np}^u(\lambda)$ and $\cR_{np}^\ell(\lambda)$ coincide with the corresponding numbers in~\cite{Niu-Zhang-Teng-AMM-2011}.

Another possible choice for the functions $h$ and $G$ is $h(t,P)=-(\delta_1(t)+\delta_2(t)P)$, with $\delta_1$ and $\delta_2$ continuous and nonnegative functions and $G(t,S)=k(t,S)S$ with $k$ a continuous and bounded function satisfying the conditions: $\partial k/\partial S(t,s)<0$, for every $t,s\ge 0$; $k(t,0)>0$ for all $t \ge 0$; there is $S_1(t)>0$ such that $k(t,S_1(t))=0$, for every $t \ge 0$. This choice makes the underlying predator-uninfected prey subsystem in model~\eqref{eq:modelo-Niu-Zhang-Teng} correspond to the model studied in section 3 of~\cite{Garrione-Rebelo-NARWA-2016} with the function $f \equiv 0$.
System~\eqref{eq:modelo-Niu-Zhang-Teng} becomes in this case:
\begin{equation*}
\begin{cases}
S'=k(t,S)S-\beta(t)SI\\
I'=\beta(t)SI-\eta(t)PI-c(t)I\\
P'=-(\delta_1(t)+\delta_2(t)P)P+\theta(t)\eta(t)PI
\end{cases}.
\end{equation*}
Notice that the study of the function $k(t,S)$ in~\cite{Garrione-Rebelo-NARWA-2016} allow us to conclude easily that conditions~S\ref{cond-1}) to~S\ref{cond-10}) are satisfied for this model. Condition~S\ref{cond-11}) is a consequence of the fact that systems~\eqref{eq:auxiliary-system-SP-1} and~\eqref{eq:auxiliary-system-SP-2} are uncoupled and small perturbations of each of the equations in those systems is globally asymptotically stable (the global asymptotic stability of the first equation is consequence of Lemma 3.1 in~\cite{Garrione-Rebelo-NARWA-2016} and the global asymptotic stability of the second equation is trivial).

\,

To do some simulation, in this scenario we assumed that $G(t,S)=(0.7-0.6S)S$; $\beta(t)=\beta_0(1+0.7\cos(2\pi t))$; $\eta(t)=0.7(1+0.7\cos(\pi+2\pi t))$; $c(t)=0.1$; $h(t,P)=-0.2-0.3P$; $\theta(t)=0.9$. We obtain the model:
\begin{equation*}
\begin{cases}
S'=(0.7-0.6S)S-\beta_0(1+0.7\cos(2\pi t))SI\\
I'=\beta_0(1+0.7\cos(2\pi t))SI-0.7(1+0.7\cos(\pi+2\pi t))PI-0.1I\\
P'=(-0.2-0.3P)P+0.63(1+0.7\cos(\pi+2\pi t))PI
\end{cases},
\end{equation*}

When $\beta_0=0.01$ we obtain $\cR^u= -0.15<0$ and we conclude that we have extinction (figure~\ref{fig_exe1_extincao}). When $\beta_0=0.3$ we obtain  $\cR^\ell=1.3>0$ and we conclude that the infectives are uniform strong persistent (figure~\ref{fig_exe1_persistencia}).

We considered the following initial conditions at $t=0$: $(S_0,I_0,P_0)=(1,0.5,0.1)$, $(S_0,I_0,P_0)=(0.1,0.2,1)$ and $(S_0,I_0,P_0)=(0.5,0.5,0.5)$. 

\begin{figure}[H]
  \begin{minipage}[b]{.32\linewidth}
    \includegraphics[width=\linewidth]{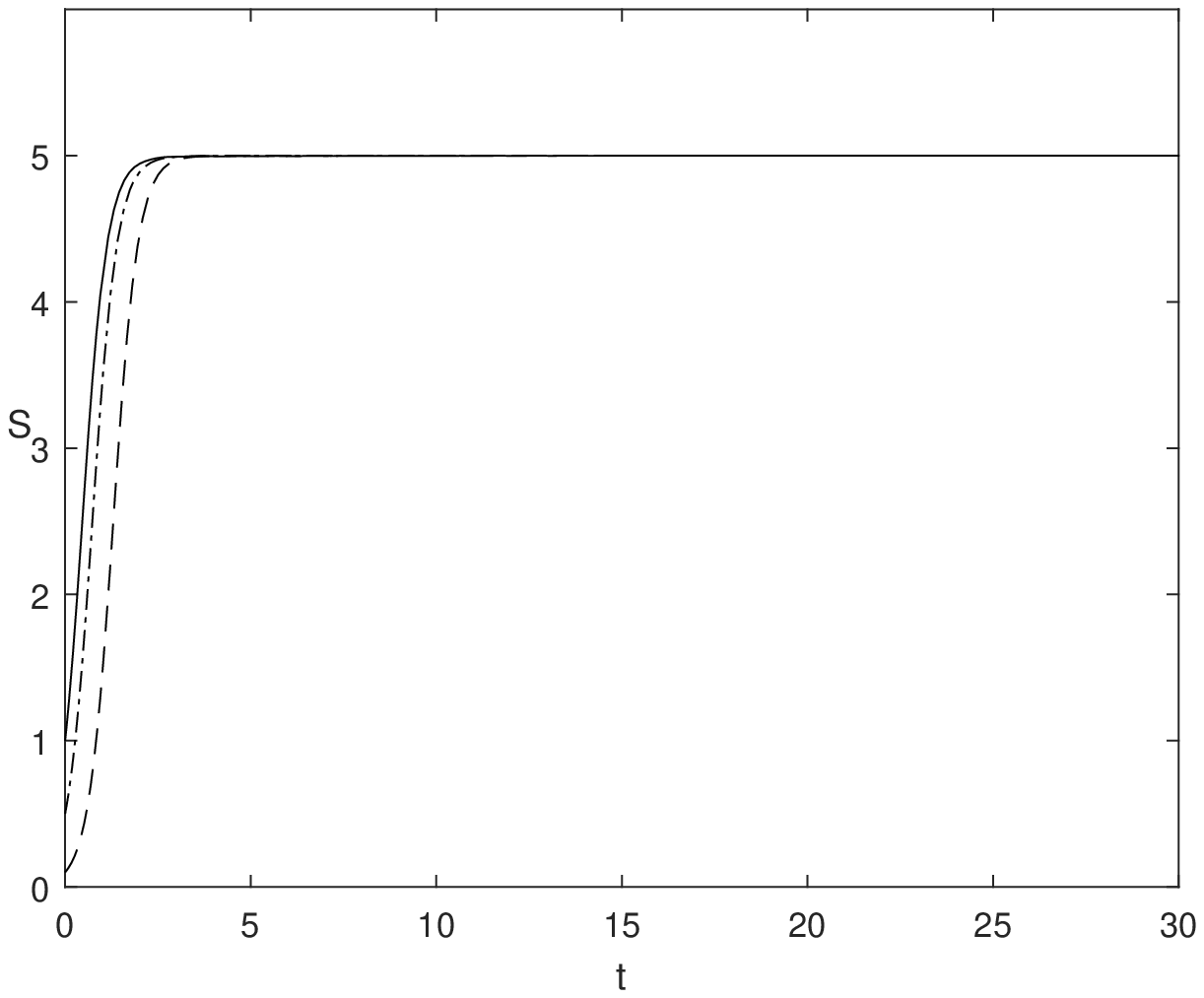}
  \end{minipage}
  \begin{minipage}[b]{.32\linewidth}
        \includegraphics[width=\linewidth]{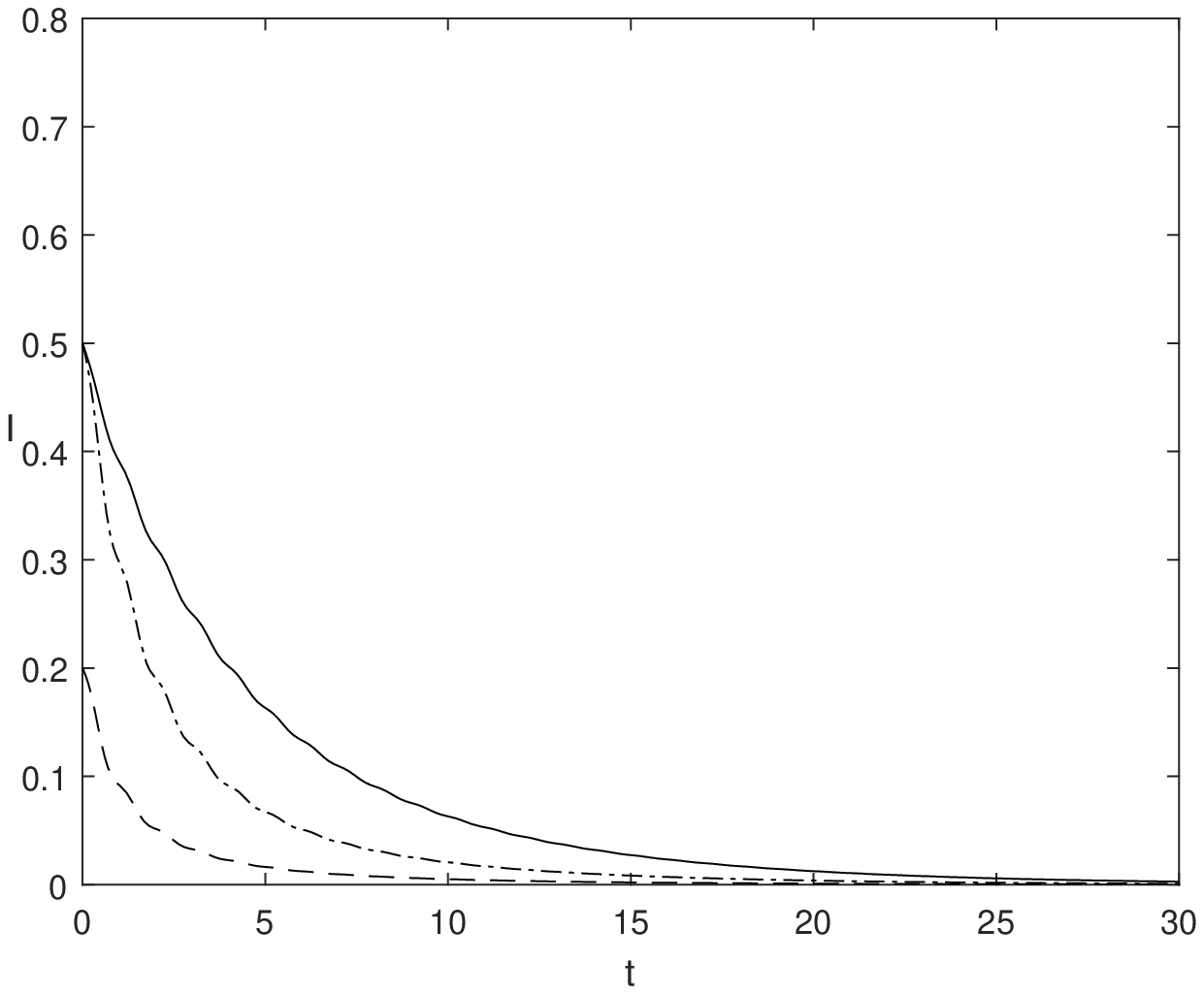}
  \end{minipage}
  \begin{minipage}[b]{.32\linewidth}
        \includegraphics[width=\linewidth]{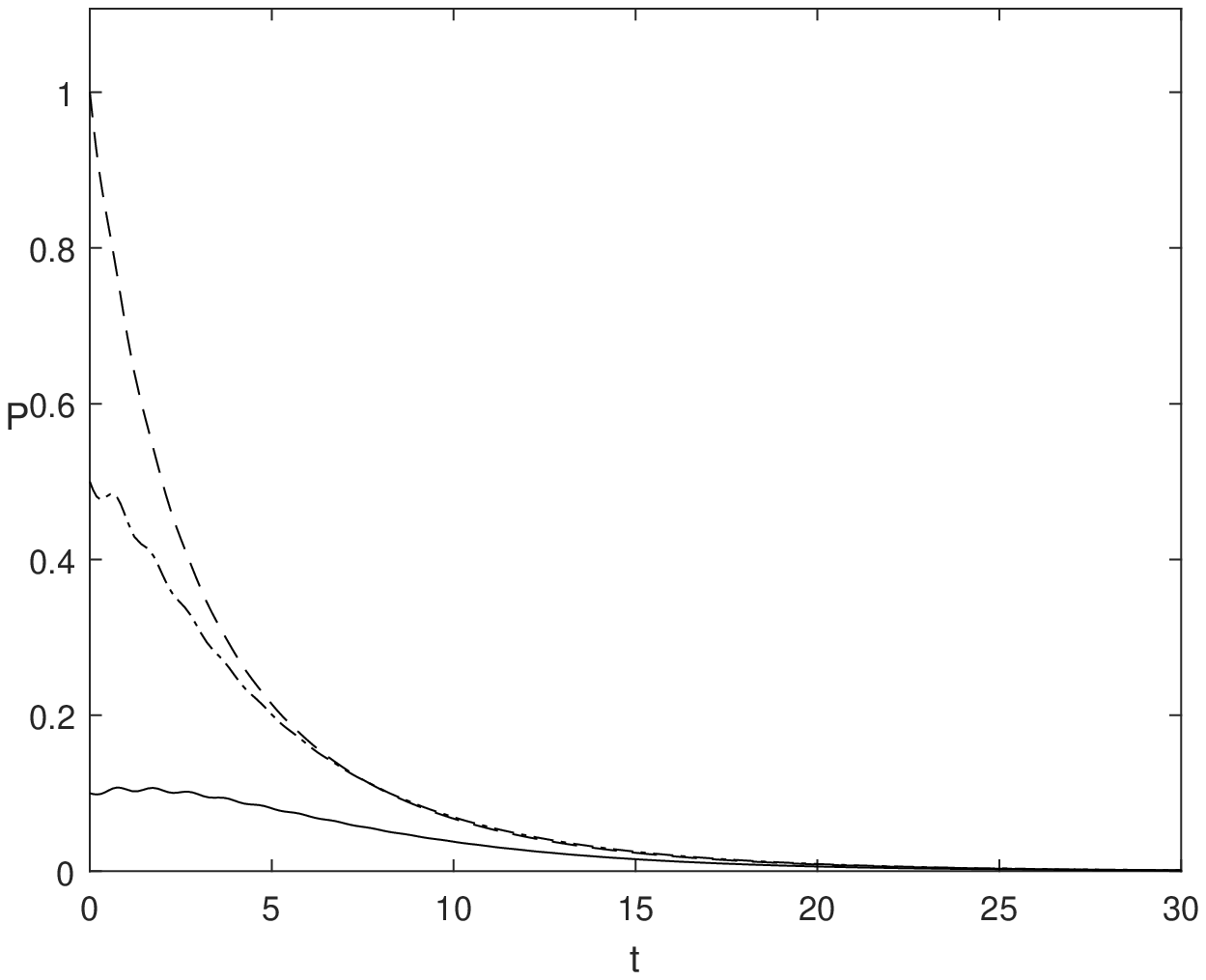}
  \end{minipage}
    \caption{Extinction: $\beta_0=0.01$.}
      \label{fig_exe1_extincao}
\end{figure}

\begin{figure}[H]
  \begin{minipage}[b]{.32\linewidth}
    \includegraphics[width=\linewidth]{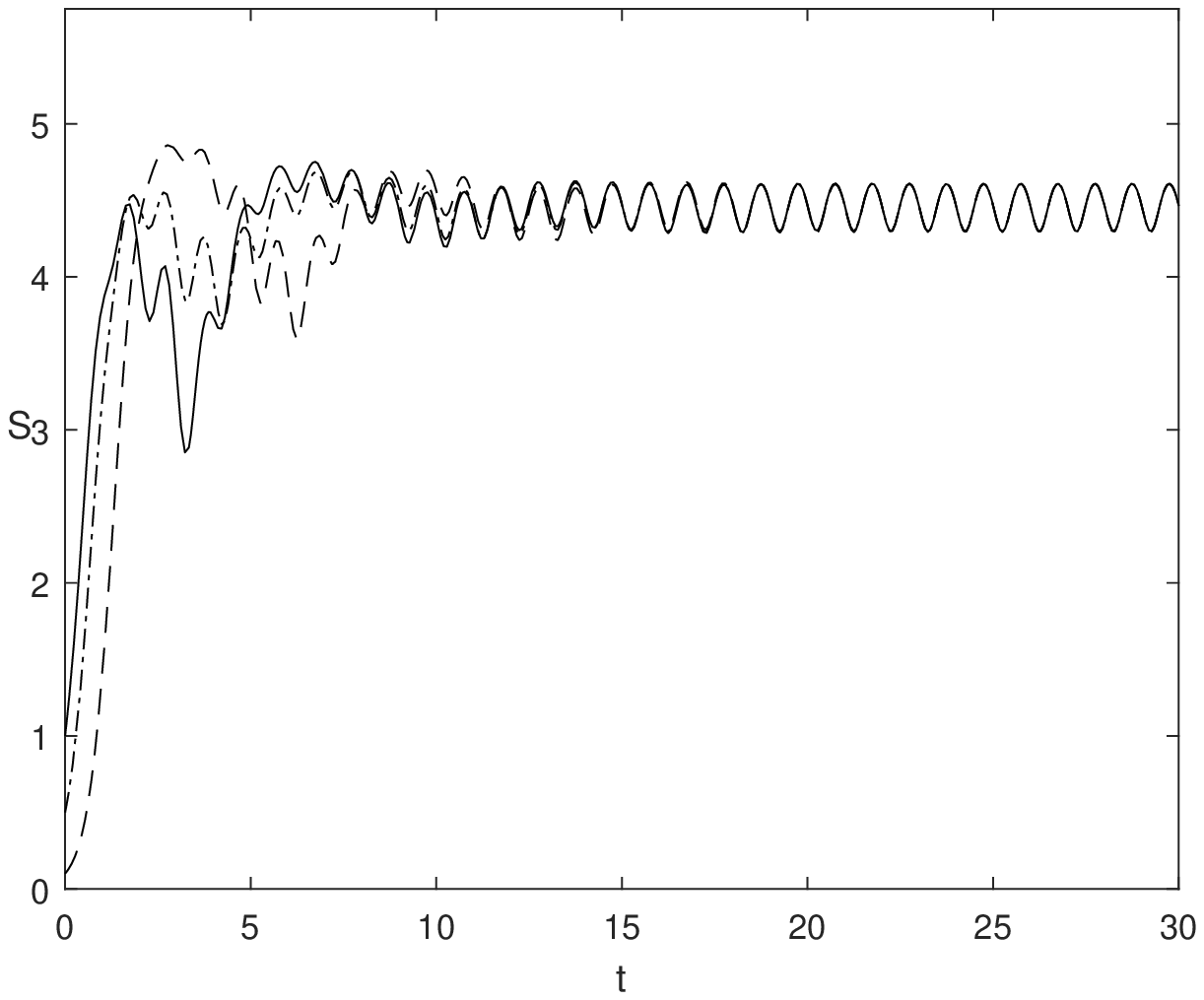}
  \end{minipage}
  \begin{minipage}[b]{.32\linewidth}
        \includegraphics[width=\linewidth]{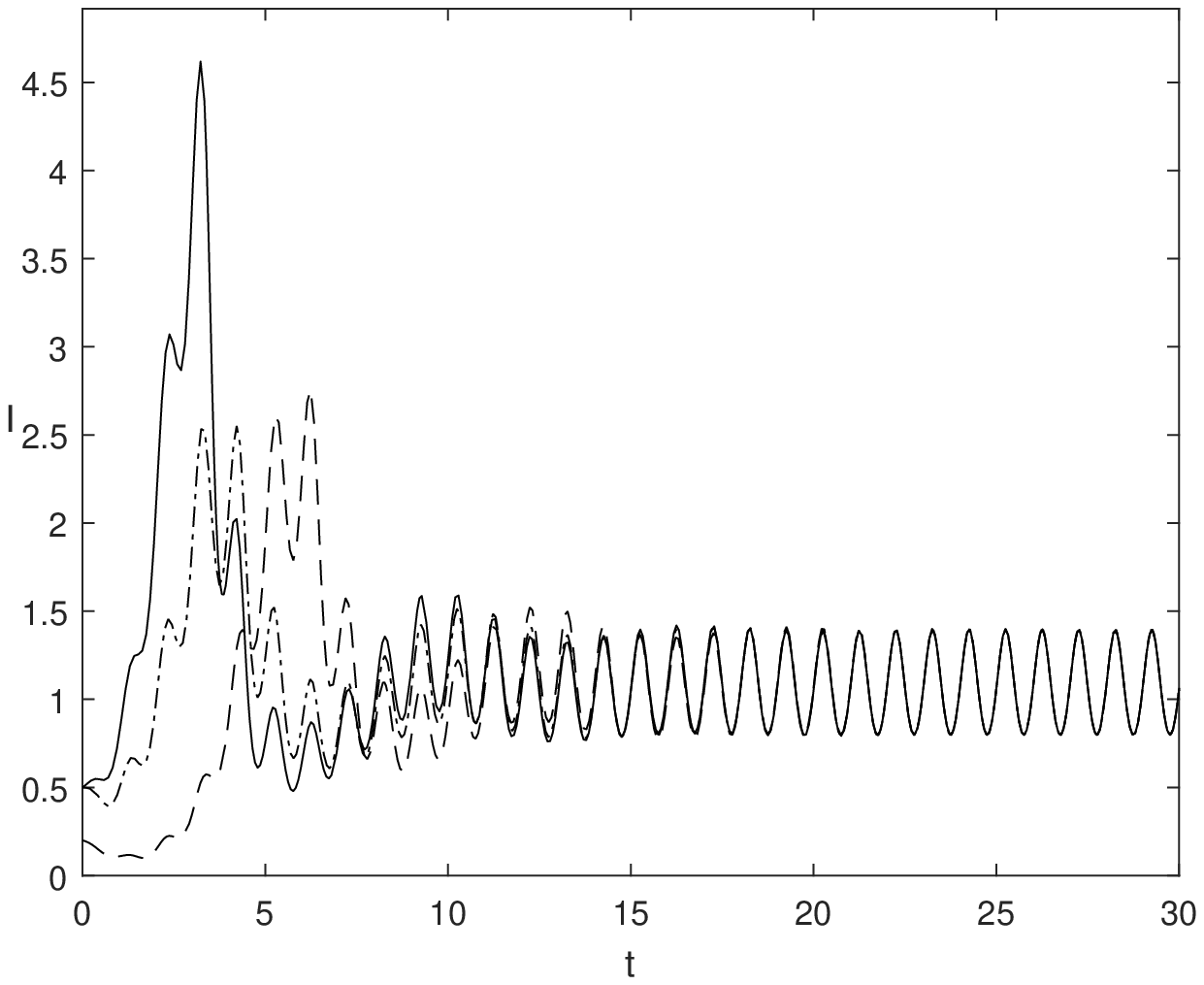}
  \end{minipage}
  \begin{minipage}[b]{.32\linewidth}
        \includegraphics[width=\linewidth]{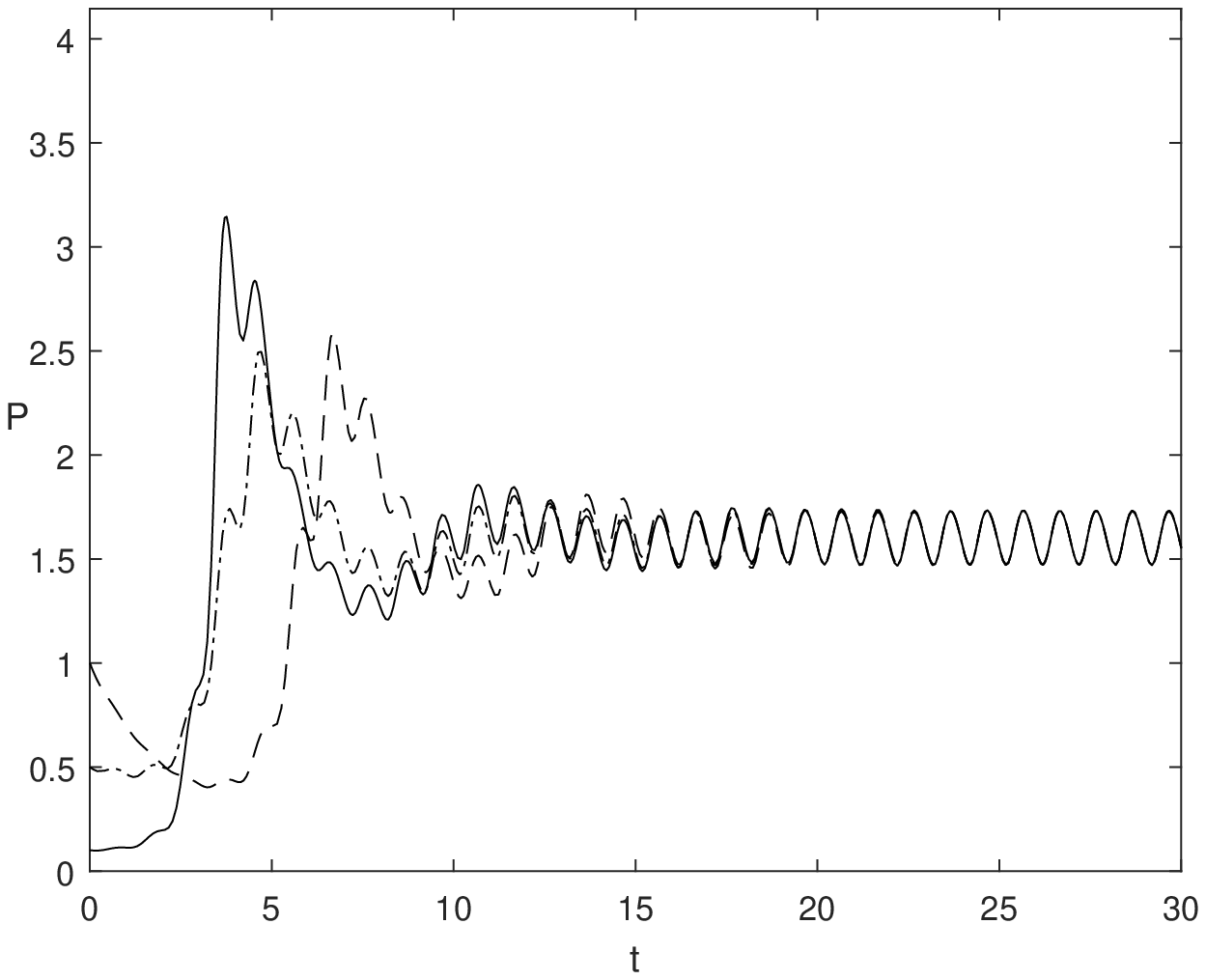}
  \end{minipage}
    \caption{Uniform strong persistence: $\beta_0=0.3$.}
      \label{fig_exe1_persistencia}
\end{figure}

\subsection{Periodic coefficients}
In this subsection we consider a family of models with periodic parameters and predation on uninfected preys that, in general, is not included in the general family of models
considered in~\cite{Lu-Wang-Liu-DCDS-B-2018}. For periodic models, the thresholds become easier to deal with.

Assume that there is $\omega >0$ such that all parameters in~\eqref{eq:principal} are $\omega$-periodic functions. In this case,~\eqref{eq:cR0-ext} and~\eqref{eq:cR0-per}
become, respectively,
\[
\cR^\ell(\omega)=\int_0^{\omega} \beta(s){x^*_{1}}(s)-\eta(s)g({x^*_{1}}(s),0,{z^*_{2}}(s))-c(s) \ds,
\]
and
\[
\cR^u(\omega)= \int_0^{\omega} \beta(s)s^*(s)-\eta(s)g({s^*}(s),0,{y^*}(s))-c(s) \ds.
\]
Thus
\[
\cR^\ell(\omega)>0 \quad \Leftrightarrow \quad \frac{\overline{\beta {x^*_{1}}}}{\overline{\eta g({x^*_{1}},0,{z^*_{2}})}+\overline{c}}>1
\]
and
\[
\cR^u(\omega)<0 \quad \Leftrightarrow \quad \frac{\overline{\beta s^*}}{\overline{\eta g(s^*,0,y^*)}+\overline{c}}<1.
\]
where $s^*(t)$ and $y^*(t)$ are particular solutions, respectively, of~\eqref{eq:auxiliary-S(t)} and~\eqref{eq:auxiliary-P(t)}, and {${x^*_{1}}(t)$ and ${z^*_{2}}(t)$ still denote any particular solution of first and second equations in systems~\eqref{eq:auxiliary-system-SP-1} and~\eqref{eq:auxiliary-system-SP-2}, respectively, with positive initial conditions}. Define
\[
\cR^\ell_{per}=\frac{\overline{\beta {x^*_{1}}}}{\overline{\eta g({x^*_{1}},0,{z^*_{2}})}+\overline{c}}
\quad \quad \text{and}
\quad \quad
\cR^u_{per}=\frac{\overline{\beta s^*}}{\overline{\eta g(s^*,0,y^*)}+\overline{c}}.
\]
Under the hypotheses of Theorem~\ref{teo:main-extinction}, we have that if $\cR^u_{per}<1$ then the infectives in model~\eqref{eq:principal} with periodic coefficients go to extinction, and
under the hypotheses of Theorem~\ref{teo:main-persistence}, if $\cR^\ell_{per}>1$ then the infectives  in model~\eqref{eq:principal} with periodic coefficients are uniform strong persistent.

Note that the corollaries in~\cite{Niu-Zhang-Teng-AMM-2011}, concerning the periodic case, are particular cases of the corollaries above.
In fact, in~\cite{Niu-Zhang-Teng-AMM-2011} we have $f \equiv 0$ and in this case, as argued in the previous section, $(s^*(t),y^*(t))$ is a particular solution
of~\eqref{eq:uninfected-system}, condition {S\ref{cond-1})} is assumed, condition~{S\ref{cond-3})} is immediate, conditions~~{S\ref{cond-12})} to
{S\ref{cond-11})} follow from results in~\cite{Niu-Zhang-Teng-AMM-2011}. Thus, when $f\equiv 0$, we get similar thresholds to the ones in the mentioned paper:
$$\cR^\ell_{per}=\cR^u_{per}=\frac{\overline{\beta s^*}}{\overline{\eta y^*}+\overline{c}}.$$
This threshold can also be obtained using the procedures in~\cite{Rebelo-Margheri-Bacaer-JMB-2012,Wang-Zhao-JDDE-2008}.

We will focus now on a particular models with a function $G$ that is different from the corresponding function in~\cite{Lu-Wang-Liu-DCDS-B-2018}. We consider the following setting:
$G(t,S)=(\Lambda-\mu S)S$; $a(t)=a$; $f(S,I,P)=S$; $g(S,I,P)=P$; $h(t,P)=b-rP$; $\gamma(t)=\gamma$. We obtain the model:
\begin{equation}\label{eq:Periodic-first-scenario}
\begin{cases}
S'=(\Lambda-\mu S)S- a SP-\beta(t)SI\\
I'=\beta(t)SI-\eta(t)PI-c(t)I\\
P'=(b-rP)P+ \gamma a SP+\theta(t)\eta(t)PI
\end{cases},
\end{equation}
For this model, condition {S\ref{cond-1})} is assumed, condition~{S\ref{cond-3})} is immediate from the particular forms of the functions $g$ and $h$,
conditions~{S\ref{cond-9})} and {S\ref{cond-10})} hold for our particular functions as already discussed in section~\ref{example-1}. {In this context, an endemic
equilibrium for \eqref{eq:auxiliary-system-SP-2} is
$
\left(\Lambda/\mu,\hat{z}_\epsilon\right),
$
with $\hat{z}_\epsilon=({b\mu+a\gamma\Lambda+\epsilon\mu})/{\mu r}$, and  the endemic
equilibrium for \eqref{eq:auxiliary-system-SP-1} exists if $\Lambda r>ab+a\gamma\Lambda/\mu$:
$$
\left(\frac{\hat\Lambda}\mu,\frac{b\mu+a\gamma\hat\Lambda}{\mu r}\right),
$$
with $\hat\Lambda=\Lambda-a\hat z_\epsilon-\epsilon$.
}
These  subsystems can be discussed using~\cite{Goh-AN-1976}. In fact, the global asymptotic stability result proved in section 3 of~\cite{Goh-AN-1976} implies that, if
{$\Lambda r>ab+a\gamma\Lambda/\mu$}, condition~{S\ref{cond-11})} is satisfied. Finally, condition~{S\ref{cond-12})} is consequence of the following lemma:
\begin{lemma}
There is a bounded region that contains the $\omega$-limit of all orbits of~\eqref{eq:Periodic-first-scenario}.
\end{lemma}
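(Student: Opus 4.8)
The plan is to prove the lemma by establishing that the system is \emph{uniformly dissipative}, estimating the three coordinates in a carefully chosen order rather than through a single Lyapunov function. The reason for the ordered approach is that the infected class $I$ carries no self-limiting quadratic term, so a naive weighted total-biomass combination $\gamma S+wI+P$ cancels the conversion terms $\gamma a SP$ and (for $w=\gamma$) the incidence term $\beta(t)SI$, but always leaves behind a bilinear term $(\theta(t)-\gamma)\eta(t)PI$ that cannot be absorbed by the available $-rP^2$ and linear-in-$I$ dissipation. Throughout I would first record that the nonnegative octant $(\R_0^+)^3$ is positively invariant, since each coordinate's derivative vanishes when that coordinate is zero; hence $S,I,P\ge 0$ for all $t$.

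First I would bound the susceptible prey. Discarding the nonnegative predation and incidence losses in the first equation gives $S'\le(\Lambda-\mu S)S$, so by comparison with the logistic equation $\limsup_{t\to+\infty}S(t)\le \Lambda/\mu=:M_S$; in particular $S(t)\le M_S+\eps$ for all large $t$. Second, I would control the total prey $U=S+I$: adding the first two equations cancels $\beta(t)SI$ and yields
\[
U'=\Lambda S-\mu S^2-a SP-\eta(t)PI-c(t)I\le \frac{\Lambda^2}{4\mu}-c^\ell I,
\]
after dropping the nonpositive predation terms and completing the square. Using $I=U-S\ge U-(M_S+\eps)$ for large $t$, this becomes a scalar dissipative inequality $U'\le K-c^\ell U$ for a constant $K$, whence $\limsup_{t\to+\infty}U(t)\le K/c^\ell=:M_U$, provided the infected-prey removal rate is bounded away from zero, $c^\ell>0$.

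This second step is the crux and the main obstacle: it is precisely here that the damping needed for $I$, which has no intrinsic saturation, is borrowed jointly from the prey's logistic self-limitation $-\mu S^2$ and from the removal term $c(t)I$; the estimate fails without some positive lower bound on that removal. Third, with $S$ and $I$ now bounded above (say $S(t),I(t)\le M_U+\eps$ for large $t$) the predator equation reduces to $P'\le \kappa P-rP^2$ with $\kappa=b+\gamma a M_S+\theta^u\eta^u M_U$ constant, so comparison with the logistic equation gives $\limsup_{t\to+\infty}P(t)\le \kappa/r=:M_P$. Collecting the three estimates, every solution eventually enters, and hence every $\omega$-limit is contained in, the bounded region $\{(S,I,P)\in(\R_0^+)^3:\ S+I\le M_U,\ P\le M_P\}$, which proves the lemma. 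The only delicate point is the assumption $c^\ell>0$ (satisfied in the simulations, where $c\equiv 0.1$); should one wish to avoid it, the predator's positive intrinsic growth $b+\gamma aS-rP$ near $S\approx\Lambda/\mu$ forces $P$ to persist, and the predation term $\eta(t)PI$ then supplies the missing damping for $I$, at the cost of a considerably more involved argument.
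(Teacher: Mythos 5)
Your proposal is correct and follows essentially the same route as the paper's own proof: a logistic comparison for $S$, then adding the first two equations and combining the bound $\sup_S(\Lambda-\mu S)S\le\Lambda^2/(4\mu)$ with the dissipation $-c^\ell(S+I)$ (your substitution $I=U-S$ with $S$ already bounded is the same trick, written slightly differently) to bound $S+I$, and finally a logistic-type comparison $P'\le(\kappa-rP)P$ for the predator. Your one genuine addition is making explicit the hypothesis $c^\ell>0$, which the paper uses silently when dividing by $c^\ell$.
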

\begin{proof}
Let $\eps>0$. Since, by the first equation in~\eqref{eq:Periodic-first-scenario}, $S' \le (\Lambda-\mu S)S$, we conclude that
\begin{equation}\label{eq:example-2-1}
S(t)\le \frac{\Lambda}{\mu} + \eps,
\end{equation}
for all $t$ sufficiently large. Additionally, we get
\begin{equation}\label{eq:example-2-2}
\sup_{S \in \mathbb{R}} (\Lambda-\mu S)S \le \left(\Lambda-\frac{\mu\Lambda}{2\mu}\right)\frac{\Lambda}{2\mu}=\frac{\Lambda^2}{4\mu}.
\end{equation}
Adding the first two equations in~\eqref{eq:Periodic-first-scenario} and using~\eqref{eq:example-2-1} and~\eqref{eq:example-2-2} we have, for all $t$ sufficiently large,
\begin{equation}\label{eq:example-2-3}
\begin{split}
(S+I)'
& =(\Lambda-\mu S)S-c(t)I\\
& \leqslant \frac{\Lambda^2}{4\mu}+c(t)S-c(t)(S+I)\\
& \leqslant \frac{\Lambda^2}{4\mu}+c^u \frac{\Lambda}{\mu} + c^u \eps -c^\ell(S+I).
\end{split},
\end{equation}
Since $\eps>0$ is arbitrary, we conclude that
\begin{equation*}
\limsup_{t \to +\infty} (S+I)(t) \le \frac{1}{c^\ell}\left(\frac{\Lambda^2}{4\mu}+c^u \frac{\Lambda}{\mu}\right):=A.
\end{equation*}
Finally, by the third equation in~\eqref{eq:Periodic-first-scenario} and~\eqref{eq:example-2-3}, given $\eps>0$, we get
\begin{equation}\label{Eq:2.K}
\begin{split}
P'
& =(b-rP)P+ \gamma a SP+\theta(t)\eta(t)PI\\
& \le \left(b+\gamma a A+\theta^u\eta^u A - rP \right)P,
\end{split}
\end{equation}
for sufficiently large $t$. Thus,
\[
\limsup_{t \to +\infty} P(t) \leqslant \frac{1}{r}\left(b+\gamma a A+\theta^u\eta^u A\right):=B.
\]
Equations~\eqref{eq:example-2-3} and~\eqref{Eq:2.K} show that the region
$$\{(S,I,P) \in \R^3: 0 \le S+I \le A \ \text{and} \ 0 \le P \le B\}$$
contains the $\omega$-limit of any orbit. 
\end{proof}

\,

To do some simulation, in this scenario we assumed that $G(t,S)=(0.7-0.6S)S$; $a=0.9$; $\beta(t)=\beta_0(1+0.7\cos(2\pi t))$; $\eta(t)=0.7(1+0.7\cos(\pi+2\pi t))$; $c(t)=0.1$; $b=0.2$; $r=0.6$; $\gamma=0.1$; $\theta(t)=0.9$. We obtain the model:
\begin{equation*}
\begin{cases}
S' =(0.7-0.6S)S-0.9SP-\beta_0(1+0.7\cos(2\pi t))SI\\
I' = \beta_0(1+0.7\cos(2\pi t))SI-0.7(1+0.7\cos(\pi+2\pi t))PI-0.1I\\
P' = (0.2-0.6P)P+0.09{SP}+0.63(1+0.7\cos(\pi+2\pi t))PI
\end{cases}.
\end{equation*}
When $\beta_0=0.1$ we obtain $\cR^u\approx -0.217<0$ ($\cR^u_{per}\approx 0.35<1$) and we conclude that we have extinction (figure~\ref{fig_exe2_extincao}). When $\beta_0=0.8$ we obtain  $\cR^\ell\approx 0.167>0$ ($\cR^\ell_{per}\approx 1.483>1$) and we conclude that the infectives are uniform strong persistent (figure~\ref{fig_exe2_persistencia}).

We considered the following initial conditions at $t=0$: $(S_0,I_0,P_0)=(1,0.5,0.1)$, $(S_0,I_0,P_0)=(0.1,0.2,1)$ and $(S_0,I_0,P_0)=(0.5,0.5,0.5)$. 

\begin{figure}[H]
  \begin{minipage}[b]{.32\linewidth}
    \includegraphics[width=\linewidth]{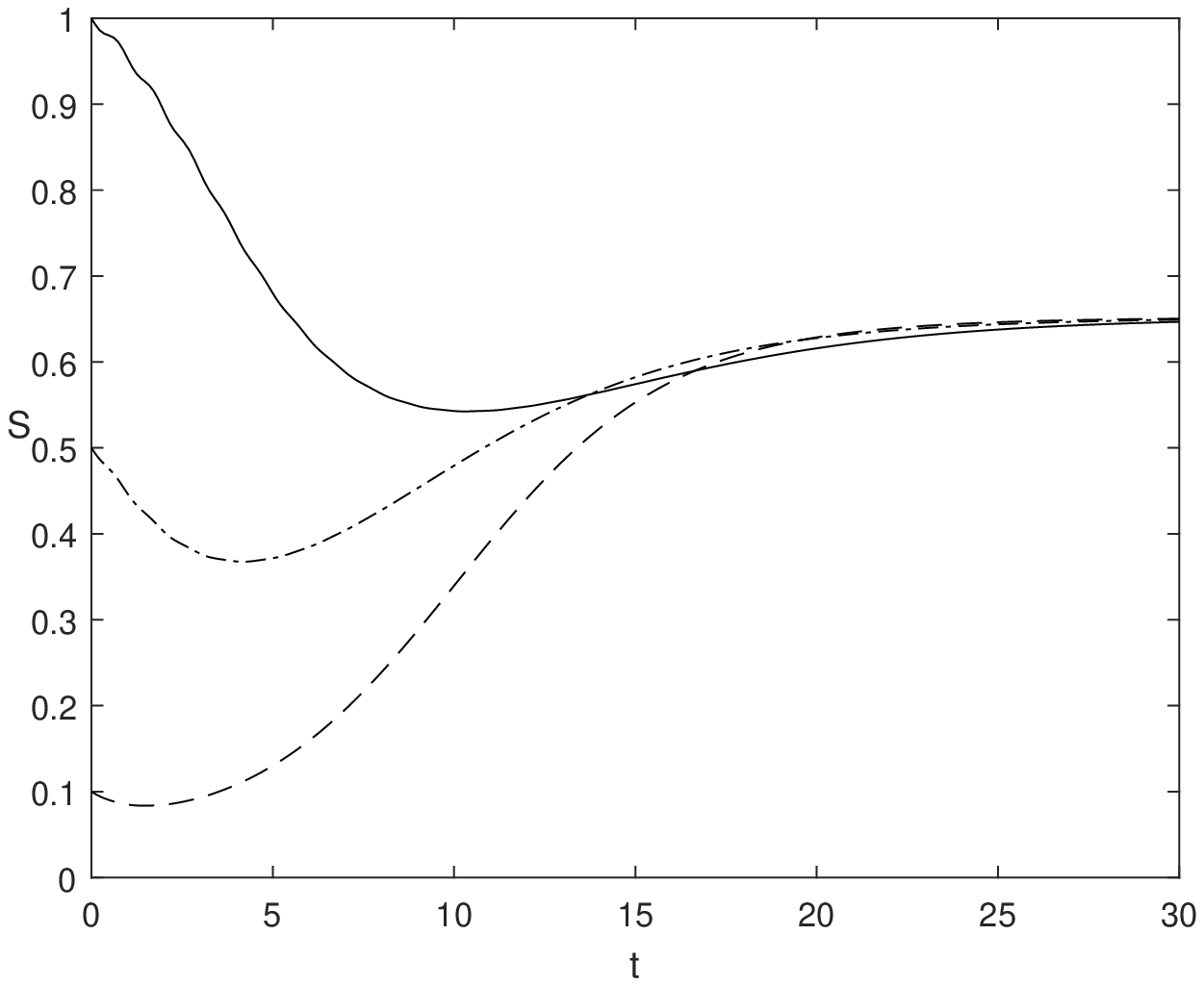}
  \end{minipage}
  \begin{minipage}[b]{.32\linewidth}
        \includegraphics[width=\linewidth]{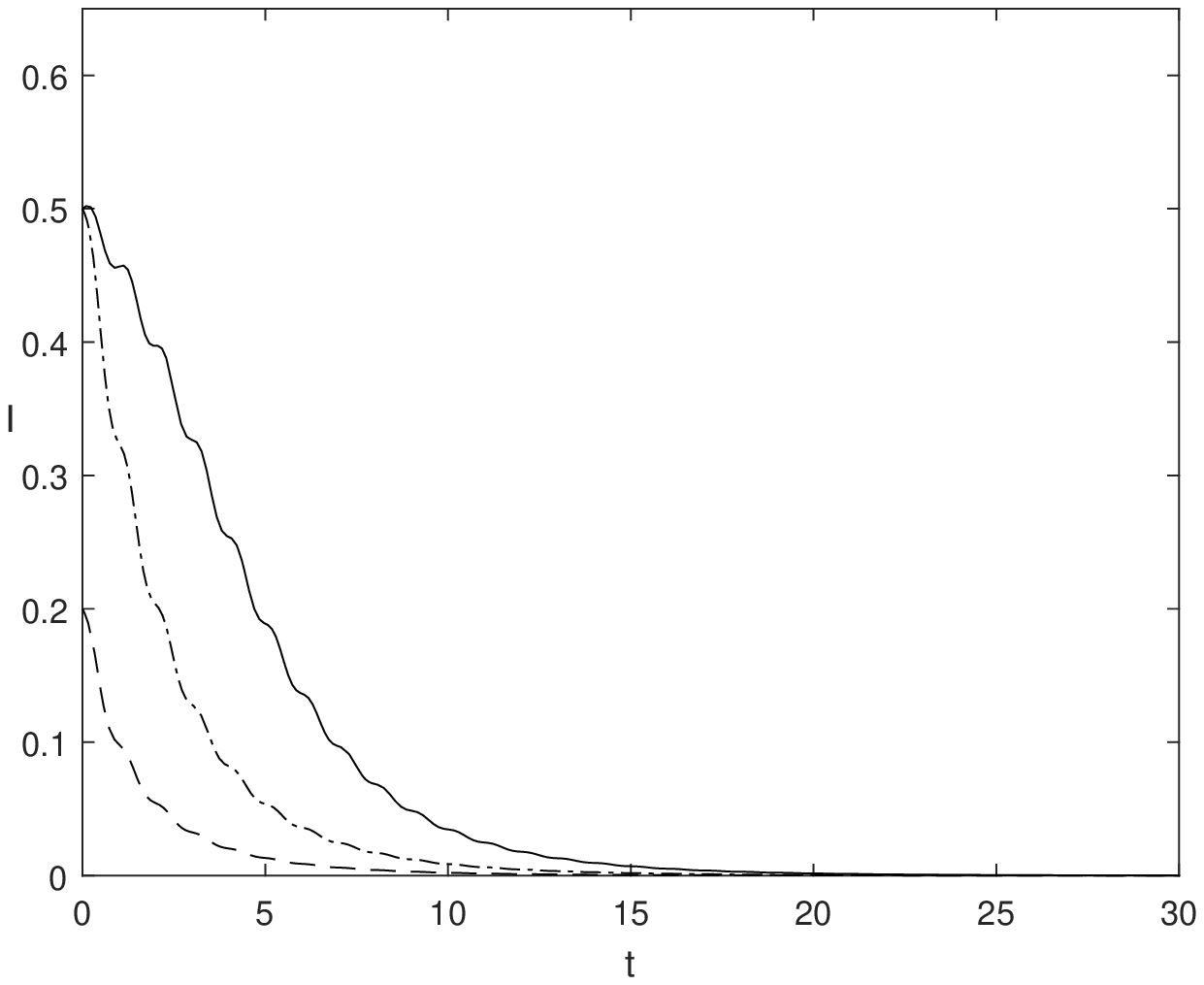}
  \end{minipage}
  \begin{minipage}[b]{.32\linewidth}
        \includegraphics[width=\linewidth]{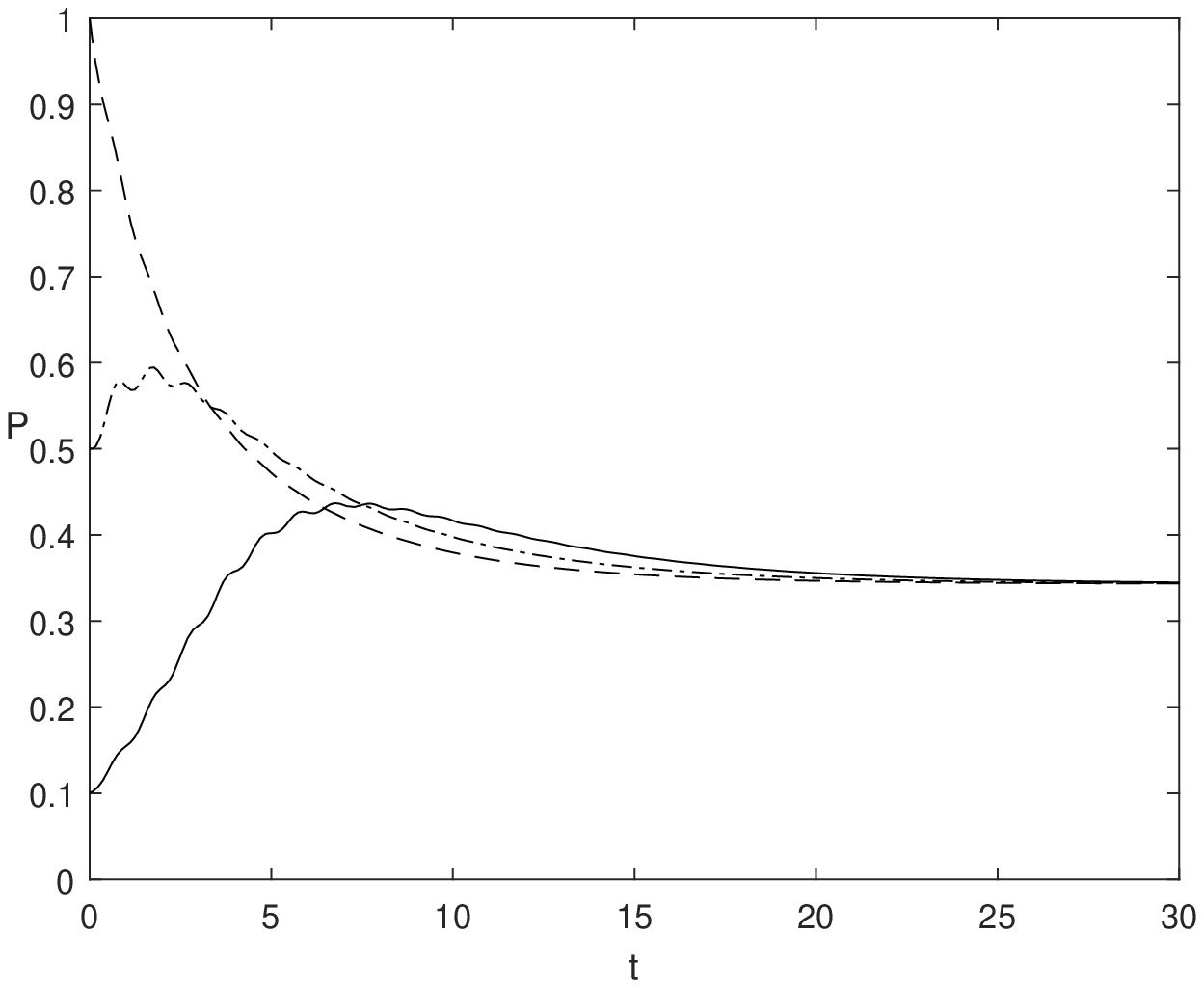}
  \end{minipage}
    \caption{Extinction: $\beta_0=0.1$.}
      \label{fig_exe2_extincao}
\end{figure}

\begin{figure}[H]
  \begin{minipage}[b]{.32\linewidth}
    \includegraphics[width=\linewidth]{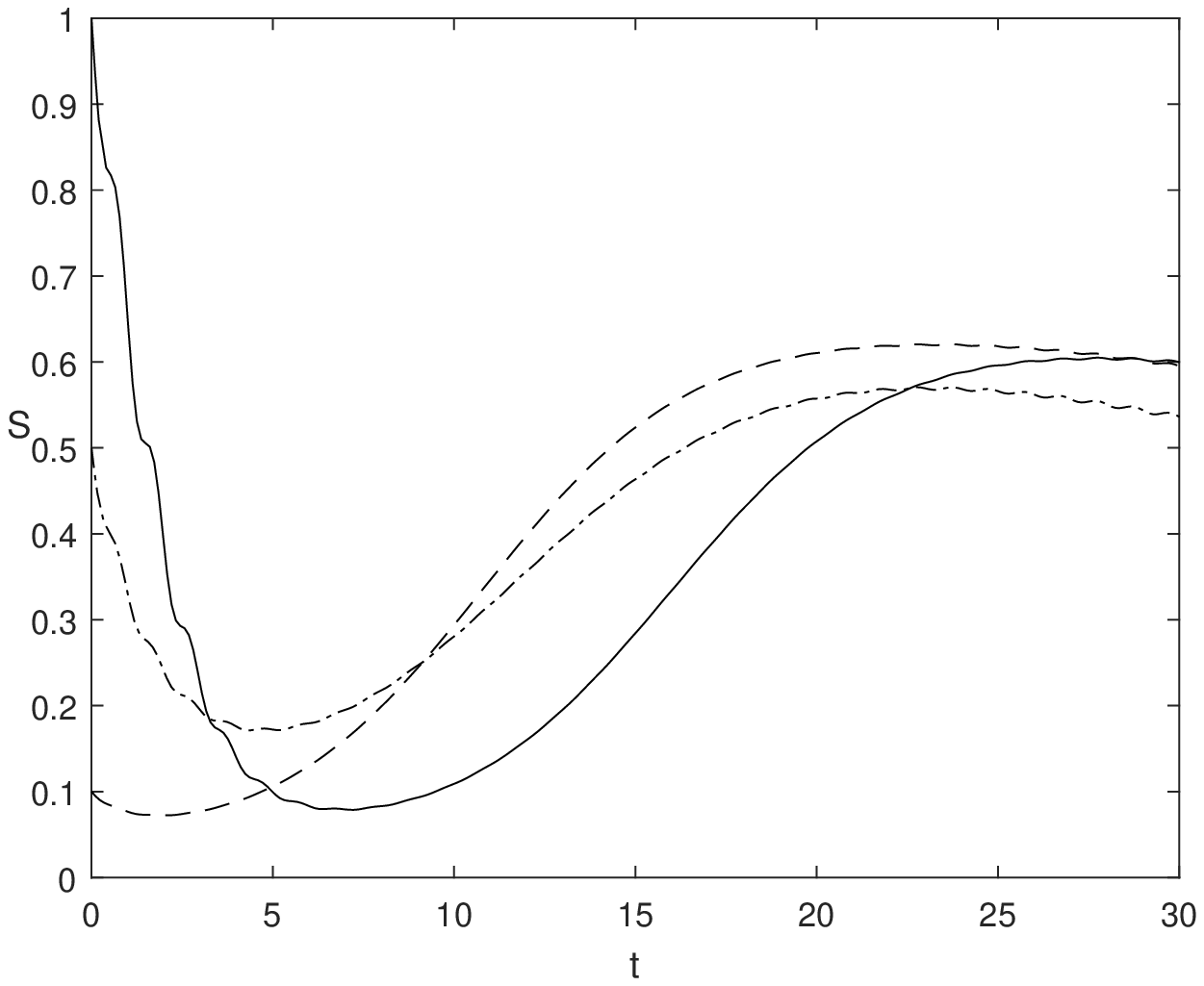}
  \end{minipage}
  \begin{minipage}[b]{.32\linewidth}
        \includegraphics[width=\linewidth]{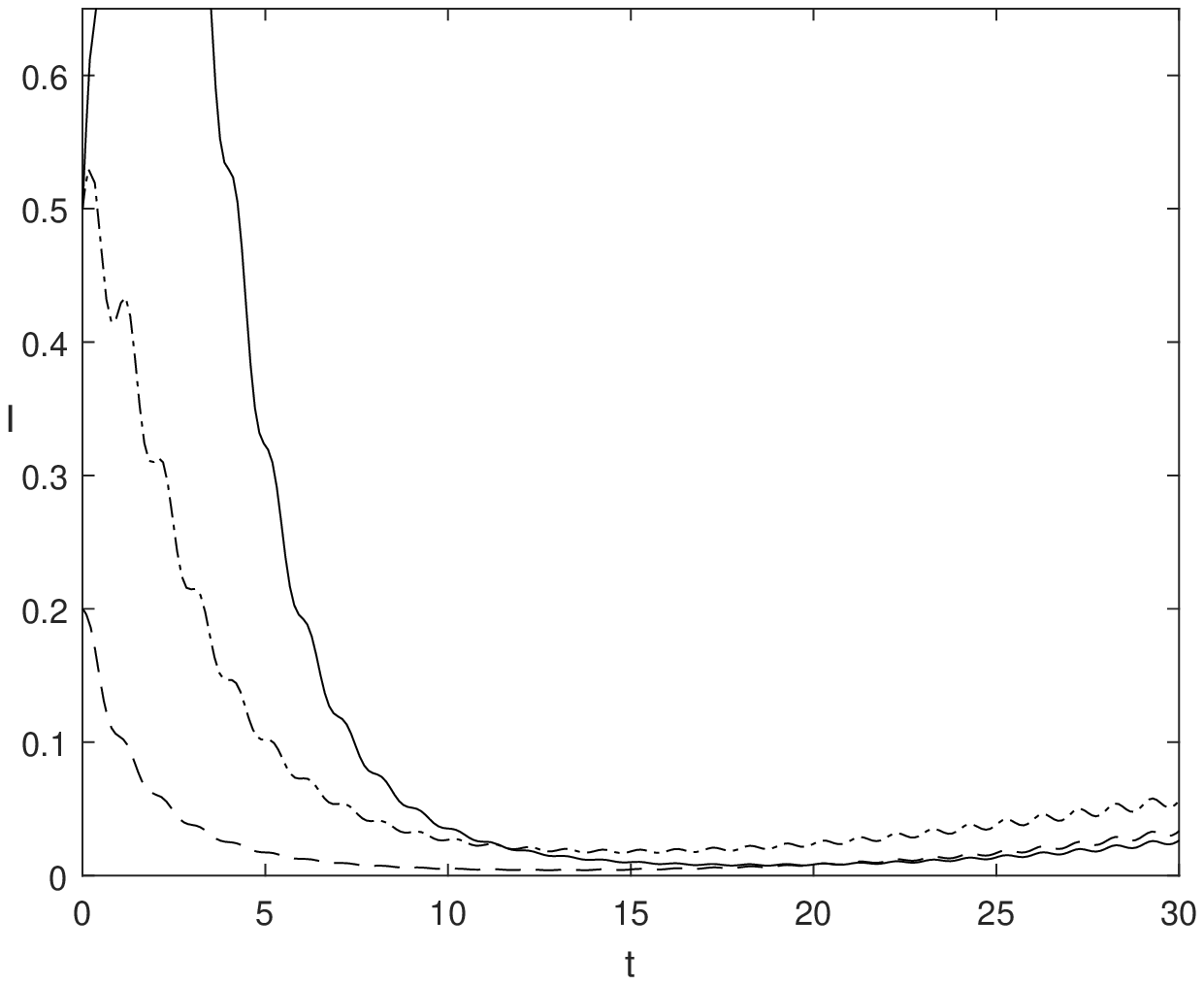}
  \end{minipage}
  \begin{minipage}[b]{.32\linewidth}
        \includegraphics[width=\linewidth]{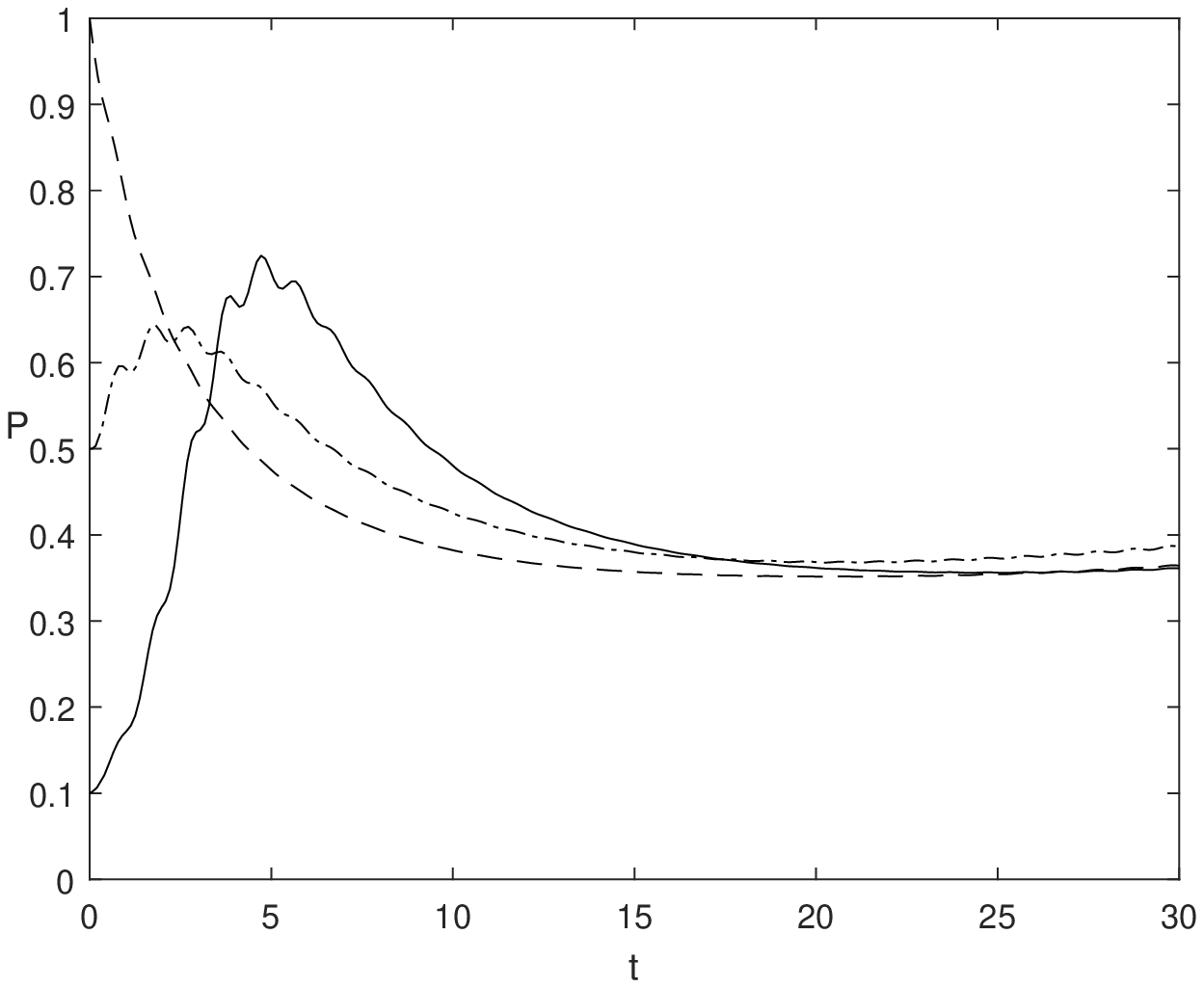}
  \end{minipage}
    \caption{Uniform strong persistence: $\beta_0=0.8$.}
      \label{fig_exe2_persistencia}
\end{figure}

\subsection{{Models with Gause-type uninfected subsystem}}
A model with Michaelis-Menten (or Holling-type I) functional response of predator to infected prey and a Holling-type II functional response of predator to susceptible prey is now considered. We consider the vital dynamics of uninfected prey as
	\[
	G(t,S)=k(t,S)S,
	\]
where $k\colon\R\times[0,+\infty[\to\R$ is continuous, $T$-periodic $(T > 0)$ in the $t$-variable, continuously differentiable in $S$ and satisfying the following conditions:
	\begin{equation}\label{eq:k(t,S)}
	\begin{cases}
	k(t,S)\,\text{ is bounded from above and }	k(t, 0) > 0;\\
	\text{ for every } t \text{ there exists } S_1(t) > 0 \text{ such that } k(t,S_1(t)) = 0;
	\\	
	\dfrac{\partial k}{\partial S}< 0 \text{ for every } S \geq 0.
	\end{cases}
	\end{equation}
	This type of vital dynamics was considered in \cite{Garrione-Rebelo-NARWA-2016} (see condition (k) in \S3 in that reference).
We notice that the general assumptions \eqref{eq:k(t,S)} are satisfied by a logistic growth of the prey population (with $S_1(t)$ equal to a constant). 
We also consider $a(t)=a>0$; $f(S,I,P)=S/(m+S+I)$, with $m>0$; $g(S,I,P)=P$; $h(t,P)=b-rP$, with $b, 
r>0$, $\gamma(t)=\gamma>0$. Moreover, we assume  $\beta(t), \eta(t), c(t)$ and $\theta(t)$ to be bounded, nonnegative and continuous real valued functions. Henceforth we are considering the system
\begin{equation}\label{eq:Periodic-second-scenario}
\begin{cases}
S'={k(t,S)S}- a \frac{SP}{m+S+I}-\beta(t)SI\\
I'=\beta(t)SI-\eta(t)PI-c(t)I\\
P'={(b-rP)P}+ \gamma a \frac{SP}{m+S+I}+\theta(t)\eta(t)PI
\end{cases},
\end{equation}
For model~\eqref{eq:Periodic-second-scenario}, conditions S\ref{cond-1}) and~S\ref{cond-3}) follow  immediate from hypothesis. The following lemma ensures condition~S\ref{cond-12}).

\begin{lemma}
There is a bounded region that contains the $\omega$-limit of all orbits of~\eqref{eq:Periodic-second-scenario}.
\end{lemma}
\begin{proof}
By the first equation in~\eqref{eq:Periodic-second-scenario} we have 
$
S' \le k(t,S)S$. Notice that from~\eqref{eq:k(t,S)} we have $k(t,S) > 0$ for $S < S_1(t)$ and $k(t,S) < 0$ for $S > S_1(t)$. Thus $[0,\max_{[0,T]}S_1(t)]$ is an attractor and so
\begin{equation*}
\limsup_{t\to+\infty}S(t)\le \max_{[0,T]}S_1(t):=S_1^u.
\end{equation*}
Additionally, writing $k^u=\sup_{(t,S)}k(t,S)$, we have
\begin{equation*}
\begin{split}
(S+I)'
& \leq k(t,S)S-c(t)I\\
& \leq k^uS-c^\ell I\\
& \leq (k^u + c^\ell)(S_1^u+\delta)-c^\ell(S+I),
\end{split}
\end{equation*}
for some $\delta>0$ and all sufficiently large $t$, which lead us to
\begin{equation*}
\begin{split}
(S+I)(t) \leq \frac{(k^u+c^\ell)(S_1^u+\delta)}{c^\ell}=:C.\\
\end{split}
\end{equation*}
Finally, since for sufficiently large $t$
\[
P'\leq (b-rP+\gamma a +\theta^u\eta^uC)P,
\]
we have
	\[
	\limsup_{t\to+\infty} P(t) \leq\frac1r({b+\gamma a +\theta^u\eta^uC})=:D.
	\]
Thus the region
$$\{(S,I,P) \in \R^3: 0 \le S+I \le C \ \text{and} \ 0 \le P \le D\}$$
contains the $\omega$-limit of any orbit. 
\end{proof}

Notice that by \cite[Lemma 3.1]{Garrione-Rebelo-NARWA-2016}, equations $s'=k(t,s)s$ and $y'=(b-ry)y$, with positive initial condition, have a unique $T$-periodic solution which is bounded and globally asymptotically stable, hence globally attractive, on $]0,+\infty[$. This ensures conditions S\ref{cond-9}) and S\ref{cond-10}). For condition~S\ref{cond-11}) we consider $G_{1,\eps}(t,x)=G_{2,\eps}(t,x)=k(t,x)x$, $h_{1,\eps}(t,z)=h_{2,\eps}(t,z)=(r-bz)z$, $v(\eps)=\eps$ and $\rho(t)=1$. Clearly, conditions S\ref{cond-61}.1) to S\ref{cond-63}.3) hold. The auxiliary subsystem~\eqref{eq:auxiliary-system-SP-2} becomes
\begin{equation}\label{eq:auxiliary-system-SP-2-exemplo-3}
\begin{cases}
x'=k(t,x)x\\
z'=(b-rz)z+\gamma a \frac{xz}{m+x}+\eps z.
\end{cases}
\end{equation}
By \cite[Lemma 3.1]{Garrione-Rebelo-NARWA-2016} there exists a $T$-periodic globally asymptotic stable solution $x_{2,\eps}^*(t)$ of the first equation of~\eqref{eq:auxiliary-system-SP-2-exemplo-3}. Considering this solution in the second equation we get
	\[
	z'=\left(b+\gamma a \frac{x_{2,\eps}^*(t)}{m+x_{2,\eps}^*(t)}+\eps-rz\right) z,
	\]
which, again from \cite[Lemma 3.1]{Garrione-Rebelo-NARWA-2016}, has a  $T$-periodic globally asymptotic stable solution $z_{2,\eps}^*(t)$.
Using this solutions $(x_{2,\eps}^*(t),z_{2,\eps}(t)^*)$, and writing
	\[\tilde k(t,x)=k(t,x)-  \frac{az_{2,\eps}^*(t)}{m+x}-\eps
	\]
the auxiliary subsystem~\eqref{eq:auxiliary-system-SP-1} becomes
\begin{equation}\label{eq:auxiliary-system-SP-1-exemplo-3}
\begin{cases}
x'=\tilde k(t,x)x\\
z'=(b-rz)z+\gamma a \frac{xz}{m+x+\eps}.
\end{cases}
\end{equation}
It is straightforward to verify that $\tilde k(t,S)$ is bounded from above. Moreover, if
\begin{equation}\label{eq:exemplo-3-k}
k(t,0)-\frac{az_{2,\eps}^*(t)}{m}-\eps>0
\end{equation}
for all sufficiently small $\eps>0$, then $\tilde k(t,0)>0$. Notice that we may find a bound for $z_{2,\eps}^*(t)$ independent of $\eps$.
Moreover, if 
\begin{equation}\label{eq:exemplo-3-k-partial}
\sup_{(t,x)}\left\{ \frac{\partial k}{\partial x}(t,x)+\frac{az_{2,\eps}^*(t)}{(m+x)^2}\right\}<0
\end{equation}
we have
\begin{equation*}
	\frac{\partial\tilde k}{\partial x}(t,x)=		\frac{\partial k}{\partial x}(t,x)+\frac{az_{2,\eps}^*(t)}{(m+x)^2}\leq\zeta<0.
	\end{equation*}
If~\eqref{eq:exemplo-3-k} and \eqref{eq:exemplo-3-k-partial} hold we have $\frac{\partial\tilde k}{\partial x}(t,x)\leq\zeta<0$ and  $\tilde k(t,0)>0$. Since $\lim_{x\to+\infty}\tilde k(t,x)=-\infty$ we conclude that for all $t$ there exists $X_1(t)$ such that $\tilde k(t,X_1(t))=0$.
Notice that having simultaneously~\eqref{eq:exemplo-3-k} and \eqref{eq:exemplo-3-k-partial} can be achieved, for instance, if $a$ is small enough.
Therefore, if~\eqref{eq:exemplo-3-k} and \eqref{eq:exemplo-3-k-partial} hold and $\eps>0$ is sufficiently small we are in conditions to apply \cite[Lemma 3.1]{Garrione-Rebelo-NARWA-2016} to conclude that
the first equation in~\eqref{eq:auxiliary-system-SP-1-exemplo-3} has a $T$-periodic solution $x_{1,\eps}^*(t)$ which is globally asymptotically stable. Using this solution in the second equation gives
	\[
	z'=(b-rz)z+\gamma a \frac{x_{1,\eps}^*(t)}{m+x_{1,\eps}^*(t)+{\eps}}z=\left(b+\gamma a \frac{x_{1,\eps}^*(t)}{m+x_{1,\eps}^*(t)+{\eps}}-rz\right)z,
	\]
which, proceeding as before, has a $T$-periodic solution $z_{1,\eps}^*(t)$ that is globally asymptotically.

We have showed that conditions~S\ref{eq:subsyst-1}.4) and~S\ref{cond-65}.5) also hold, so we may
 apply Theorem~\ref{teo:main-extinction} and Theorem~\ref{teo:main-persistence} to conclude that if $\cR^u<1$ then the infectives in model~\eqref{eq:Periodic-second-scenario} go to extinction, and if $\cR^\ell>1$ then the infectives are uniform strong persistent.

Let us point out that in the particular case $k(t,x)=\Lambda-\mu x$, for some constants $\Lambda,\mu>0$, we have the following globally asymptotically stable solution for~\eqref{eq:auxiliary-system-SP-2-exemplo-3}
\[
\begin{cases}
x_{2,\eps}^*=\Lambda/\mu\\
z_{2,\eps}^*=\left(b+\gamma a\frac{1}{m\mu+1}+\eps\right)/r
\end{cases}
\]
and in this situation conditions~\eqref{eq:exemplo-3-k} and \eqref{eq:exemplo-3-k-partial} holds if
	\[\frac{a}{mr}\left(b+\frac{\gamma a}{m\mu+1}\right)<\min\{\Lambda,m\mu\}
		\]
and $\eps$ is sufficiently small.

\,

To do some simulation, in this scenario we assumed that $G(t,S)=(0.7-0.6S)S$; $a=0.9$; $\beta(t)=\beta_0(1+0.7\cos(2\pi t))$; $\eta(t)=0.7(1+0.7\cos(\pi+2\pi t))$; $c(t)=0.1$; $b=0.2$; $r=0.6$; $m=2$; $\gamma=0.8$; $\theta(t)=0.9$. We obtain the model:
\begin{equation*}
\begin{cases}
S' =(0.7-0.6S)S-0.9 \, \frac{SP}{2+S+I}-\beta_0(1+0.7\cos(2\pi t))SI\\
I' = \beta_0(1+0.7\cos(2\pi t))SI-0.7(1+0.7\cos(\pi+2\pi t))PI-0.1I\\
P' = (0.2-0.6P)P+0.9 \, \frac{SP}{2+S+I}+0.6(1+0.7\cos(\pi+2\pi t))PI
\end{cases}.
\end{equation*}
When $\beta_0=0.1$ we obtain $\cR^u\approx -0.217<0$ and we conclude that we have extinction (figure~\ref{fig_exe3_extincao}). When $\beta_0=0.9$ we obtain  $\cR^\ell\approx 0.757>0$ and we conclude that the infectives are uniform strong persistent (figure~\ref{fig_exe3_persistencia}).

We considered the following initial conditions at $t=0$: $(S_0,I_0,P_0)=(1,0.5,0.1)$, $(S_0,I_0,P_0)=(0.1,0.2,1)$ and $(S_0,I_0,P_0)=(0.5,0.5,0.5)$. 

\begin{figure}[H]
  \begin{minipage}[b]{.32\linewidth}
    \includegraphics[width=\linewidth]{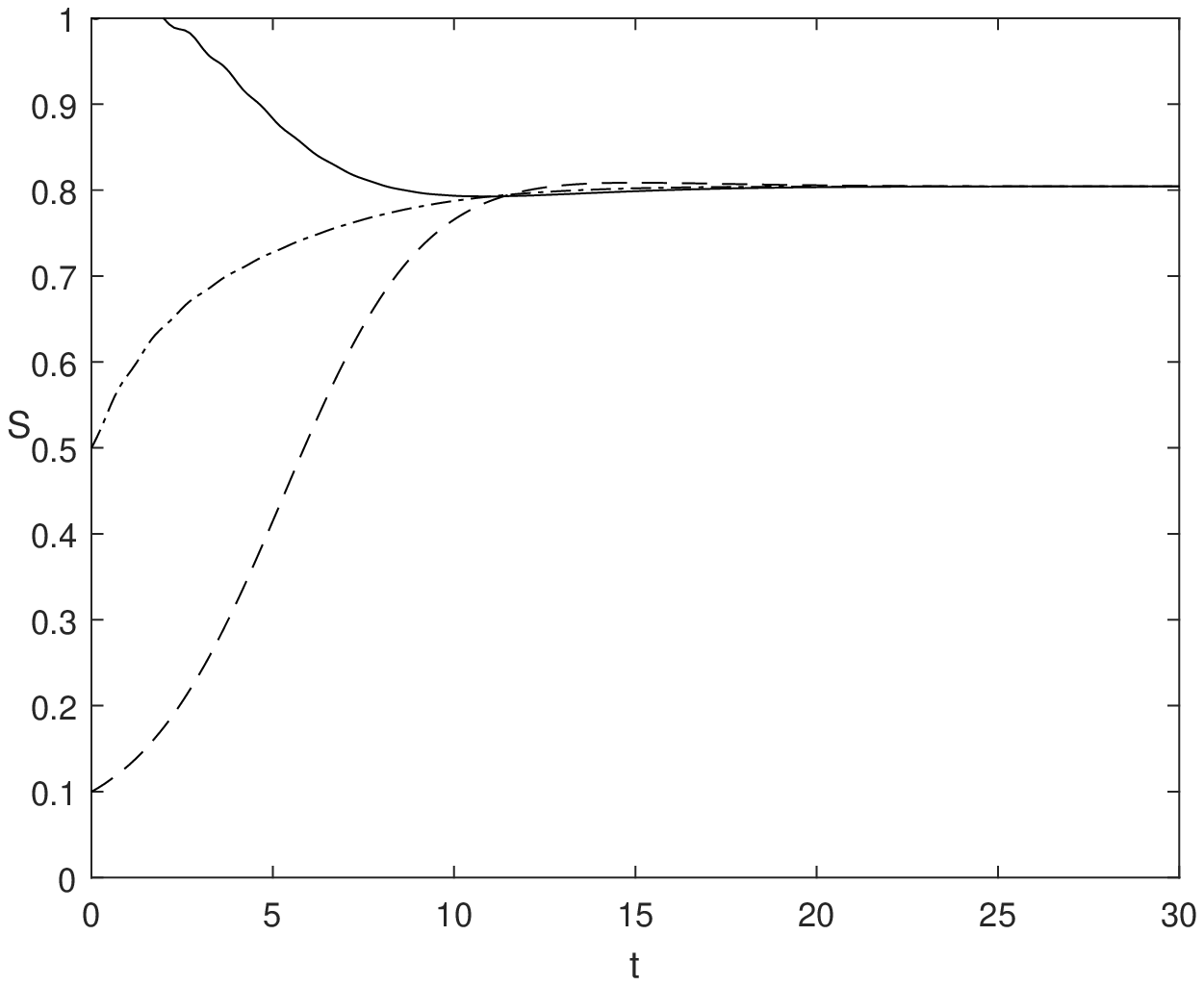}
  \end{minipage}
  \begin{minipage}[b]{.32\linewidth}
        \includegraphics[width=\linewidth]{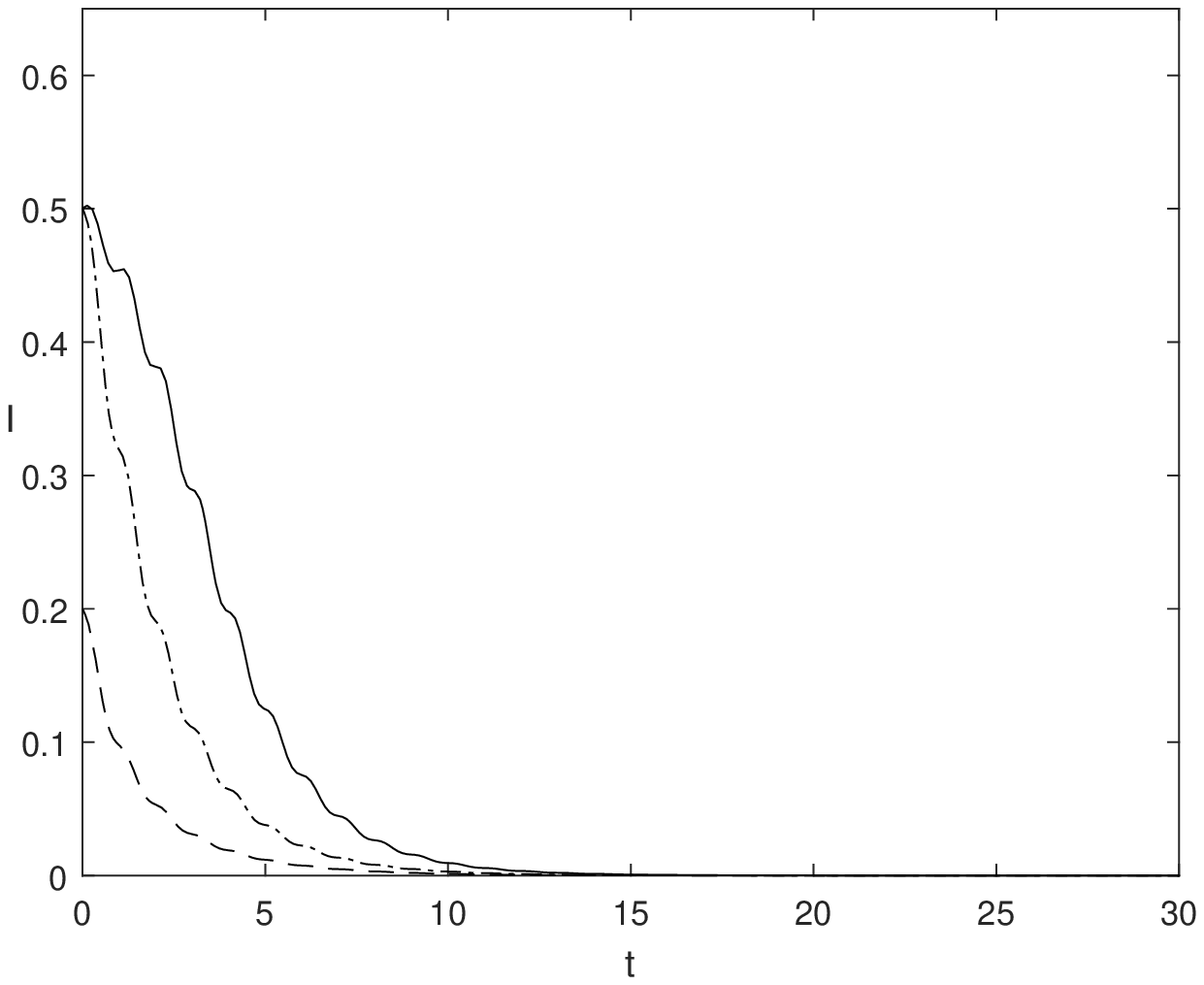}
  \end{minipage}
  \begin{minipage}[b]{.32\linewidth}
        \includegraphics[width=\linewidth]{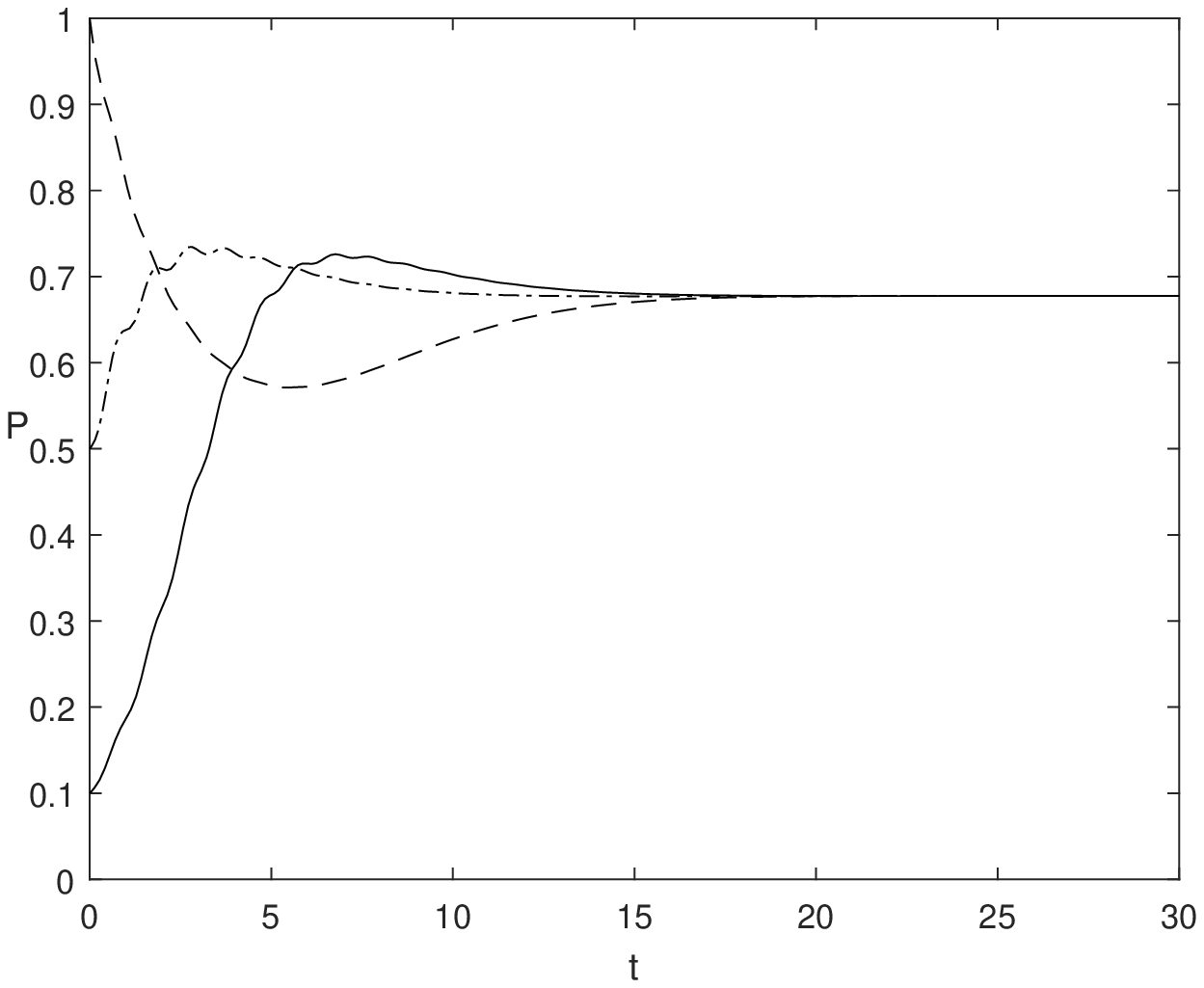}
  \end{minipage}
    \caption{Extinction: $\beta_0=0.1$.}
      \label{fig_exe3_extincao}
\end{figure}

\begin{figure}[H]
  \begin{minipage}[b]{.32\linewidth}
    \includegraphics[width=\linewidth]{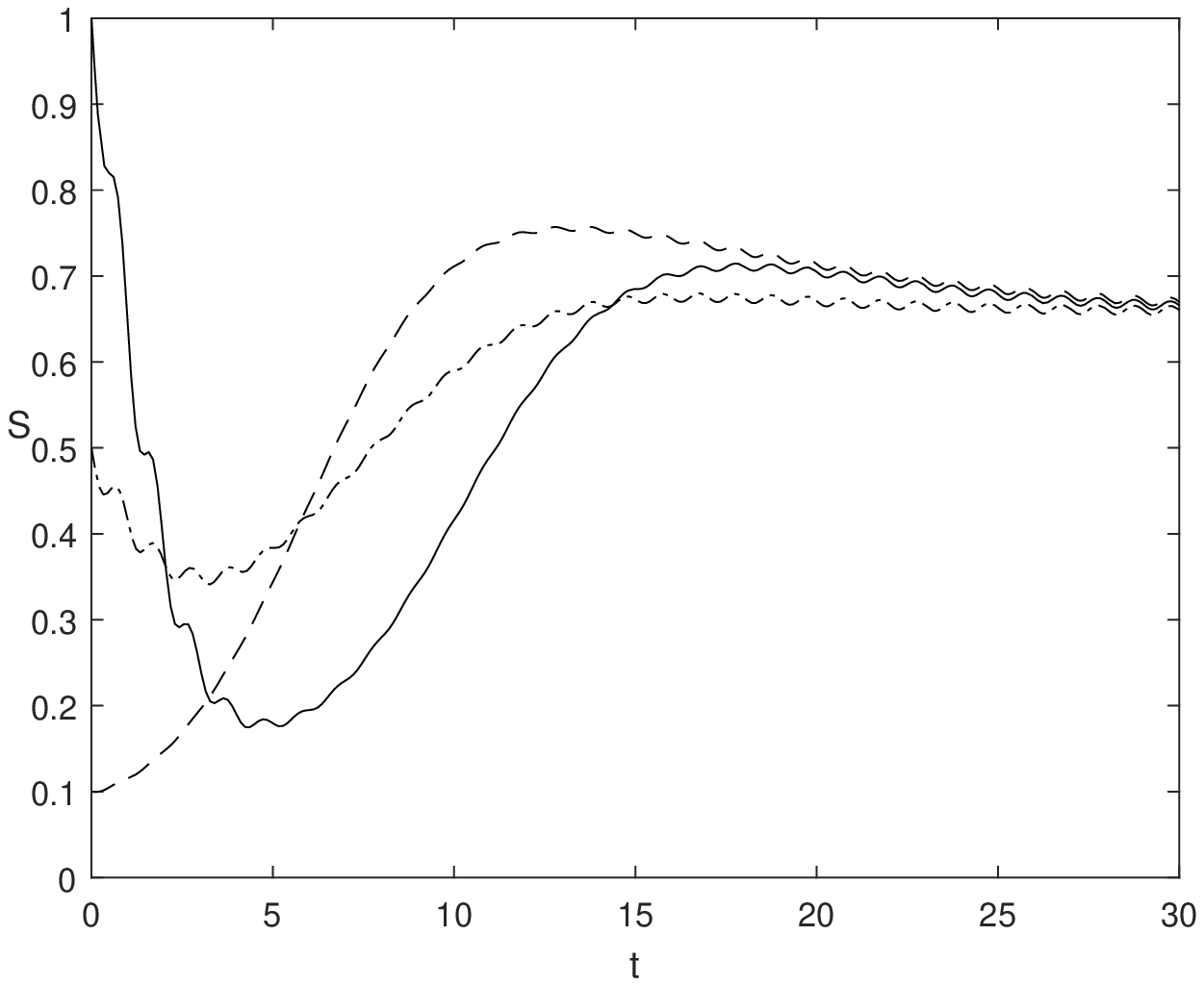}
  \end{minipage}
  \begin{minipage}[b]{.32\linewidth}
        \includegraphics[width=\linewidth]{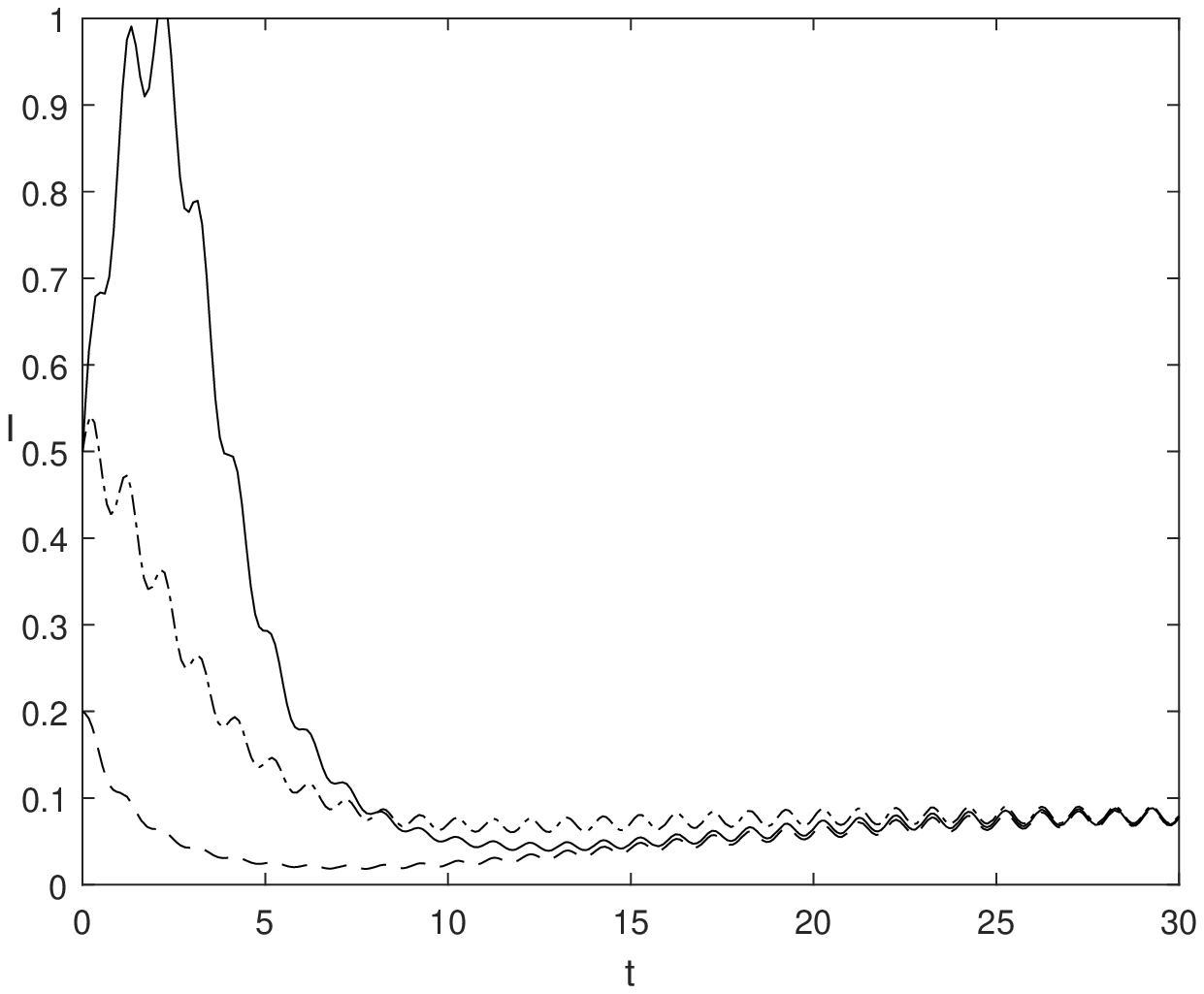}
  \end{minipage}
  \begin{minipage}[b]{.32\linewidth}
        \includegraphics[width=\linewidth]{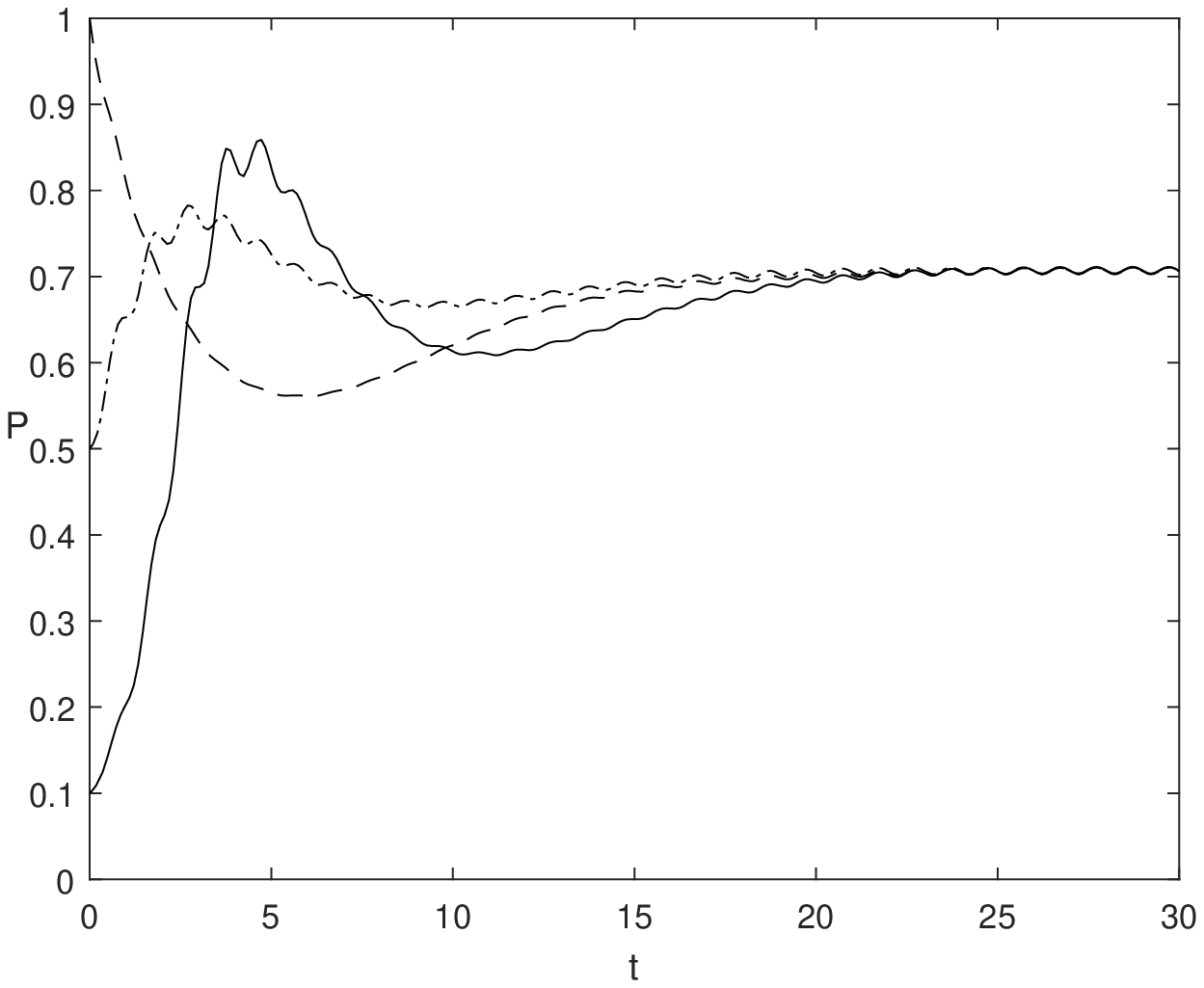}
  \end{minipage}
    \caption{Uniform strong persistence: $\beta_0=0.9$.}
      \label{fig_exe3_persistencia}
\end{figure}

\subsection{Models with ratio-dependent uninfected subsystem}

{The functional response of predator to prey in the uninfected subsystem in the next example is ratio-dependent. Ratio-dependent functional responses were considered to overcome some paradoxes identified in Gause-type systems (see~\cite{Hsu-Hwang-Kuang-MB-2001} and the references therein).}

We consider now the vital dynamics of uninfected prey as
	\[
	G(t,S)=\Lambda - \mu S,
	\]
for constants $\Lambda, \mu >0$, and $a(t)=a>0$, $f(S,I,P)=S/(Pm+S+I)$, with $m>0$, $g(S,I,P)=P$; $h(t,P)=b-rP$, with $b, 
r>0$, $\gamma(t)=\gamma>0$, and $\beta(t), \eta(t), c(t)$ and $\theta(t)$ to be  bounded, nonnegative and continuous real valued functions. With this assumptions system~\eqref{eq:principal} becomes
\begin{equation}\label{eq:example-4}
\begin{cases}
S'={\Lambda-\mu S} - a \frac{SP}{mP+S+I}-\beta(t)SI\\
I'=\beta(t)SI-\eta(t)PI-c(t)I\\
P'={(b-rP)P}+ \gamma a \frac{SP}{mP+S+I}+\theta(t)\eta(t)PI
\end{cases}.
\end{equation}
For model~\eqref{eq:example-4}, conditions S\ref{cond-1}) and~S\ref{cond-3}) follow  immediate from hypothesis and the particular forms of the functions $g$ and $h$. Condition~S\ref{cond-12}) is an immediate corollary of the following.

\begin{lemma}
There is a bounded region that contains the $\omega$-limit of all orbits of~\eqref{eq:Periodic-second-scenario}.
\end{lemma}
\begin{proof}
By the first equation in~\eqref{eq:example-4} we have 
$
S' \le \Lambda-\mu S$ which implies
\begin{equation*}
\limsup_{t\to+\infty}S(t)\le \frac\Lambda\mu.
\end{equation*}
We have
\begin{equation*}
\begin{split}
(S+I)'
& \leq \Lambda-\mu S-c(t)I\\
& \leq \Lambda -\min\{\mu,c^\ell\}(S+I)\\
\end{split}
\end{equation*}
 which implies
\begin{equation*}
\begin{split}
(S+I)(t) \leq \frac{\Lambda}{\min\{\mu,c^\ell\}}+\delta=:C.\\
\end{split}
\end{equation*}
for some $\delta>0$ and all sufficiently large $t$. Finally, since for large $t$,
\[
P'\leq (b-rP+\gamma a +\theta^u\eta^uC)P
\]
we have
	\[
	\limsup_{t\to+\infty} P(t) \leq\frac1r({b+\gamma a +\theta^u\eta^uC})=:D.
	\]
Thus the region
$$\{(S,I,P) \in \R^3: 0 \le S+I \le C \ \text{and} \ 0 \le P \le D\}$$
contains the $\omega$-limit of any orbit. 
\end{proof}

Similarly to the previous example, each solution of $s'=\Lambda-\mu s$ and of $y'=(b-ry)y$, with positive initial condition is bounded, bounded away from zero, and globally asymptotic stable (hence globally attractive)  on $]0,+\infty[$, which ensures conditions S\ref{cond-9}) and S\ref{cond-10}).

For condition~S\ref{cond-11}) we consider now $G_{1,\eps}(t,x)=G_{2,\eps}(t,x)=\Lambda-\mu x$, $h_{1,\eps}(t,z)=h_{2,\eps}(t,z)=(r-bz)z$, $v(\eps)=\eps$ and $\rho(t)=1$. As before, conditions S\ref{cond-61}.1) to S\ref{cond-63}.3) are straightforward. The auxiliary subsystem~\eqref{eq:auxiliary-system-SP-2} becomes
\begin{equation}\label{eq:auxiliary-system-SP-2-exemplo-4}
\begin{cases}
x'=\Lambda-\mu x\\
z'=(b-rz)z+\gamma a \frac{xz}{mz+x}+\eps z.
\end{cases}
\end{equation}
The first equation of~\eqref{eq:auxiliary-system-SP-2-exemplo-4} has solution $x_{2,\eps}^*(t)=\Lambda/\mu$. Considering this solution in the second equation we get
	\[
	z'=\left(b+\gamma a \frac{\Lambda/\mu}{mz+\Lambda/\mu}+\eps-rz\right) z,
	\]
which has solution
\[
z_{2,\eps}^*(t)^*=\frac{A+\sqrt{A^2+4\Lambda rm\mu B}}{2rm\mu}:=z_{2,\eps},
\]
where
\[A=-\Lambda r + b m \mu + \eps m \mu\quad\text{ and }\quad B= b  + \eps  + a \gamma.\]
Proceeding as in previous example, from \cite[Lemma 3.1]{Garrione-Rebelo-NARWA-2016} we conclude that $(x_{2,\eps}^*(t),z_{2,\eps}^*(t))$ is a globally asymptotically stable solution on $\{(x,y)\in\R^2\colon x, y>0\}$. Using this solution $(x_{2,\eps}^*(t),z_{2,\eps}^*(t))$, the auxiliary subsystem~\eqref{eq:auxiliary-system-SP-1} becomes
\begin{equation*}
\begin{cases}
x'=\Lambda-\mu x-a z_{2,\eps}^*-\eps x\\
z'=(b-rz)z+\gamma a \frac{xz}{zm+x+\eps}.
\end{cases}
\end{equation*}
The first equation can be written as
\[x'=\Lambda-a z_{2,\eps}^*-(\mu+\eps) x,\]
which, if
\begin{equation}\label{eq:ex4-cond1}
\Lambda>a z_{2,\eps}^*
\end{equation}
it has solution
\[
x_{1,\eps}^*(t)=\frac{\Lambda-a z_{2,\eps}^*}{\mu+\eps}=:x_{1,\eps}^*.
\]
Using this solution in the second equation gives
	\[
	z'=\left(b+\gamma a \frac{x_{1,\eps}^*}{zm+x_{1,\eps}^*+\eps}-rz\right)z,
	\]
which in turn has solution
\[
z_{1,\eps}^*(t)=\frac{\tilde A + \sqrt{\tilde A + 4 m r \tilde B}}{2 m r}
\]
where
 $\tilde A=b m - \eps r - r x_{1,\eps}^*$  and $\tilde B=b\eps + b x_{1,\eps}^* + a \gamma x_{1,\eps}^*$.

\[
z_{1,\eps}^*(t)=\frac{\tilde A+\sqrt{\tilde A+4rm\tilde B}}{2rm},
\] 
where
 $\tilde A=mb-rx_{1,\eps}^*$  and $\tilde B=(b+\gamma a)x_{1,\eps}^*$.
Once more, proceeding as in the previous example, one can check that  by \cite[Lemma 3.1]{Garrione-Rebelo-NARWA-2016} we conclude that $(x_{1,\eps}^*(t),z_{1,\eps}^*(t))$ is a globally asymptotically stable solution on $\{(x,y)\in\R^2\colon x, y>0\}$.

We have showed that if~\eqref{eq:ex4-cond1} holds, which happens, for instance, for sufficiently small $a$, then conditions~S\ref{eq:subsyst-1}.4) and~S\ref{cond-65}.5) also hold, and we may
 apply Theorem~\ref{teo:main-extinction} and Theorem~\ref{teo:main-persistence} to conclude that if $\cR^u<1$ then the infectives in model~\eqref{eq:example-4} go to extinction, and if $\cR^\ell>1$ then the infectives are uniform strong persistent.\\

\,

To do some simulation, in this scenario we assumed that $G(t,S)=3-0.6S$; $a=0.9$; $\beta(t)=\beta_0(1+0.7\cos(2\pi t))$; $\eta(t)=0.7(1+0.7\cos(\pi+2\pi t))$; $c(t)=0.1$; $b=0.2$; $r=0.6$; $m=2$; $\gamma=0.8$; $\theta(t)=0.9$. We obtain the model:
\begin{equation*}
\begin{cases}
S' =3-0.6S-0.9 \, \frac{SP}{2P+S+I}-\beta_0(1+0.7\cos(2\pi t))SI\\
I' = \beta_0(1+0.7\cos(2\pi t))SI-0.7(1+0.7\cos(\pi+2\pi t))PI-0.1I\\
P' = (0.8-0.6P)P+0.9 \, \frac{SP}{2P+S+I}+0.6(1+0.7\cos(\pi+2\pi t))PI
\end{cases}.
\end{equation*}
When $\beta_0=0.01$ we obtain $\cR^u\approx -0.283<0$ and we conclude that we have extinction (figure~\ref{fig_exe4_extincao}). When $\beta_0=0.3$ we obtain  $\cR^\ell\approx 0.073>0$ and we conclude that the infectives are uniform strong persistent (figure~\ref{fig_exe4_persistencia}).

We considered the following initial conditions at $t=0$: $(S_0,I_0,P_0)=(1,0.5,0.1)$, $(S_0,I_0,P_0)=(0.1,0.2,1)$ and $(S_0,I_0,P_0)=(0.5,0.5,0.5)$. 

\begin{figure}[H]
  \begin{minipage}[b]{.32\linewidth}
    \includegraphics[width=\linewidth]{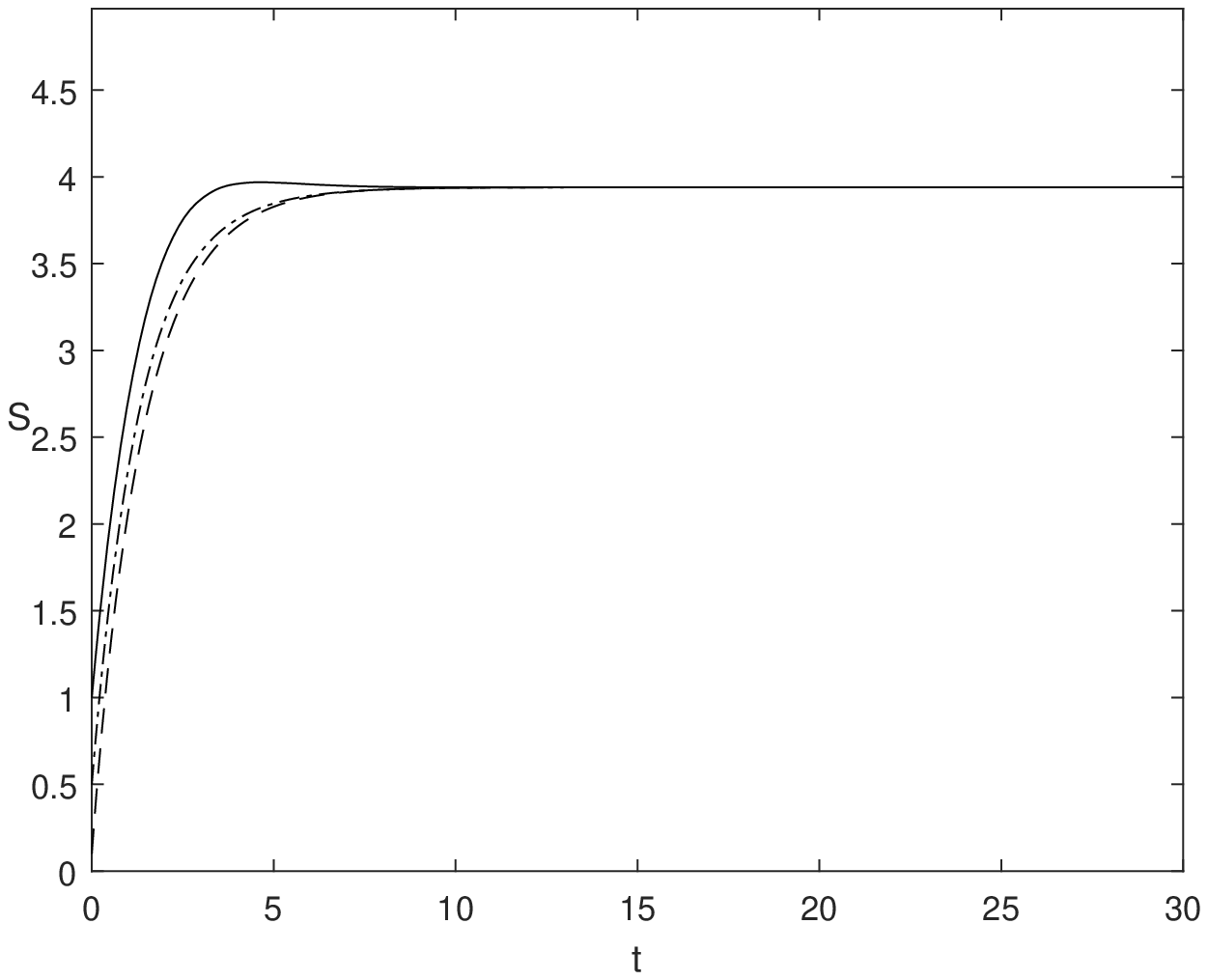}
  \end{minipage}
  \begin{minipage}[b]{.32\linewidth}
        \includegraphics[width=\linewidth]{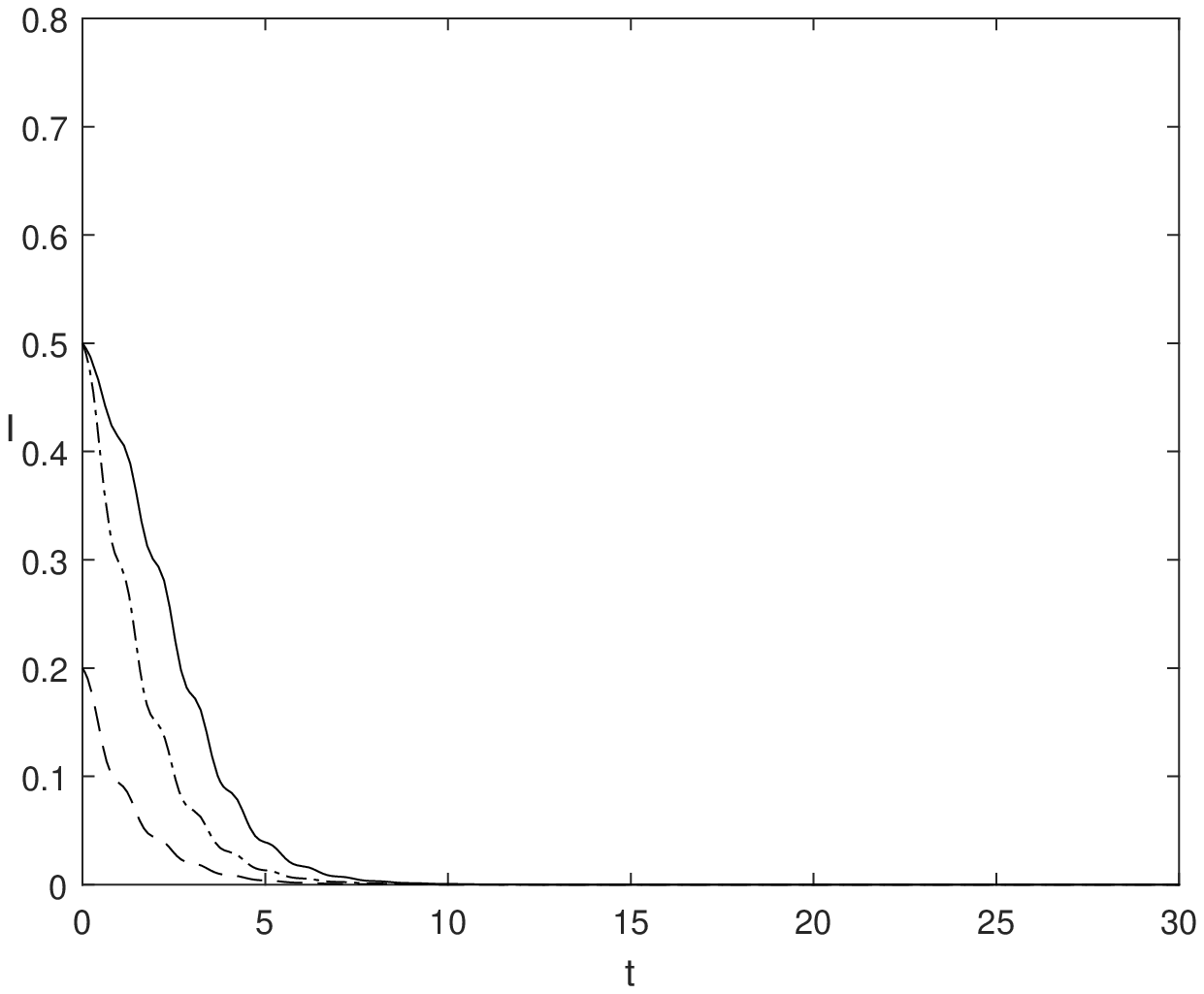}
  \end{minipage}
  \begin{minipage}[b]{.32\linewidth}
        \includegraphics[width=\linewidth]{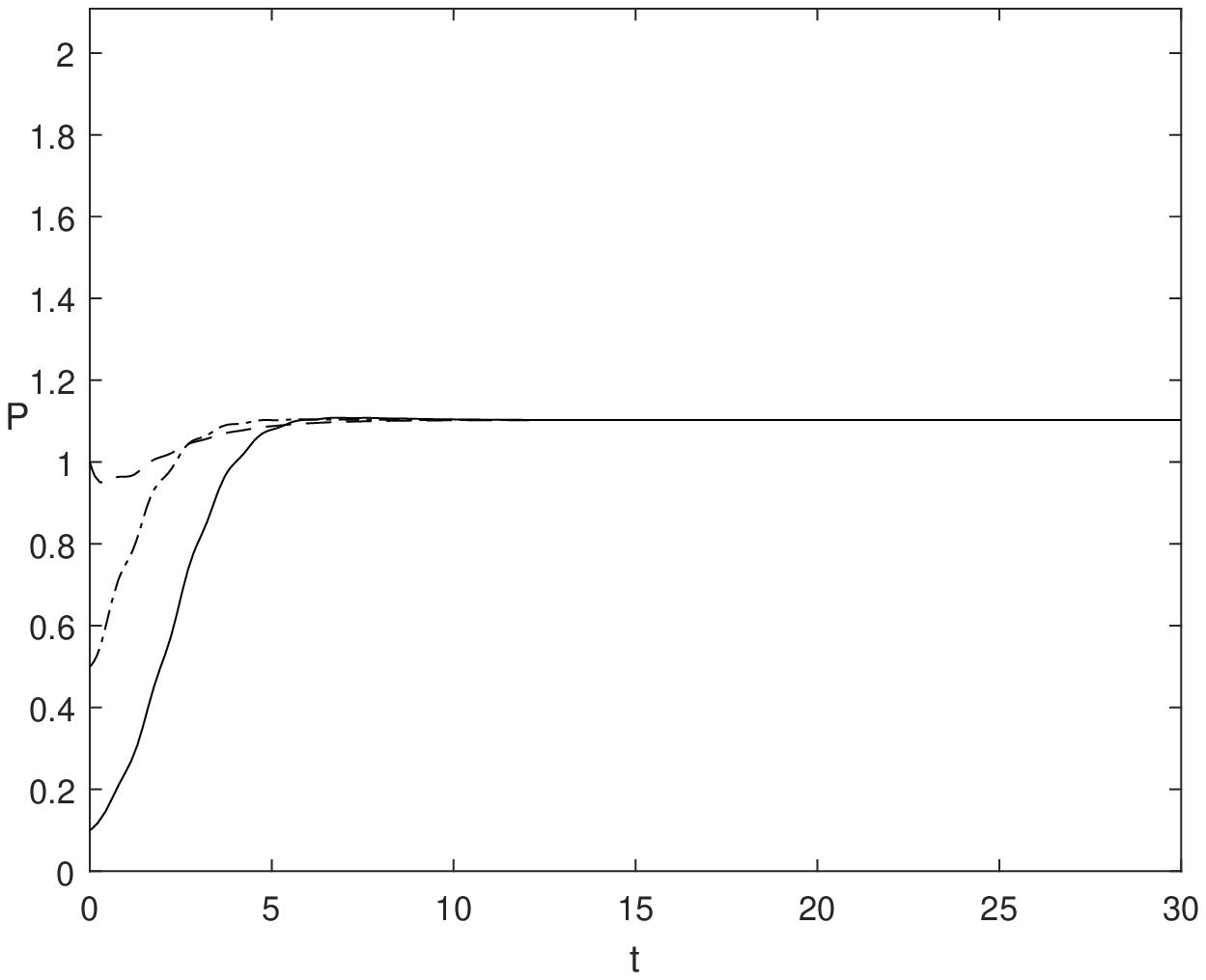}
  \end{minipage}
    \caption{Extinction: $\beta_0=0.01$.}
      \label{fig_exe4_extincao}
\end{figure}

\begin{figure}[H]
  \begin{minipage}[b]{.32\linewidth}
    \includegraphics[width=\linewidth]{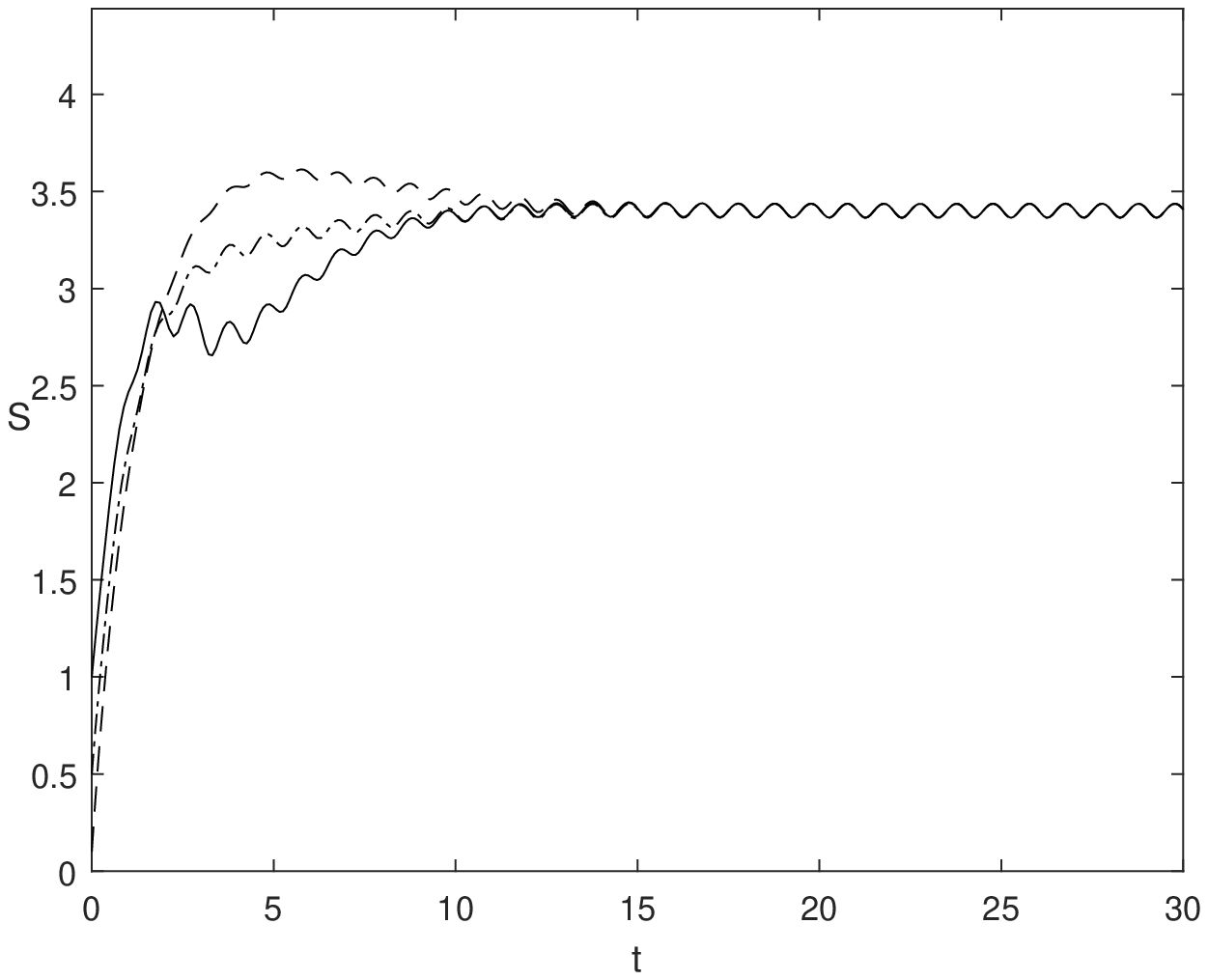}
  \end{minipage}
  \begin{minipage}[b]{.32\linewidth}
        \includegraphics[width=\linewidth]{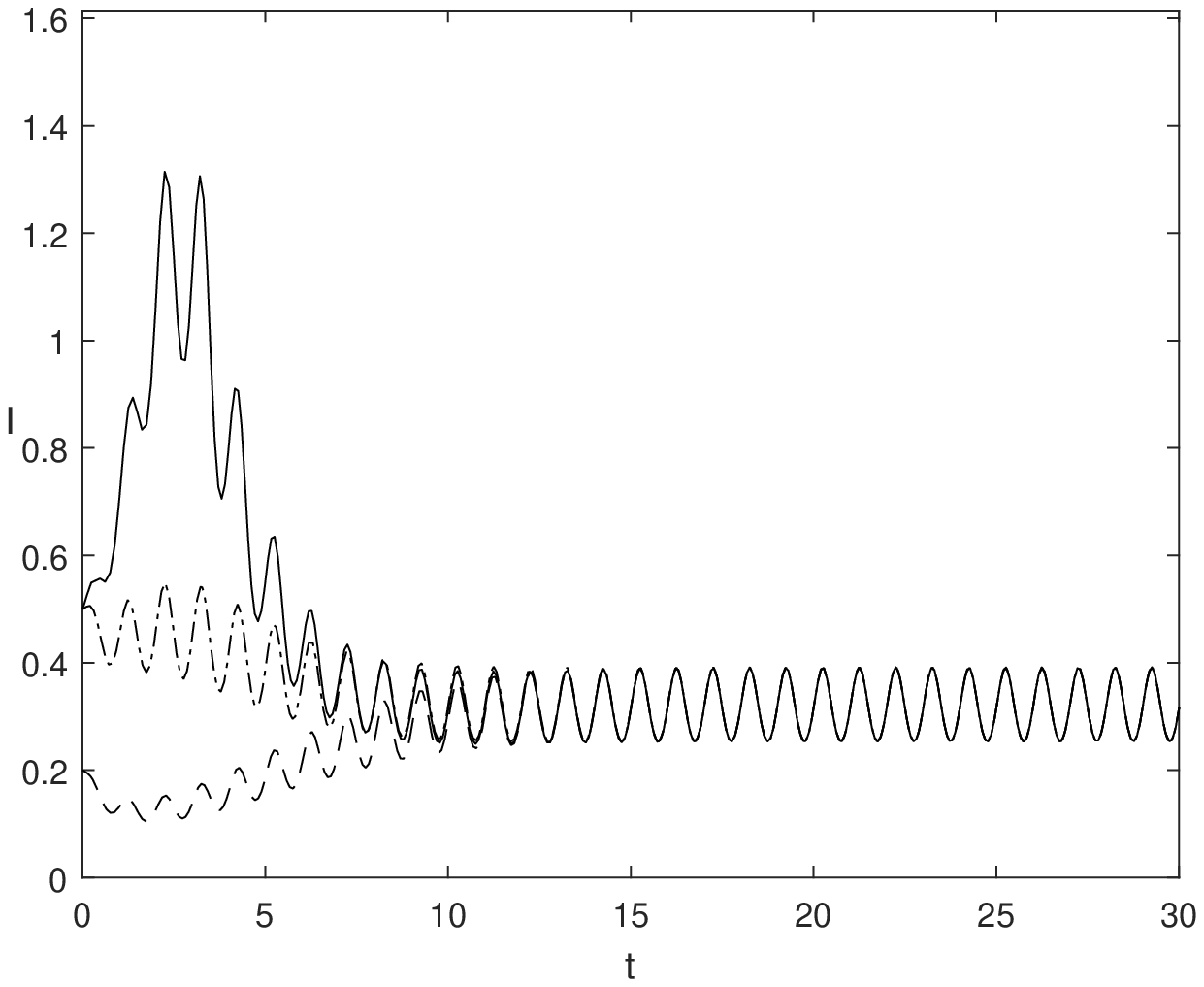}
  \end{minipage}
  \begin{minipage}[b]{.32\linewidth}
        \includegraphics[width=\linewidth]{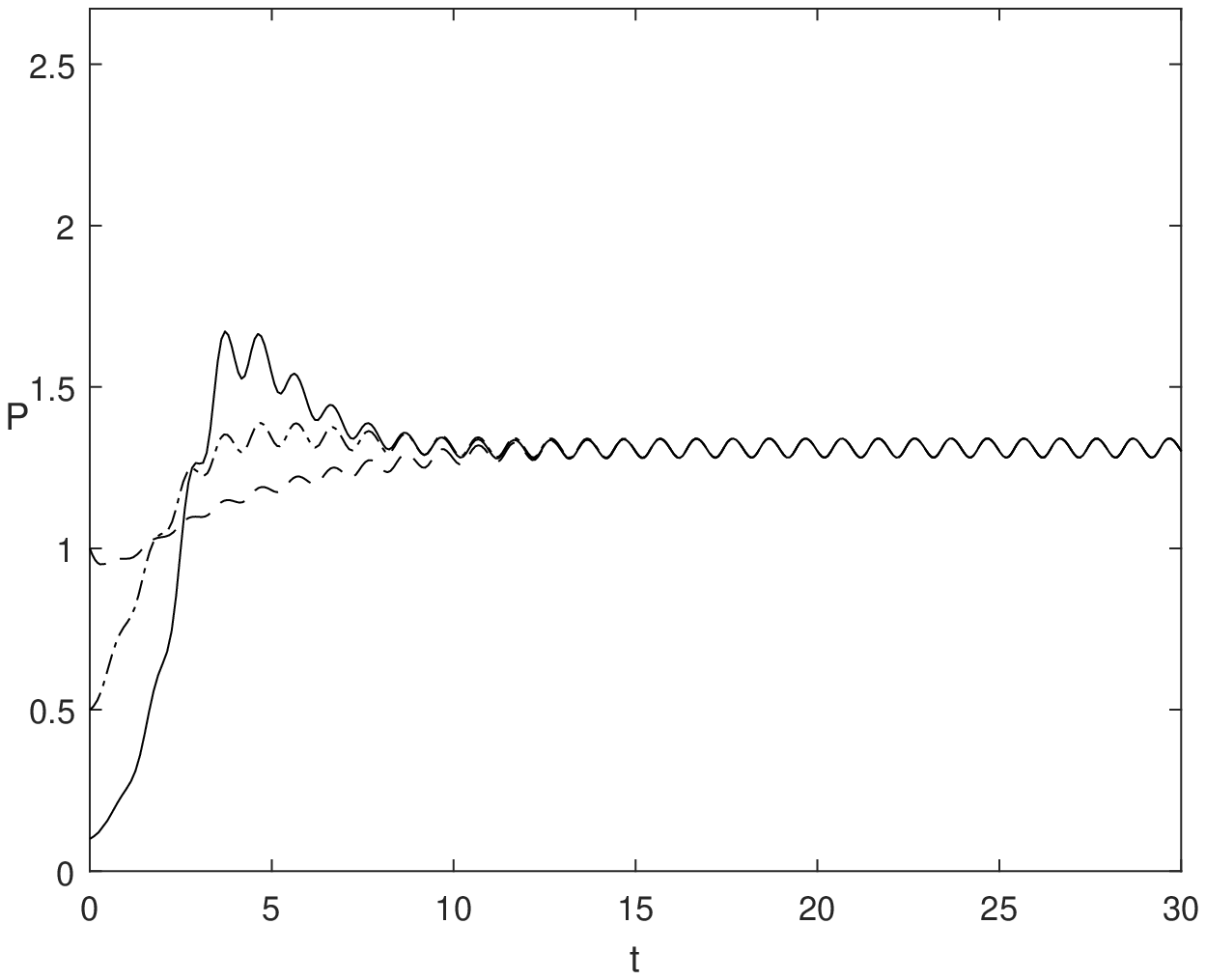}
  \end{minipage}
    \caption{Uniform strong persistence: $\beta_0=0.3$.}
      \label{fig_exe4_persistencia}
\end{figure}

\bibliographystyle{elsart-num-sort}

\end{document}